\documentclass{gtart}
\usepackage{t1enc}
\usepackage[latin1]{inputenc}
\usepackage[english]{babel}
\usepackage{amssymb}
\usepackage{amsmath}
\usepackage{graphics}
\usepackage{hyperref}
 \usepackage{caption} \captionsetup[table]{skip=10pt}
\vfuzz2pt 
\hfuzz2pt 
\usepackage{amsthm, amssymb}
\usepackage{amsfonts}
\usepackage{epsfig,multicol}   

\usepackage{mathrsfs}

\theoremstyle{theorem}
\newtheorem{theorem}{Theorem}[section]
\newtheorem{algorithm}[theorem]{Algorithm}

\newtheorem{lemma}[theorem]{Lemma}
\newtheorem{corollary}[theorem]{Corollary}
\theoremstyle{definition}
\newtheorem{definition}[theorem]{Definition}

\newtheorem{remark}[theorem]{Remark}

\newtheorem{formula}[theorem]{}

\newtheorem{conjecture}[theorem]{Conjecture}
\pagestyle{plain}

\newcommand{\arxiv}[1]{\href{http://arxiv.org/abs/#1}{\tt arXiv:\nolinkurl{#1}}}

\newcommand{\googlebooks}[1]{(preview at \href{http://books.google.com/books?id=#1}{google books})}


\def\<{\langle}
\def\>{\rangle}

\begin{document}

\def\hpic #1 #2 {\mbox{$\begin{array}[c]{l} \epsfig{file=#1,height=#2}
\end{array}$}}
 
\def\vpic #1 #2 {\mbox{$\begin{array}[c]{l} \epsfig{file=#1,width=#2}
\end{array}$}}

\title{The Brauer-Picard group of the Asaeda-Haagerup fusion categories}
\author{Pinhas~Grossman}
\address{
}%
\email{p.grossman@unsw.edu.au}

\author{Noah~Snyder}
\address{
}%
\email{nsnyder@math.columbia.edu}

\address{%
\rm URLs:\stdspace \tt \url{http://math.columbia.edu/~nsnyder} \\ \url{http://w3.impa.br/~pinhas}}

\primaryclass{46L37} \secondaryclass{18D10} \keywords{
  Fusion categories, subfactors, quantum subgroups
}

\begin{abstract}
We prove that the Brauer-Picard group of Morita autoequivalences of each of the three fusion categories which arise as an even part of the Asaeda-Haagerup subfactor or of its index 2 extension is the Klein four-group. We describe the 36 bimodule categories which occur in the full subgroupoid of the Brauer-Picard groupoid on these three fusion categories. We also classify all irreducible subfactors both of whose even parts are among these categories, of which there are 111 up to isomorphism of the planar algebra (76 up to duality).  Although we identify the entire Brauer-Picard group, there may be additional fusion categories in the groupoid.  We prove a partial classification of possible additional fusion categories Morita equivalent to the Asaeda-Haagerup fusion categories and make some conjectures about their existence.  This is the submitted version of arXiv:1202.4396.
 
\end{abstract}

\maketitle

\section{Introduction}
If $N \subset M$ is a finite index subfactor, then the fundamental bimodule $ {}_N M_M $ generates tensor categories of $N-N$ and $M-M$ bimodules, called the even parts of the subfactor, as well as a Morita equivalence (i.e. an invertible bimodule category) of $N-M$ bimodules between them. It is natural to ask: what is the full Morita equivalence class of the even parts, and what are all the invertible bimodule categories between tensor categories in this class? 

In general, not much can be said about this question. But in the case that $N \subset M$ has finite depth, the even parts are fusion categories, and a result known as Ocneanu rigidity says that the ``maximal atlas'' \cite{MR1865095} of Morita equivalences is a finite groupoid, called the Brauer-Picard groupoid \cite{MR2677836}. (In fact the Brauer-Picard groupoid is a $3$-groupoid, but in this paper we only consider the $1$-truncation.)

Intimately related to the Brauer-Picard groupoid of a fusion category $\mathscr{C}$ is the family of module categories over $\mathscr{C}$. Certainly any bimodule category is in particular a module category over each of its left and right arguments, but also to any simple module category $ {}_\mathscr{C} \mathscr{K}$, there is associated a dual category of module endofunctors $\mathscr{D} $ such that $ {}_\mathscr{C} \mathscr{K} {}_\mathscr{D}$ is a Morita equivalence.   

For the fusion category $Rep_G$ of finite-dimensional complex representations of a finite group $G$ the situation is well understood. Let $ H \subseteq \tilde{H}$ be a central extension by $\mathbb{C}^* $ of a subgroup $H \subseteq G$. Then $Rep_{\tilde{H}} $ is a module category over $Rep_G $, and every simple module category over $Rep_G$  is of this form \cite{MR1976459}. For this reason module categories are sometimes called quantum subgroups.

Another class of fusion categories for which the representation theory is known is the family coming from quantum $\mathfrak{su}_2$ at roots of unity, which is parametrized by the Dynkin diagrams of type $A_n$. Here the module categories corresponding to Dynkin diagrams of type $A-D-E$ which have the same Coxeter number as $A_n$ (see \cite{MR996454, OcnLect, MR1777347} for this result in subfactor language, and \cite{MR1936496, MR1976459, MR2046203} for the translation of these results into the language of fusion categories and module categories); the corresponding Morita equivalences are implemented by the Goodman-de la Harpe-Jones subfactors \cite{MR999799}. Ocneanu has also announced the classification of quantum subgroups of the fusion categories coming from quantum $\mathfrak{su}_3$ and $\mathfrak{su}_4$ \cite{MR1907188} (see \cite{MR2545609,MR2553429} for details in the $\mathfrak{su}_3$ case). However, besides for these families there are very few examples for which a complete classification of quantum subgroups is known.

Motivated by the classification of small-index subfactors, Asaeda and Haagerup constructed two subfactors, one with index $\frac{5+\sqrt{13}}{2} $, known as the Haagerup subfactor, and one with index $\frac{5+\sqrt{17}}{2} $, known as the Asaeda-Haagerup subfactor \cite{MR1686551}. They called these subfactors ``exotic'' as they did not appear to be related to any previously known mathematical objects. We call the even parts of these subfactors, as well as any Morita equivalent fusion categories, the Haagerup and Asaeda-Haagerup fusion categories, respectively. A basic question is to determine the representation theory (i.e. the quantum subgroups and the Brauer-Picard groupoid) of these fusion categories.

In \cite{1102.2631} we considered the Haagerup fusion categories, and in the present paper we consider the Asaeda-Haagerup fusion categories.  The Asaeda-Haagerup fusion categories turn out to have a much richer structure, and we have developed better techniques in order to understand this example.  We now briefly describe the results and compare the techniques needed in the two cases.

In the Haagerup case \cite{1102.2631}, we began with the Haagerup subfactor which gave a Morita equivalence between a fusion category $\mathscr{H}_1$ with commutative fusion rules, and a fusion category $\mathscr{H}_2 $ with noncommutative fusion rules. The category $\mathscr{H}_2$ contains an order $3$ invertible object $\alpha$. It turned out that the category of $(1+\alpha+\alpha^2)-(1+\alpha+\alpha^2) $ bimodules in $\mathscr{H}_2$ is a new category (although it has the same fusion rules as $\mathscr{H}_2$), which we called $\mathscr{H}_3$. This ended up being everything: there is a unique Morita equivalence between each not-necessarily-distinct pair of these three categories, and there are no other Morita equivalent categories. In particular, the Brauer-Picard group of Morita autoequivalences of each of the three categories is trivial. 

In the Asaeda-Haagerup case, we begin with the Asaeda-Haagerup subfactor, which gives a Morita equivalence between a fusion category with commutative fusion rules, which we call $\mathscr{AH}_1 $ and a fusion category with noncommutative fusion rules, which we call $\mathscr{AH}_2 $. This time the category $\mathscr{AH}_2$ contains an order $2$ invertible object $\alpha$, and again the category of $(1+\alpha)-(1+\alpha) $ bimodules in $\mathscr{AH}_2$ is a new category (although this time all three categories have different fusion rules), which we call $\mathscr{AH}_3$. However unlike in the Haagerup case, there are multiple invertible bimodule categories between each pair of these fusion categories.

The key to finding the rest of the groupoid is the existence of several additional small-index subfactors.  
Motivated by the study of quadrilaterals of factors, it was shown in \cite{MR2418197} and \cite{MR2812458} that there are
subfactors with indices one larger than the Haagerup and Asaeda-Haagerup subfactors, i.e. $\frac{7+\sqrt{13}}{2} $  and 
$\frac{7+\sqrt{17}}{2} $; we call these subfactors $H+1 $ and $AH+1 $. In the Haagerup case $H+1$ just implements the trivial 
autoequivalence of $\mathscr{H}_1$, so it does not give any new information about the groupoid (and indeed, in \cite{1102.2631} 
we gave a ``trivial'' construction of $H+1$ exploiting this fact). However in the Asaeda-Haagerup case, $AH+1$ gives a second Morita 
equivalence between $\mathscr{AH}_1 $ and $\mathscr{AH}_3 $, which immediately implies that the group of autoequivalences of each of 
the Asaeda-Haagerup fusion categories is non-trivial.

Moreover, it was conjectured in \cite{MR2812458} that the ``plus one'' construction can be iterated once more in the Asaeda-Haagerup case to find a subfactor $AH+2 $ with index $\frac{9+\sqrt{17}}{2} $. We verify the existence of $AH+2$ and show that it gives a new autoequivalence of $\mathscr{AH}_1 $ which is not in the groupoid generated by $AH $ and $AH+1$. Finally, we find the full group of Morita autoequivalences of each of the three Asaeda-Haagerup fusion categories:

\begin{theorem}
The Brauer-Picard group of Morita autoequivalences of each of the Asaeda-Haagerup fusion categories is $\mathbb{Z}/2\mathbb{Z} \times \mathbb{Z}/2\mathbb{Z}$.

\end{theorem}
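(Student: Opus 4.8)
The plan is to reduce to a single fusion category and then pin the group down from both sides, using the identification of the Brauer--Picard group with the group of braided tensor autoequivalences of the Drinfeld center. Since $\mathscr{AH}_1$, $\mathscr{AH}_2$, $\mathscr{AH}_3$ are pairwise Morita equivalent, conjugation by an invertible bimodule category gives group isomorphisms between the three vertex groups of the Brauer--Picard groupoid, so it is enough to treat $\mathscr{AH}_1$; moreover the three categories share a common Drinfeld center $\mathcal{Z}$, a modular tensor category, and $\mathrm{BrPic}(\mathscr{AH}_1)\cong\mathrm{Aut}^{\mathrm{br}}(\mathcal{Z})$. I would therefore aim, independently, to show $|\mathrm{Aut}^{\mathrm{br}}(\mathcal{Z})|\le 4$ and to produce two distinct involutions in $\mathrm{BrPic}(\mathscr{AH}_1)$; a group of order $4$ containing two distinct involutions must be $\mathbb{Z}/2\mathbb{Z}\oplus\mathbb{Z}/2\mathbb{Z}$.

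For the lower bound I would extract the involutions from the extra subfactors. The subfactor $AH$ gives an invertible $\mathscr{AH}_1$--$\mathscr{AH}_2$ bimodule category; composing it with the tautological invertible $\mathscr{AH}_2$--$\mathscr{AH}_3$ bimodule (recall $\mathscr{AH}_3$ is the category of $(1+\alpha)$--$(1+\alpha)$ bimodules in $\mathscr{AH}_2$, so the category of right $(1+\alpha)$-modules realizes a Morita equivalence) produces one invertible $\mathscr{AH}_1$--$\mathscr{AH}_3$ bimodule, while $AH+1$ produces a second, non-equivalent one; their relative difference is a non-trivial class $h\in\mathrm{BrPic}(\mathscr{AH}_1)$. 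The subfactor $AH+2$ directly yields an invertible $\mathscr{AH}_1$--$\mathscr{AH}_1$ bimodule category, hence a class $g\in\mathrm{BrPic}(\mathscr{AH}_1)$, which by the discussion in the introduction does not lie in the subgroupoid generated by $AH$ and $AH+1$, so $g\notin\{e,h\}$. I would then verify that $g$ and $h$ each square to the identity — equivalently, that the corresponding bimodule categories are self-inverse, which can be read off from the principal-graph data of the relevant subfactors, or from the fact that the permutation each induces on the simples of $\mathcal{Z}$ is an involution — so that $e,g,h,gh$ are four distinct elements of $\mathrm{BrPic}(\mathscr{AH}_1)$, two of them ($g$ and $h$) involutions.

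For the upper bound I would analyse $\mathrm{Aut}^{\mathrm{br}}(\mathcal{Z})$ directly, which has the advantage of not requiring a classification of the fusion categories Morita equivalent to the $\mathscr{AH}_i$. One first records the modular data of $\mathcal{Z}=Z(\mathscr{AH}_1)$ — its simple objects, fusion ring, ribbon twists $\theta$, and $S$-matrix — computed via the tube algebra of the Asaeda--Haagerup subfactor. Any braided autoequivalence of $\mathcal{Z}$ induces a permutation of the simples preserving the fusion ring, $\theta$, and $S$; since $S$ is non-degenerate and the twists are very restrictive, showing that the group of such ``modular permutations'' has order at most $4$ is a finite combinatorial check. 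One then argues that the map from $\mathrm{Aut}^{\mathrm{br}}(\mathcal{Z})$ to modular permutations is injective for this $\mathcal{Z}$: a braided autoequivalence acting trivially on objects is a gauge transformation, and the group of these vanishes here. Combining with the lower bound forces $|\mathrm{BrPic}(\mathscr{AH}_1)|=4$ with two distinct involutions, so $\mathrm{BrPic}(\mathscr{AH}_1)\cong\mathbb{Z}/2\mathbb{Z}\oplus\mathbb{Z}/2\mathbb{Z}$, and likewise for $\mathscr{AH}_2$ and $\mathscr{AH}_3$; as a consistency check, the full sub-groupoid on the three objects is then connected with vertex group of order $4$, accounting for $3^2\cdot 4=36$ bimodule categories.

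The main obstacle is the upper bound: getting a concrete enough grip on the modular tensor category $\mathcal{Z}$ and then verifying the rigidity statement that no permutation of its simples outside the constructed Klein four-group is compatible with the modular data and realized by an honest braided functor — this is where essentially all of the technical work lies. An alternative but more laborious route would classify all indecomposable module categories over $\mathscr{AH}_1$ via connected algebra objects, using Frobenius--Perron dimension and the fusion graph to cut down the possible classes in $K_0(\mathscr{AH}_1)$, then single out the self-dual ones and count bimodule structures; that route would simultaneously classify the Morita-equivalent fusion categories, which is precisely the part left open here.
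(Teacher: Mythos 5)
Your route through $\mathrm{BrPic}(\mathscr{AH}_1)\cong\mathrm{Aut}^{\mathrm{br}}(\mathcal{Z})$ is legitimate in principle and genuinely different from what the paper does, but as written it has a real gap: the upper bound is a plan, not an argument. At the time of this result the modular data of the Drinfeld center of the Asaeda--Haagerup categories was not available, and producing it (tube-algebra or otherwise) is itself a major computation, comparable to or harder than anything in the paper; having it, your two key assertions are still unproved. First, that the group of permutations of the simples of $\mathcal{Z}$ preserving the fusion ring, twists and $S$-matrix has order at most $4$ is not something you can declare a ``finite combinatorial check'' without exhibiting the data --- it could a priori be larger, in which case the method gives no bound. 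Second, injectivity of $\mathrm{Aut}^{\mathrm{br}}(\mathcal{Z})\to(\text{permutations of simples})$ is not automatic: there exist nondegenerate braided categories with nontrivial braided autoequivalences that fix all objects, so ``the group of these vanishes here'' needs a proof specific to $\mathcal{Z}$. On the lower-bound side there is also a circularity: you justify $g\notin\{e,h\}$ ``by the discussion in the introduction,'' but that discussion is a statement of the theorem being proved; distinctness of the class coming from $AH+2$ from those generated by $AH$ and $AH+1$, and the fact that $g,h$ are involutions, must be extracted from actual fusion/principal-graph computations, which you only gesture at.

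For comparison, the paper never touches the center. It decategorifies the whole groupoid: it classifies by computer all admissible fusion modules and bimodules over the three Asaeda--Haagerup fusion rings, together with all multiplicatively compatible triples (compositions), and then combines this with the known equivalences (from $AH$, $AH+1$, the new $AH+2$, the $(1+\alpha)$-bimodule construction relating $\mathscr{AH}_2$ and $\mathscr{AH}_3$, and the absence of outer automorphisms of $\mathscr{AH}_1,\mathscr{AH}_2$) to show that exactly four fusion bimodules are realized between each pair $\mathscr{AH}_i$--$\mathscr{AH}_j$, each uniquely, and that the unique compositions of the four $\mathscr{AH}_1$--$\mathscr{AH}_1$ bimodules force the Klein four-group. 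That method has the drawback of not determining the full groupoid (extra Morita-equivalent categories are left open), which your center approach would indeed avoid --- but only once the modular data and the two rigidity claims above are actually established, and that is precisely where all the work lies.
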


This means that there are a total of $3 \times 3 \times 4 =36$ bimodule categories between the three Asaeda-Haagerup fusion categories. We compute the fusion rules for all of these bimodule categories, and classify all subfactors which realize them (up to isomorphism of the planar algebra). There are over 100 such subfactors and we compute all of their principal graphs.

Although we are able to identify the full Brauer-Picard group of Morita autoequivalences of these three Asaeda-Haagerup fusion categories, 
we do not identify the full Brauer-Picard groupoid.  In addition to the three fusion categories which we construct, 
there may be several additional fusion categories Morita equivalent to them.  
We make several conjectures concerning the full Brauer groupoid which we plan to address in future work. 
In particular, it appears as though the Asaeda-Haagerup fusion categories are Morita equivalent to several 
more symmetric fusion categories with $4$ invertible objects.  
We hope that these additional fusion categories will open the door to a more symmetric construction of the Asaeda-Haagerup subfactor, and to generalizations where $\mathbb{Z}/4\mathbb{Z}$ is replaced by other groups.

The nontriviality of the group of autoequivalences allows us to apply the recently developed extension theory of fusion categories of \cite{MR2677836}. In a follow-up paper with David Jordan we show that there are no nontrivial invertible objects in the Drinfeld center of the Asaeda-Haagerup categories; the extension theory then implies that there are $\mathbb{Z}/2\mathbb{Z} $-graded fusion categories associated to each of the nontrivial autoequivalences in the Brauer-Picard groupoid of which the zero-graded parts are the corresponding Asaeda-Haagerup fusion categories.  This gives $9$ new fusion categories, including one coming from $AH+2$ which has a self-dual object with the relatively small dimension $\sqrt{\frac{9+\sqrt{17}}{2}} \approx 2.56155281$.

The main new technique of this paper is to analyze the combinatorial structure at the level of fusion rings for the whole Brauer-Picard groupoid simultaneously.  In particular, instead of considering principal graphs one at a time (as we did in \cite{1102.2631}) we consider the richer structure of a fusion module (or nimrep), and instead of considering one fusion module at a time we consider the whole structure of all possible fusion bimodules and all possible rules for composition.  These techniques allow us to eliminate certain combinatorial possibilities which look fine on their own, but which cannot be made compatible with all the other Morita equivalences.

Two aspects of our analyis required heavy computation. First, verifying the existence of $AH+2$ requires two difficult computations.  We first find a concrete representation of the principal even part of $AH+1$ as connections on $4$-graphs, following the outline of Asaeda-Haagerup's construction of the Asaeda-Haagerup subfactor; then we construct $AH+2$ by checking the algebra object relations following Asaeda-Grossman.  Conceptually these are very close to the original calculations, but both calculations are more difficult and we used C++ and Mathematica for bookkeeping, and to handle arithmetic with algebraic numbers.  Second, to classify module categories over the Asaeda-Haagerup fusion categories, it was necessary to first classify fusion modules over the corresponding fusion rings. Additionally, we had to check the multiplicative compatibility of triples of fusion modules, i.e. whether there exists a map from the relative tensor product of two fusion modules to a given third module which is compatible with the various fusion ring actions, Frobenius reciprocity criteria, and Frobenius-Perron dimensions. This was done through an elaborate combinatorial search written in C++; we outline the basic ideas of this search in Section 5 below.

The organization of the paper is as follows:

In Section 2, we present some background material on fusion categories, subfactors, and connections which will be necessary for what follows.

In Section 3, we provide diagrammatic proofs of two theorems about algebra objects in fusion categories which we require in later sections.

In Section 4, we describe three Asaeda-Haagerup fusion categories and their relation to the subfactors $AH, AH+1, AH+2 $, and give their Grothendieck rings.  We also construct $AH+2$; since the calculations are tedious and very closely analogous to the arguments in \cite{MR1686551,MR2812458} we give a rapid sketch, and include the details in the supplementary note \textit{AHplus2.pdf} and a companion Mathematica notebook.

In Section 5, we describe the combinatorial data which you get when you decategorify the Brauer-Picard groupoid (several fusion rings, fusion bimodules between them, and rules for composition) and the strong compatibility conditions that they must satisfy (like Frobenius reciprocity); we explain how to compute representations of fusion rings; and we present the classification of fusion modules and fusion bimodules over the three Asaeda-Haagerup rings. We also introduce the property of multiplicative compatibility for triples of fusion bimodules and show how to compute all possible multiplications between fusion bimodules.

In Section 6, we find the Brauer-Picard group of the Asaeda-Haagerup fusion categories and describe the full subgroupoid of the Brauer-Picard group generated by $AH, AH+1, AH+2$. We also give several results about possible additional objects in the Brauer-Picard groupoid and make conjectures about their existence.
 
In Section 7, we classify all subfactors whose even parts are among these three Asaeda-Haagerup fusion categories. 

We also include with the paper the following supplementary data files in plain text, which may be found in the arXiv source: \textit{AH1Modules}, \textit{AH2Modules}, \textit{AH3Modules}, \textit{Bimodules}, \textit{BimoduleCompatibility}, and \textit{ModuleCompatibility}. These files contain the multiplication tables for the fusion modules and bimodules over the Asaeda-Haagerup fusion rings as well as the lists of compatible compositions of modules and bimodules. 

\textbf{Acknowledgements:}
The authors would like to thank Marta Asaeda for help computing the connection on the fundamental $4$-graph in $AH+1 $ and for useful conversations. The authors would like to thank Scott Morrison for pointing out that the dual graph of $AH+2$ must be the same as the principal graph. The authors would also like to thank Masaki Izumi, David Jordan, and Emily Peters for helpful conversations. Finally, the authors would like to thank the referee for a number of helpful comments and suggestions. The Fusion Atlas was helpful for drawing graphs and checking their Frobenius-Perron weights, although we do not use it in any proofs.   Pinhas Grossman was partially supported by NSF grant DMS-0801235, and by a postdoctoral fellowship at IMPA, Brazil, and Noah Snyder was supported by an NSF Postdoctoral Fellowship at Columbia University.

\section{Background}
\subsection{Fusion categories}

\begin{definition} \cite{MR2183279,MR1010160}.
 A fusion category over an algebraically closed field $k$ is a $k$-linear semisimple rigid monoidal category with finitely many simple objects, finite-dimensional morphism spaces and simple identity object. A conjugation on a $\mathbb{C}$-linear monoidal category is a contravariant involutive endofunctor with fixes objects and commutes with the tensor product. A C$^*$-tensor category is a rigid $\mathbb{C} $-linear semisimple monoidal category with conjugation $*$ such that: (a) every morphism space is a Banach space and (b) $\| f \circ g \| \leq \| f \|  \| g \|$ and  $\| f^* \circ f \| = \| f^* \| \| f \|$ for all composable morphisms $f,g$. A unitary fusion category is a fusion category which is also a C$^*$-tensor category. 
\end{definition}

Fusion categories are categorifications of rings, and there are corresponding categorified notions of module categories (left and right) and bimodule categories. For definitions, as well as the notion of tensor product of bimodule categories and invertibility, see \cite{MR1976459,MR2183279}. All module categories are assumed to be semisimple.

\begin{definition} \cite{MR1966524, MR2677836}.
 A Morita equivalence between two fusion categories $\mathscr{C}, \mathscr{D} $ is an invertible $\mathscr{C}-\mathscr{D}$ bimodule category. The Brauer-Picard groupoid of a fusion category $\mathscr{C} $ is the category whose objects are fusion categories which are Morita equivalent to $\mathscr{C}$ and whose morphisms are Morita equivalences between two such fusion categories (again considered up to equivalence as bimodule categories.)
\end{definition}

\begin{remark}
 The Brauer-Picard group is actually defined in \cite{MR2677836} as a $3$-groupoid. In this paper we only consider the $1$-truncation. 
\end{remark}

Every Morita equivalence is indecomposable both as a left and right module category \cite{MR2677836}. If ${}_{\mathscr{C}} \mathscr{K} {}_{\mathscr{D}} $ is a Morita equivalence, then $\mathscr{D} $ is the dual category (i.e. the category of module endofunctors) to the module catgeory  ${}_\mathscr{C} \mathscr{K}$.

\begin{definition}
 An algebra object in a monoidal category is an object $A$ together with morphisms $1 \rightarrow A$ and $A \otimes A \rightarrow A$ satisfying the usual unit and associativity axioms.
\end{definition}

Given an algebra object $A$ in a fusion category $\mathscr{C}$, one can define left and right modules objects over $A$ in $\mathscr{C}$. The category of right $A$-modules in $\mathscr{C}$ is a left module category over $\mathscr{C}$. 

\begin{definition}
We say that $A$ is semisimple if the category $A$-mod is semisimple.   Following \cite{MR3039775}, $A$ is separable if there is an $A$-mod-$A$ map $A \rightarrow A \otimes A$ splitting the multiplication map.  We call $A$ simple if it is semisimple and the category $A$-mod is indecomposable as a right $\mathscr{C}$-module category.  Finally, we say that $A$ is a division algebra if $A$ is simple as an $A$-module.  
\end{definition}

\begin{theorem} \label{thm:separable}
In characteristic $0$, any semisimple algebra is separable.  Hence, the category of bimodules $A-mod-A$ is semisimple.  Furthermore, if $A$ is a division algebra, then $\mathrm{Hom}_{A-mod-A} (A , A \otimes A)$ is $1$-dimensional.  Hence any non-zero bimodule map $A \rightarrow A \otimes A$ has the property that its composition with multiplication is non-zero.
\end{theorem}
\begin{proof}
The first two claims are \cite[Prop. 2.7]{MR3039775}.  By Frobenius reciprocity, we have that $\mathrm{Hom}_{A-mod-A} (A, A \otimes A) \cong \mathrm{Hom}_{A-mod} (A, A)$, hence if $A$ is a division algebra the space of bimodule maps $\mathrm{Hom}_{A-mod-A} (A, A \otimes A)$ is one-dimensional.  Since $A$ is separable, any such map is a multiple of the splitting of multiplication.
\end{proof}

Note that by Frobenius reciprocity for induction/restriction, a simple algebra is a division algebra if and only if $\mathrm{Hom}(1, A)$ is one-dimensional.

Let $A$ be a simple algebra in $\mathscr{C}$. The left module category of  right $A$-modules is also a right module category over the category of $A-A $ bimodules in $\mathscr{C} $, where the action is by the relative tensor product over the algebra object $A$. The resulting bimodule category is a Morita equivalence. 

\begin{definition}\cite{MR1976459}.
 Let ${}_{\mathscr{C}} \mathscr{M} $ be a left module category over a fusion category $\mathscr{C} $. The internal hom is a bifunctor (contravariant in the first variable and covariant in the second) from $\mathscr{M} \times \mathscr{M} \rightarrow \mathscr{C} $ such that for any objects $M_1,M_2 \in \mathscr{M} $ and  $X \in \mathscr{C} $, we have  $ Hom(X \otimes M_1, M_2 ) \cong Hom(X, \underline{Hom}(M_1,M_2 ) ) $. Similarly, one can define internal hom for right module categories.
\end{definition}

The internal endomorphisms of an object $M $, which is defined as $\underline{End}( M)=\underline{Hom}(M,M) $, is an algebra object in $\mathscr{C} $.

\begin{theorem} \cite{MR1976459}.
 Let $M $ be a simple object in a semisimple indecomposable module category ${}_{\mathscr{C}} \mathscr{M}  $ over a fusion category $ \mathscr{C}$. Then the category of module objects over $ \underline{End}( M) $ in $\mathscr{C}$ is equivalent to ${}_{\mathscr{C}} \mathscr{M}  $ as a module category, and $ \underline{End}(M) $ is a division algebra.
\end{theorem}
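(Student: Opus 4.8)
The strategy is to produce an explicit equivalence of module categories $F\colon {}_{\mathscr{C}}\mathscr{M} \to \mathrm{Mod}_{\underline{End}(M)}(\mathscr{C})$ by sending an object $N \in \mathscr{M}$ to the internal hom $\underline{Hom}(M, N)$, and then to verify that this functor is well-defined, $\mathscr{C}$-linear, fully faithful, and essentially surjective. First I would check that $\underline{Hom}(M,N)$ naturally carries the structure of a right module over the algebra $A := \underline{End}(M) = \underline{Hom}(M,M)$: the module action $\underline{Hom}(M,N) \otimes \underline{Hom}(M,M) \to \underline{Hom}(M,N)$ is obtained from the composition morphism for internal homs, which in turn comes from adjunction applied to the evaluation maps $X \otimes M \to N$. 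One must confirm this action satisfies the unit and associativity axioms, which follows formally from the associativity of composition of internal homs (itself a consequence of the uniqueness in the defining adjunction $Hom(X \otimes N_1, N_2) \cong Hom(X, \underline{Hom}(N_1, N_2))$).

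**Key steps.** After establishing that $F$ lands in $\mathrm{Mod}_A(\mathscr{C})$, I would equip $F$ with a module-functor structure, i.e.\ natural isomorphisms $F(X \otimes N) \cong X \otimes F(N)$ for $X \in \mathscr{C}$; these again come from the adjunction, using that $\underline{Hom}(M, X \otimes N) \cong X \otimes \underline{Hom}(M, N)$ when $X$ has a dual (rigidity of $\mathscr{C}$ is what makes this work). Next, for full faithfulness, I would show the map $Hom_{\mathscr{M}}(N_1, N_2) \to Hom_A(\underline{Hom}(M, N_1), \underline{Hom}(M, N_2))$ is a bijection; since $M$ is simple and $\mathscr{M}$ is semisimple, it suffices to check this on $N_1 = N_2 = $ a simple object and then extend by additivity, and the simple case reduces to a dimension count together with the observation that $A$-module maps out of $\underline{Hom}(M, N)$ are controlled by $Hom(1 \otimes M, N) = Hom(M, N)$. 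For essential surjectivity, I would use that $\mathscr{M}$, being semisimple with $M$ simple, has the property that every simple object $N'$ appears in $X \otimes M$ for some $X \in \mathscr{C}$ (because the action of $\mathscr{C}$ on the semisimple category $\mathscr{M}$ is "transitive" on the Morita-equivalence block containing $M$ — one should restrict to the indecomposable summand of $\mathscr{M}$ containing $M$, or assume $\mathscr{M}$ indecomposable); then any $A$-module in $\mathscr{C}$ is a quotient of a free module $X \otimes A = X \otimes \underline{Hom}(M,M) \cong \underline{Hom}(M, X \otimes M)$, and $F$ being fully faithful and exact lets one conclude it hits every object.

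**Main obstacle.** The routine parts are the diagram chases verifying associativity/unitality of the $A$-action and the module-functor coherence; these are standard internal-hom manipulations. The genuinely delicate step is essential surjectivity: one must argue that every $A$-module object arises as $\underline{Hom}(M, N)$, which requires knowing that the "free" $A$-modules of the form $\underline{Hom}(M, X \otimes M)$ generate the category $\mathrm{Mod}_A(\mathscr{C})$ under subquotients, and that $F$ preserves and reflects the relevant exactness. Equivalently, one shows $F$ is an equivalence by exhibiting a quasi-inverse $G$ sending an $A$-module $P$ to $P \otimes_A M$ (the relative tensor product, which makes sense because $M$ generates $\mathscr{M}$ as a module over $A$-modules), and checking the unit and counit $N \to \underline{Hom}(M, P) \otimes_A M$ and $\underline{Hom}(M, P \otimes_A M) \to P$ are isomorphisms — here the subtlety is ensuring the relative tensor product is well-behaved, for which semisimplicity of everything in sight is exactly what one needs. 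I would expect to invoke semisimplicity repeatedly to replace potentially derived constructions with honest ones, so the proof is ultimately a finite bookkeeping exercise once the framework is set up.
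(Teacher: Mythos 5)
The paper does not prove this statement at all---it is quoted as background from Ostrik \cite{MR1976459}---and your proposal reconstructs exactly the standard argument from that reference: the functor $N \mapsto \underline{Hom}(M,N)$ landing in right $\underline{End}(M)$-modules, with quasi-inverse $P \mapsto P \otimes_{\underline{End}(M)} M$, full faithfulness from the internal-hom adjunction and essential surjectivity from the fact that free modules generate under direct summands. Your outline is correct as it stands, including your observation that one must assume $\mathscr{M}$ indecomposable (or restrict to the indecomposable component containing $M$) for essential surjectivity, a hypothesis the paper's statement leaves implicit.
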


We can use the internal hom to give an explicit description of the inverse of a Morita equivalence ${}_\mathscr{C} \mathscr{K}_\mathscr{D}$.

\begin{lemma} \cite{MR2677836}
The inverse to a Morita equivalence ${}_\mathscr{C} \mathscr{K}_\mathscr{D}$ is the opposite category ${}_{\mathscr{D}} \mathscr{K}^{op} {}_{\mathscr{C}}$ (where the actions have been twisted by the duals).
\end{lemma}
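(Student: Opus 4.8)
The plan is to verify directly that the bimodule category ${}_{\mathscr{D}} \mathscr{K}^{op} {}_{\mathscr{C}}$ (with the left and right actions obtained from the original actions on $\mathscr{K}$ by precomposing with the duality functors on $\mathscr{D}$ and $\mathscr{C}$ respectively) is inverse to ${}_\mathscr{C} \mathscr{K}_\mathscr{D}$ under the relative tensor product, i.e.\ to exhibit equivalences of bimodule categories
\[
\mathscr{K} \boxtimes_{\mathscr{D}} \mathscr{K}^{op} \;\simeq\; \mathscr{C} \qquad\text{and}\qquad \mathscr{K}^{op} \boxtimes_{\mathscr{C}} \mathscr{K} \;\simeq\; \mathscr{D},
\]
where $\mathscr{C}$ and $\mathscr{D}$ are viewed as bimodule categories over themselves. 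Since the two statements are symmetric (apply the first to the Morita equivalence $\mathscr{K}^{op}$), it suffices to establish the first.

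First I would reduce to the algebra-object picture using Theorem 2.6 (Ostrik): choose a simple object $M \in {}_\mathscr{C}\mathscr{K}$ and let $A = \underline{\mathrm{End}}(M) \in \mathscr{C}$, so that $\mathscr{K} \simeq \mathrm{Mod}_\mathscr{C}\text{-}A$ (right $A$-modules in $\mathscr{C}$) as a left $\mathscr{C}$-module category, and $\mathscr{D} \simeq {}_A\mathrm{Bimod}_A(\mathscr{C})$ acting on the right by $\otimes_A$. Under this identification the opposite category $\mathscr{K}^{op}$ becomes the category ${}_A\mathrm{Mod}$ of left $A$-modules in $\mathscr{C}$, with $\mathscr{D}$ acting on the left by $\otimes_A$ and $\mathscr{C}$ acting on the right by the tensor product of $\mathscr{C}$ twisted by duality; the necessary compatibility of the duality twist is exactly what makes the two module structures commute, so I would check that naturality square carefully. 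Then the relative tensor product $\mathscr{K} \boxtimes_\mathscr{D} \mathscr{K}^{op}$ is computed as $\mathrm{Mod}\text{-}A \boxtimes_{{}_A\mathrm{Bimod}_A} {}_A\mathrm{Mod}$, which by the standard balancing/bar-resolution description of the relative tensor product is equivalent to the category of $A$-$A$ bimodules that are "induced", and the functor $X \otimes_A Y \mapsto$ (image in $\mathscr{C}$) together with the trace/evaluation pairing $\underline{\mathrm{Hom}}$ produces the comparison functor to $\mathscr{C}$. Concretely, one sends $M \boxtimes M$ to $\mathbf{1}_\mathscr{C}$ and more generally uses that $\underline{\mathrm{Hom}}_\mathscr{K}(-,-)\colon \mathscr{K}^{op}\times\mathscr{K}\to\mathscr{C}$ descends to a functor out of the relative tensor product, giving a $\mathscr{C}$-$\mathscr{C}$ bimodule functor to $\mathscr{C}$.

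To see this comparison functor is an equivalence I would argue that it is fully faithful and essentially surjective by counting: it is $\mathscr{C}$-bilinear, it hits the generating object $\mathbf{1}_\mathscr{C}$ (since $\underline{\mathrm{End}}(M)$-modules give $\mathbf{1}$ back via internal hom of $M$ with itself up to the canonical identification), and both sides are indecomposable $\mathscr{C}$-$\mathscr{C}$ bimodule categories — $\mathscr{C}$ trivially, and $\mathscr{K}\boxtimes_\mathscr{D}\mathscr{K}^{op}$ because it is a relative tensor product of invertible bimodule categories — so a nonzero bilinear functor between them that is surjective on a generator must be an equivalence. Alternatively, and perhaps more cleanly, one computes the Frobenius--Perron dimension: $\mathrm{FPdim}(\mathscr{K}\boxtimes_\mathscr{D}\mathscr{K}^{op}) = \mathrm{FPdim}(\mathscr{K})^2/\mathrm{FPdim}(\mathscr{D}) = \mathrm{FPdim}(\mathscr{C})$ using $\mathrm{FPdim}(\mathscr{C})=\mathrm{FPdim}(\mathscr{D})$ (a property of Morita equivalent fusion categories) and $\mathrm{FPdim}(\mathscr{K})^2 = \mathrm{FPdim}(\mathscr{C})\,\mathrm{FPdim}(\mathscr{D})$, so a fully faithful bimodule functor is forced to be essentially surjective.

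The main obstacle is bookkeeping with the duality twists: one has to pin down precisely how the right $\mathscr{C}$-action on $\mathscr{K}^{op}$ is defined (via $K \otimes X := {}^{*}X \triangleright K$ or $X^{*}\triangleright K$, i.e.\ which dual) so that (i) it is genuinely associative and unital, (ii) it commutes with the left $\mathscr{D}$-action, and (iii) the internal-hom functor $\underline{\mathrm{Hom}}_\mathscr{K}$ is simultaneously $\mathscr{C}$-balanced on the left and $\mathscr{D}$-balanced in the middle, which is what lets it factor through $\boxtimes_\mathscr{D}$. These checks are natural-transformation diagram chases of the kind that are routine in principle but where a wrong choice of left-vs-right dual breaks everything; once the conventions are fixed consistently, the rest is formal. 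I would present the conventions as a short lemma about the bimodule structure on $\mathscr{K}^{op}$ and then give the equivalence via the internal hom.
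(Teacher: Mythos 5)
A point of calibration first: the paper does not actually prove this lemma --- it is quoted from \cite{MR2677836}, and the only supporting discussion is the subsequent paragraph sketching M\"uger's algebra-object picture (write $\mathscr{K}\simeq \mathrm{mod}\text{-}A$ for $A=\underline{\mathrm{End}}(M)$, identify $\mathscr{K}^{op}$ with $A\text{-}\mathrm{mod}$ via $X\mapsto X^*$, and note the rigidity maps $1\to X\otimes_A X^*$ and $X^*\otimes X\to A$). Your reduction to that algebra picture, the duality-twist convention $K\cdot X:=X^*\triangleright K$, and the use of the $\otimes_A$ (internal hom) pairing to get a $\mathscr{D}$-balanced functor descending to $\mathscr{K}\boxtimes_{\mathscr{D}}\mathscr{K}^{op}\to\mathscr{C}$ all follow this standard route and are fine in outline. (One small circularity: reducing the second equivalence to the first "by symmetry" presumes $\mathscr{K}^{op}$ is itself a Morita equivalence; but since $\mathscr{K}$ is invertible by hypothesis, a one-sided inverse is automatically the inverse, so this is harmless once the first equivalence is proved.)

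The genuine gap is the final step, where you assert the comparison functor is an equivalence. The principle you invoke --- a nonzero $\mathscr{C}$-bilinear functor between indecomposable bimodule categories that hits the unit must be an equivalence --- is not a quotable fact and is not proved; nonzero bimodule functors can have higher multiplicity (already over $\mathrm{Vec}$, $V\mapsto V\otimes k^{2}$ is a nonzero bimodule endofunctor of the unique indecomposable bimodule category and is not an equivalence), and while hitting $\mathbf{1}$ does give essential surjectivity (since $c\simeq c\otimes\mathbf{1}$ lies in the image), it does not give full faithfulness. Your Frobenius--Perron alternative has the converse defect: it deduces essential surjectivity \emph{assuming} full faithfulness, which is exactly the part never established (and the identity $\mathrm{FPdim}(\mathscr{K})^{2}=\mathrm{FPdim}(\mathscr{C})\,\mathrm{FPdim}(\mathscr{D})$ needs the correct normalization of module-category dimensions). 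What is missing is precisely where invertibility --- equivalently, that $\mathscr{D}$ is the \emph{full} dual $A\text{-}\mathrm{bimod}\simeq\mathrm{Fun}_{\mathscr{C}}(\mathscr{K},\mathscr{K})$ --- does real work: full faithfulness of $\mathrm{mod}\text{-}A\boxtimes_{A\text{-bimod}}A\text{-mod}\to\mathscr{C}$ is obtained either from the canonical equivalence $\mathrm{Fun}_{\mathscr{D}}(\mathscr{M},\mathscr{N})\simeq\mathscr{M}^{op}\boxtimes_{\mathscr{D}}\mathscr{N}$ together with the double-dual theorem, or directly from the rigidity maps $1\to X\otimes_A X^*$, $X^*\otimes X\to A$ in the paper's sketch, which let one match $\mathrm{Hom}$ spaces via Frobenius reciprocity. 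Supplying that computation (or citing it from \cite{MR2677836}) is needed to close the argument; as written the proof is incomplete at its crux.
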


\begin{definition}
If $M$ is an object in a Morita equivalence ${}_\mathscr{C} \mathscr{K}_\mathscr{D}$ we denote the same object thought of in ${}_\mathscr{D} \mathscr{K}^{op}_\mathscr{C}$ by $M^*$.  The reason behind this notation is that $\underline{Hom}(M,N)$ (where $M$ and $N$ are both in $\mathscr{K}$) should be thought of as $N \otimes M^*$ where $M$ is an object in $\mathscr{K}^{op}$.
\end{definition}

Given two fusion categories $\mathscr{C}$ and $\mathscr{D}$, and a Morita equivalence ${}_\mathscr{C} \mathscr{K}_\mathscr{D}$ with inverse ${}_{\mathscr{D}} \mathscr{K}^{op} {}_{\mathscr{C}}$ one can take tensor products in several ways.  For example, one can tensor an object in $\mathscr{C}$ by an object in $\mathscr{K}$, or an object in $\mathscr{K}$ by an object in $\mathscr{K}^{op}$.  This can be formalized using the notion of a $2$-category.  This $2$-category has two objects $A$ and $B$. The $1$-morphisms from $A$ to itself are the objects of $\mathscr{C}$, the $1$-morphisms from $A$ to $B$ are the objects of $\mathscr{K}$, the $1$-morphisms from $B$ to $A$ are the objects of $\mathscr{K}^{op}$, and the $1$-morphisms from $B$ to itself are the objects of $\mathscr{D}$.  The $2$-morphisms are the $1$-morphisms in $\mathscr{C}$, $\mathscr{D}$, $\mathscr{K}$, and $\mathscr{K}^{op}$ \cite{MR1966524}, \cite{EGNO}.

The above ideas can be made somewhat more explicit by thinking about algebra objects (which was M\"uger's original approach).  Let $A$ be a division algebra object in a fusion category $\mathscr{C}$ and consider the Morita equivalence ${}_\mathscr{C} mod-A _{A-mod-A} $.  The inverse category is then given by the category of left $A$-modules.

More generally, given a fusion category $\mathscr{C} $, we can consider the $2$-category of all bimodules over division algebra objects in $\mathscr{C} $ (see \cite{MR2075605}). Then every fusion category in the Morita equivalence class of $\mathscr{C} $ is realized as the category of endomorphisms of some object in this $2$-category.  Thus any such fusion category is the category of $A-A$-bimodules for some divison algebra object $A$.  (In fact there will generally be multiple choices, corresponding to Morita equivalent division algebra objects). Similarly, every Morita equivalence between two fusion categories in the Morita equivalence class is realized as the category of $A-B$ bimodules between two division algebras in $\mathscr{C} $, and the inverse category is the category of $B-A $ bimodules. If $A,B,C$ are division algebras in $\mathscr{C} $,
the relative tensor product of the categories of $A-B$ bimodules and $B-C $ bimodules over the fusion category of $B-B $ bimodules is given by the category of $A-C $ bimodules; on the level of objects this relative tensor product is just composition of $1$-morphisms in the $2$-category.

Furthermore, in our setting, this $2$-category has duals.  Since a $2$-category of bimodules over a collection of special Frobenius algebras always has duals \cite{MR2075605}, it is enough to show that any division algebra in a fusion category over $\mathbb{C}$ has a canonical special Frobenius algebra structure.  (For the definition of special Frobenius algebra see \cite{MR1966524}, where they are called ``canonical Frobenius algebras.'')

\begin{lemma}
A divison algebra object in a fusion category over a field of characteristic $0$ has a canonical special Frobenius algebra structure.
\end{lemma}
\begin{proof}
First note that by semisimplicity, there is a trace map $\varepsilon: A \rightarrow 1$ which provides a splitting for the unit map $k \rightarrow A$.  The composition of multiplication and trace gives a pairing $A \otimes A \rightarrow 1$, which in turn provides a map of $A$-modules $A \rightarrow A^*$.  Since $A$ is simple as an $A$-module, this map must be an isomorphism, which is one of the equivalent definitions of a Frobenius algebra (see \cite{MR2500035}).  The composition $A \rightarrow A \otimes A \rightarrow A$ is non-zero, by Theorem \ref{thm:separable}.  Hence, after rescaling the trace, we get that $A \rightarrow A \otimes A \rightarrow A$ is the identity map and $A$ is a special Frobenius algebra.
\end{proof}

\begin{corollary} \cite{MR2075605}
If $\mathscr{C}$ is a fusion category over $\mathbb{C}$, then the $2$-category of division algebras in $\mathscr{C}$, bimodules, and bimodule maps is rigid.  In particular, it satisfies Frobenius reciprocity.
\end{corollary}

Note that if $M$ and $N$ are $A$-modules, then the $\underline{Hom}_{\mathscr{C}}(M,N)$ is just $N \otimes M^*$ where this dual is taken in the above rigid $2$-category.  A consequence of rigidity is Frobenius reciprocity.  In particular, we have associativity of internal hom with the relative tensor product.  Two particular consequences will be important for us.  The first is the ``basic identity'' of \cite{EGNO} (note that this basic identity holds much more generally), and the second gives a compatibility between relative Deligne tensor product and internal hom.

\begin{lemma}\label{basid}
If $L,M,N$ are objects in a Morita equivalence  ${}_\mathscr{C} \mathscr{K}_\mathscr{D}$,
then $$\underline{Hom}_{\mathscr{C}}(M,L) \otimes N \cong  L \otimes \underline{Hom}_{\mathscr{D}}(M,N)  .$$
\end{lemma}

\begin{lemma} \label{inthomass}
 Let ${}_{\mathscr{A}} \mathscr{K} { }_{\mathscr{B}}$ and ${}_{\mathscr{B}} \mathscr{L} { }_{\mathscr{C}}$ be invertible bimodule categories over fusion categories $\mathscr{A}$, $\mathscr{B}$, and $\mathscr{C} $.
 
 Then for any objects  $K ,M \in \mathscr{K}$ and $L, N \in \mathscr{L} $
 $$\underline{\text{Hom} }_{\mathscr{A}}(K \boxtimes L, M \boxtimes N) \cong\underline{\text{Hom} }_{\mathscr{A}}(K,M \otimes \underline{\text{Hom}}_{\mathscr{B}}(L,N))$$
and similarly
$$\underline{\text{Hom} }_{\mathscr{C}}(K \boxtimes L, M \boxtimes N ) \cong \underline{\text{Hom} }_{\mathscr{C}}( L, \underline{\text{Hom}}_{\mathscr{B}}(K,M) \otimes N),$$
where the tensor product between objects in $\mathscr{K} $ and $\mathscr{L} $ refers
to the relative tensor product of bimodule categories over $\mathscr{B} $.

Moreover, we have $$Hom(K \boxtimes L, M \boxtimes N)  \cong Hom(\underline{\text{Hom} }_{\mathscr{B}}(K,M),\underline{\text{Hom} }_{\mathscr{B}} (N,L)) .$$
\end{lemma}

Finally we recall the notion of dimension in fusion and module categories.

\begin{definition}
 The Grothendieck ring of a fusion category $\mathscr{C}$ is the based ring defined on the free Abelian group with basis indexed by the simple objects of $\mathscr{C}$ whose multiplicative structure constants on basis elements are given by the fusion rules of the category. There is a unique homomorphism from the Grothendieck ring to the real numbers which is positive on all the basis elements, called the Frobenius-Perron dimension. 
\end{definition}

We denote the Frobenius-Perron dimension of an object $X$ in a fusion category by $dim(X)$ or $d(X)$. If $M$ is an object in a semisimple module category over a fusion category, we define $dim(M)=\sqrt{dim(\underline{End}(M) ) }$. If $M$ belongs to a bimodule category over two fusion categories, the left and right internal end have the same dimensions so there is an unambiguous dimension associated to $M$ (see Lemma \ref{lem:leftright}).

%
%
%
%
%
%

\subsection{Subfactors}

A (II$_1$) subfactor is a unital inclusion $N \subset M$ of II$_1$ factors. The subfactor is said to have \textit{finite index} if $M $ is a finitely generated projective module over $N$ \cite{MR696688, MR860811}. In this case, letting $\kappa = {}_N M {}_M  $ and $\bar {\kappa} = {}_M M {}_N $, the bimodules $\kappa \otimes_M \bar{\kappa} $ and $\bar{\kappa } \otimes_N \kappa$ generate C$^*$-tensor categories of $M-M$ and $N-N$ bimodules, respectively; these tensor categories are called the \textit{principal and dual even parts} of the subfactor. The subfactor is said to have \textit{finite depth} if each of the even parts has only finitely many simple objects up to isomorphism - in this case the even parts are unitary fusion categories, and the category of $N-M$ bimodules generated by the even parts and $\kappa $ is a Morita equivalence between them. We will often use sector notation, in which object in categories are represented by lowercase Greek letters, tensor product symbols are suppressed and $(\kappa, \lambda) := dim(Hom( \kappa,\lambda) )$.

\begin{definition} \cite{MR1257245, MR1444286}.
 A Q-system in a C$^*$-tensor category is an algebra object such that the multiplication map is a coisometry. A Q-system has a dimension which coincides with the Frobenius-Perron dimension of the algebra object in the case of a unitary fusion category.
\end{definition}

For a finite index subfactor as above, the bimodules $\kappa \otimes_M \bar{\kappa} $ and $\bar{\kappa } \otimes_N \kappa$ have natural Q-system structures. Moreover, any $Q$-system in a C$^*$ tensor category can be realized in this way from a subfactor \cite{MR1257245, MR1444286, MR1960417}. The index of the subfactor is the dimension of the Q-system $\kappa \otimes_M \bar{\kappa} $.

\begin{definition}
 Let $N \subset M$ be a subfactor with fundamental bimodule $\kappa = {}_N M {}_M  $, principal and dual even parts $\mathscr{C} $ and $\mathscr{D} $, and Morita equivalence ${}_{\mathscr{C} }  \mathscr{K} {}_{\mathscr{D}}$ generated by $\kappa$. The principal graph of the subfactor is the bipartite graph with even vertices indexed by  $\mathscr{C}$, odd vertices indexed by $\mathscr{K}$, and $(\xi \kappa, \lambda )$ edges connecting each even vertex  $\xi$ with each odd vertex  $ \lambda$. The dual graph is defined similarly but using the dual even part $\mathscr{D} $ (which acts on the right of $\kappa $) instead.   
\end{definition}

\subsection{Connections}

The theory of connections on graphs was introduced by Ocneanu \cite{MR996454}. A good resource is \cite{MR1642584}. See also \cite{MR1686551}. 

\begin{definition}
 By a $4$-graph we mean a graph with $4$ finite sets of vertices $V_i$, for $i \in \mathbb{Z}/4\mathbb{Z}$ and $4$ finite sets of edges $E_i$,  such that each edge $e$ in $E_i$ connects a vertex in $V_i $ (called the source $s(e)$) with a vertex in $V(i+1) $ (called the target, $t(e)$). A cell in a $4$ graph is a choice of edges $e_i \in E_i $ such that $s( e_{i+1}) = t( e_i)  $.  A connection on a $4$-graph is an assignment of a complex number to each cell.
\end{definition}

We think of the edges as being placed in a square, clockwise with $0$ at the top left. We then call $V_0-E_0-V_1 $ the top graph, $V_1-E_1-V_2$ the right graph, etc.

\begin{figure}
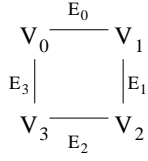
 
\centering
\hpic{square} {0.8 in} 
\caption{Schematic representation of a $4$-graph.}
\end{figure}

 Given a finite depth subfactor $N \subset M$, one can define a flat, binuitary connection on a $4$-graph whose upper and left graphs are each the principal graph of the subfactor and whose lower and right graphs are each the dual graph of the subfactor. Moreover, any flat, binuitary connection on a $4$-graph arises this way.  

\begin{definition}
 The edge space $H^G_E$ of a $4$-graph $G$ is the Hilbert space with orthonormal basis indexed by the edges of the $4$-graph. For every pair of vertices $v \in V_i, w \in V_{i+1}$, we have a subspace $H^G_{v,w}  \subset H^G_E$ spanned by the edges $\{ e | s(e)=v, t(e) = w \} $. An edge space map between two $4$-graphs $G_1,G_2$ with the same vertex sets $\{V_i\}$ is a linear map $T: H^{G_1}_E \rightarrow H^{G_2}_E $ such that $T(H^{G_1}_{v,w}) \subseteq H^{G_2}_{v,w}, \forall v,w$. A gauge transformation on a $4$-graph $G$ is an edge space map from $G$ to itself. A vertical gauge transformation is a guage transformation which fixes all of the horizontal edge spaces. 
\end{definition}

A connection on a $4$-graph can be extended (linearly for the left and lower graphs, antiliearly for upper and right graphs) to cells consisting of vectors in the edge space. Then composing the connection with any gauge transformation gives a new connection.

In \cite{MR1686551} it was shown that any binuitary connection on a $4$-graph defines a bimodule over the II$_1$ factors generated by the string algebras of the upper and lower graphs. Moreover, sums and compositions of connections were defined which correspond to direct sums and tensor product of bimodules. 

\begin{theorem}\cite{MR1686551}
Two connections on $4$-graphs with the same connected horizontal graphs give isomorphic bimodules iff the vertical graphs are also the same and the connections are equivalent up to a unitary vertical gauge tranformation.
\end{theorem}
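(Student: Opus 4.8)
The statement is a rigidity result for the connection-to-bimodule construction recalled above, so the plan is to (i) make that construction explicit enough to read off invariants, (ii) dispatch the easy direction, and (iii) reconstruct the vertical graphs and the connection from the bimodule, up to vertical gauge. Fix two $4$-graphs $G$ and $G'$ with the same connected horizontal graphs (upper and lower), carrying binunitary connections $W$ and $W'$, and let $\mathcal{P}$ and $\mathcal{Q}$ be the II$_1$ factors generated by the string algebras of the upper and lower graphs. These depend only on the horizontal graphs, so they are literally the same for $G$ and $G'$, and each comes equipped with its canonical tower of truncated string algebras $P_0 \subset P_1 \subset \cdots \subset \mathcal{P}$ and $Q_0 \subset Q_1 \subset \cdots \subset \mathcal{Q}$. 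The construction of \cite{MR1686551} then produces bimodules ${}_{\mathcal{P}} H_W {}_{\mathcal{Q}}$ and ${}_{\mathcal{P}} H_{W'} {}_{\mathcal{Q}}$; I will use that $H_W$ has a canonical dense subspace with an orthonormal basis of ``path pairs'' (a horizontal string together with a vertical string), on which $\mathcal{P}$ and $\mathcal{Q}$ act via the horizontal part on the two sides, that the connection $W$ enters precisely as the unitary relating the basis adapted to the left action to the one adapted to the right action, and that $H_W$ is the inductive limit of its $(P_n,Q_m)$-isotypic pieces.

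For the ``if'' direction, let $g$ be a unitary vertical gauge transformation, so $g$ is the identity on the upper and lower edge spaces and unitary on the left and right ones, and suppose $W' = W^g$. Since the horizontal string algebras involve only horizontal edges, $g$ induces the identity on $\mathcal{P}$ and $\mathcal{Q}$ and merely relabels the vertical part of each path pair; this relabeling extends to a unitary $U_g\colon H_W \to H_{W'}$, and because the two actions reach the vertical strings only through composition with the connection, the identity $W' = W^g$ says exactly that $U_g$ intertwines both pairs of actions. Hence $U_g$ is a bimodule isomorphism. (If instead the vertical graphs of $G$ and $G'$ differ, then by the argument below some finite-stage $(P_n,Q_m)$-decomposition of $H_W$ already differs from that of $H_{W'}$, so no isomorphism exists; this is why the ``same vertical graphs'' clause is genuine content.)

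The substance is the ``only if'' direction. Let $U\colon H_W \to H_{W'}$ be a bimodule isomorphism. Being $\mathcal{P}$-$\mathcal{Q}$-linear, $U$ is automatically $P_n$-$Q_m$-linear for all $n,m$ --- these are the \emph{same} subalgebras in both models, the tower being intrinsic to the string-algebra construction --- so $U$ carries each $(P_n,Q_m)$-isotypic subspace of $H_W$ onto the corresponding one of $H_{W'}$. A compactness / double-sequence argument in the spirit of \cite{MR996454} (see also \cite{MR1642584}) identifies the multiplicities of these isotypic pieces with path counts in the vertical graphs, so matching them for all $n,m$ forces $G$ and $G'$ to have the same left and right graphs. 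Within each isotypic piece, $U$ then sends a left-adapted path-pair basis of $H_W$ to a left-adapted path-pair basis of $H_{W'}$ and a right-adapted basis to a right-adapted basis; once the horizontal strings are fixed, the only freedom in such a pair of choices is a unitary acting independently on the vertical strings in each block $H_{v,w}$, i.e.\ a unitary vertical gauge transformation $g$. Following $U$ through the two change-of-basis matrices (unitary by binunitarity) then gives $W' = W^g$, completing the proof.

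The main obstacle is this last direction: showing that an \emph{abstract} bimodule isomorphism is necessarily implemented by a vertical gauge transformation, i.e.\ that $H_W$ carries no more symmetry than the path model displays. This is exactly where binunitarity enters --- it is what makes the two path-pair bases genuinely orthonormal and the relevant change-of-basis matrices unitary in both the horizontal and the vertical senses, hence what forces the residual ambiguity to be a \emph{unitary} vertical gauge transformation rather than an arbitrary one; flatness, by contrast, plays no role here. The delicate point is the bookkeeping in the compactness step --- keeping the horizontal and vertical directions cleanly separated while passing to the limit --- and that is where I would expect to spend the most effort.
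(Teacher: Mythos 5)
First, a caveat: the paper does not prove this statement at all --- it is quoted as background from \cite{MR1686551}, so the relevant comparison is with Asaeda--Haagerup's own argument (their analysis of intertwiners between open-string bimodules), not with anything in this paper. Your ``if'' direction is fine and your overall plan (reduce an abstract bimodule isomorphism to finite-level data and recognize it as a vertical gauge transformation) is the standard one.

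The gap is in the mechanism you propose for the ``only if'' direction. The $(P_n,Q_m)$-isotypic components of the completed Hilbert space $H_W$ cannot do the work you ask of them: $P_n\otimes Q_m^{op}$ is finite dimensional and $H_W$ is an infinite-dimensional separable Hilbert space, so every isotypic component occurs with infinite multiplicity, and matching them across $n,m$ detects neither the vertical graphs nor the connection. Likewise, knowing that $U$ preserves isotypic components only determines $U$ on each component up to an arbitrary unitary of the (infinite-dimensional) multiplicity space; it does not follow that $U$ carries a left-adapted path-pair basis to a left-adapted path-pair basis --- that an abstract intertwiner is implemented on the dense string subspace by edge-space maps is precisely the content to be proved, not a consequence of equivariance for the finite stages. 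The actual proof identifies the intertwiner space $\mathrm{Hom}_{\mathcal{P}\text{-}\mathcal{Q}}(H_W,H_{W'})$, via an Ocneanu-style compactness/flatness argument on the double sequence of string algebras, with the finite-dimensional space of edge-space maps $T$ satisfying an exchange relation with the two connections (a commuting-square condition level by level); a unitary element of that space forces the vertical graphs of $G$ and $G'$ to coincide and is exactly a unitary vertical gauge transformation with $W'=W^{T}$. You gesture at this compactness step, but as written your reduction would not deliver either conclusion, so the core of the theorem remains unproven in your sketch.
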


This gives a concrete realization of the unitary fusion categories coming from a subfactor: one starts with the flat connection on the principal and dual graphs, which corresponds to the fundamental bimodule $\kappa = {}_N M {}_M $. Then by decomposing powers of $\kappa $ and $\bar{\kappa } $ into direct sums of irreducible connections, one obtains the connections corresponding to the simple objects in the even parts of the subfactor (the ``even'' bimodules), as well as those corresponding to the simple objects in the Morita equivalence (the ``odd'' bimodules). Finally, morphisms in these categories, which are intertwiners of the bimodules, can be represented as edge space maps of the corresponding connection $4$-graphs. 

\section{Diagrams for algebra objects}

In this section we prove two lemmas which we will need later on.  Both use diagrammatic techniques.  The first lemma shows that certain objects automatically admit algebra structures, and the second allows us to characterize when $\kappa \bar{\kappa}$ and $\lambda \bar{\lambda}$ are isomorphic as algebra objects.

\subsection{Intertwiner diagrams}

Following \cite{MR0281657, MR1091619, MR1113284, math.QA/9909027}, we will often use diagrams for computations in tensor categories.  Morphisms are represented by vertices or boxes, from which emanate strings labeled by the source objects (upwards) and by the target objects (downwards). Straight strings labeled by objects correspond to identity morphisms, and strings labeled by identity objects are often suppressed. Tensoring is depicted by horizontal concatenation, and composition by vertical concatenation. Diagrams are read from top to bottom. Then various planar isotopies can be applied to the diagram according to the duality rules of the category.

Let $\xi$ be an object in a fusion category with left and right duals ${}^*\xi $ and $\xi^* $, respectively. Then by the rigidity property, there are morphism $\eta^r_{\xi}:1 \rightarrow \xi \otimes \xi^* $, $\epsilon^r_{\xi}: 1 \rightarrow \xi^* \otimes \xi$ such that $ (Id_{\xi} \otimes \epsilon^r_{\xi} ) \circ (\eta^r_{\xi} \otimes Id_{\xi} )=Id_{\xi}$. (Here we suppress the associativity and unit isomorphisms.) Similarly, there are morphisms  $\eta^l_{\xi}:1 \rightarrow {}^* \xi \otimes \xi $, $\epsilon^l_{\xi}: 1 \rightarrow \xi \otimes {}^*\xi$ such that $ ( \epsilon^l_{\xi} \otimes Id_{\xi} ) \circ ( Id_{\xi} \otimes \eta^l_{\xi}) = Id_{\xi}$.

Diagrammatically, we write:

$$ \vpic{epsilon_r} { 0.5in } = \eta^l_{\xi}, \vpic{epsilon_l} { 0.5in } = \eta^r_{\xi} , \vpic{eta_r} { 0.5in } = \epsilon^l_{\xi}, \vpic{eta_l} { 0.5in } = \epsilon^r_{\xi}    $$ 

Then the duality relations are expressed as:

$$\vpic{ left_dual} {1.2in}, \vpic{ right_dual} { 1.2 in} $$

If the fusion category is unitary, then ${}^*\xi \cong \xi^* $ and we write $\bar{\xi} $ for the dual object and reserve the ${}^*$ symbol for the unitary conjugation on morphisms. Let $\xi = \bar{\xi} $ be a self-dual object in a unitary fusion category. We may choose $\epsilon^r_{\xi} = (\eta^l)^*_{\xi}$ and   $\epsilon^l_{\xi} = (\eta^r)^*_{\xi}$, and then there is a number $c_{\xi} \in \{ \pm1\} $ such that $\eta^l_{\xi } = c \eta^r_{\xi} $. If $c = 1 $ we say that $\xi$ is symmetrically self-dual, and we write 
$$ \vpic{eta_s} {0.5in} , \vpic{epsilon_s} {0.5in} $$ for the common left and right duality maps. The invariant $c$ is multiplicative in the sense that if $\xi, \mu, \nu$ are self-dual objects in a unitary fusion category such that $(\xi, \mu \nu  ) \neq 0 $, then $c_{\xi} = c_{\mu}c_{\nu} $.

Finally we recall the following computation for Q-systems corresponding to $2$-supertransitive algebra objects.

\begin{lemma}\cite{MR2418197} \label{2stqs} Let $\sigma$ be a symmetrically self-dual simple object in a unitary fusion category with $d=dim(\sigma)\neq 1$. Fix a duality map $\hpic{l3} {0.3in} $. Then $1+\sigma$ admits a Q-system if there is an isometry $d^{-\frac{1}{4}}  \hpic{l8} {0.5in} $ such that the following relations are satisfied:

(1) $\hpic{l1} {0.8in} = \hpic{l2} {0.8in} $\\\\ 

(2) $\displaystyle \frac{\sqrt{d+1}}{d} \left(\hpic{l5} {0.8in} - \hpic{l4} {0.8in} \right) =  \hpic{l7} {0.8in} - \hpic{l6} {0.8in} $.

\end{lemma}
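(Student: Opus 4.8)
The plan is to write down an explicit algebra structure on the object $A = 1\oplus\sigma$ and to verify the three defining properties of a Q-system (unit axiom, associativity, and that the multiplication is a coisometry) by reducing each one to a diagrammatic identity, the only nontrivial ones being (1) and (2). So first I would take the unit $\eta\colon 1\to A$ to be the canonical inclusion of the $1$-summand, and build the multiplication $m\colon A\otimes A\to A$ summand by summand using $A\otimes A\cong 1\oplus\sigma\oplus\sigma\oplus(\sigma\otimes\sigma)$. On the summands involving $1$ the unit axiom forces $m$ to be the obvious identity maps; on $\sigma\otimes\sigma$ the map $m$ has a component $\sigma\otimes\sigma\to 1$, which is a scalar $s$ times the fixed duality map, and a component $\sigma\otimes\sigma\to\sigma$, which is a scalar $t$ times the given (normalized) trivalent vertex $d^{-1/4}(\cdots)$; all other components of $m$ on $\sigma\otimes\sigma$ are set to zero. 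The only free parameters are $s$, $t$ and the overall scaling of $\eta$, and the comultiplication entering the coisometry condition is just $m^{*}$.

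Next I would check the axioms. The unit axiom holds by construction. For associativity $m(m\otimes\mathrm{id})=m(\mathrm{id}\otimes m)$, decompose $A^{\otimes 3}$ according to which of the three tensor factors equal $1$ and which equal $\sigma$; every case containing at least one copy of $1$ collapses via the unit axiom, so it suffices to treat $\sigma\otimes\sigma\otimes\sigma$. Composing with the projections of $A$ onto the $1$- and $\sigma$-summands splits this into two identities. Expanding $m|_{\sigma\otimes\sigma}$ into its cup-part and its trivalent-vertex part turns each side into a sum of planar diagrams; using sphericality together with the symmetric self-duality of $\sigma$ (so that $c_{\sigma}=1$ and the vertex behaves well under rotation) the cup-terms straighten out and the trivalent-vertex terms can be matched. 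The surviving constraint on the $\sigma$-output is precisely relation (1) (after absorbing the scalars $s,t$), and the constraint on the $1$-output follows from sphericality and (1). Finally, the coisometry condition $mm^{*}=\mathrm{id}_{A}$ (after rescaling $m$) is checked componentwise: on the $1$-summand it reduces to closed-loop values, while on the $\sigma$-summand it is the ``bubble'' $\sigma\to\sigma\otimes\sigma\to\sigma$ built from the vertex and its adjoint, which I would evaluate using relation (2); the prefactor $\tfrac{\sqrt{d+1}}{d}$ in (2) is exactly what forces the two resulting scalars to agree, and this pins down $s$ and $t$ as explicit functions of $d$, completing the verification.

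I expect the main obstacle to be the diagrammatic bookkeeping in the associativity step: one must keep track of every cup/cap and every trivalent-vertex term produced by expanding $m$, of all the powers of $d$ introduced by the normalization $d^{-1/4}$ and by closed loops, and of the various $\sqrt{d+1}$ factors, and then show that after using only sphericality, the multiplicativity of $c_{\sigma}$, and relations (1)--(2), everything collapses to a tautology. A secondary difficulty is to choose the normalizations $s$, $t$ and the scaling of $\eta$ consistently so that the unit, associativity and coisometry conditions all hold simultaneously; this is precisely what accounts for the otherwise ad hoc constant appearing in (2).
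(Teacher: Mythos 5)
The paper itself gives no proof of this lemma (it is quoted from \cite{MR2418197}), so the comparison is with the standard argument, whose overall shape you have correctly reproduced: take $A=1\oplus\sigma$, build $m$ out of the forced unit components, a multiple $s$ of the duality map on the $\sigma\otimes\sigma\to 1$ component and a multiple $t$ of the normalized trivalent vertex on the $\sigma\otimes\sigma\to\sigma$ component, and check unit, associativity and the coisometry condition componentwise. However, you have misassigned which relation does which job, and as written the verification would not go through. The coisometry $mm^{*}=c\,\mathrm{id}_A$ never needs relation (2): its $(1,1)$-entry is a sum of closed-loop scalars, its $(\sigma,\sigma)$-entry is the bubble formed from the vertex and its adjoint, which is a scalar multiple of $\mathrm{id}_\sigma$ automatically because $\sigma$ is simple and the vertex is normalized to be an isometry, and the mixed entries vanish since $\mathrm{Hom}(1,\sigma)=0$. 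Relation (2) is an identity whose right-hand side is a difference of diagrams each containing \emph{two} trivalent vertices; it cannot even be substituted into that one-vertex bubble computation, so your plan to ``evaluate the bubble using relation (2)'' is a type mismatch, and the prefactor $\tfrac{\sqrt{d+1}}{d}$ is not what makes the coisometry scalars agree.

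Where relation (2) is genuinely needed is in associativity, precisely in the component you assign to relation (1). On $\sigma^{\otimes 3}$, projecting $m(m\otimes\mathrm{id})-m(\mathrm{id}\otimes m)$ onto the $\sigma$-summand of the target yields $t^{2}$ times the difference of the two ways of composing two trivalent vertices minus $s\cdot(\text{unit scalars})$ times the difference of the two one-cap diagrams; once the unit and coisometry normalizations fix $s$ and $t$, the required identity is exactly relation (2), including the constant $\tfrac{\sqrt{d+1}}{d}$ (this is also how the paper uses the lemma in the proof of Theorem \ref{4stqs}, where a linear relation between the two two-vertex diagrams, the cup--cap and the identity is derived and then fed into Lemma \ref{2stqs} together with rotational invariance of the vertex). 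The projection onto the $1$-summand involves a single vertex capped off on either side, and that single-vertex identity is the content of relation (1) (equivalently, rotational invariance). A single-vertex identity like (1) cannot produce the two-vertex identity needed on the $\sigma$-output, so your outline never uses (2) where it is required and the associativity check would fail; the fix is to swap the roles: (1) handles the unit-object component of associativity, (2) handles the $\sigma$-component, and the coisometry is automatic from the isometry normalization.
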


\subsection{$4$-supertransitive objects admit Q-systems }
In this section we prove that if $\xi^2  \cong 1 + \xi + \mu $, where $\mu $ is a simple object with $d(\mu ) > 1 $, then $1+\xi$ admits an algebra structure.  This is a Wenzl-style recognition theorem and our argument is inspired by similar results in \cite{MR1237835,MR2132671,MR2783128}.

Let $\xi $ be a symmetrically self-dual object in a unitary fusion category. Then the rotation operator $$\vpic{rotation } {0.8in} $$ acts as a period $3$ automorphism on the vector space $Hom(\xi, \xi^2) $. If $(\xi, \xi^2)=1$ then the automorphism is necessarily scalar multiplication by a cube root of unity; this cube root of unity is called the rotational eigenvalue of $\xi$.

\begin{lemma}
 Let $\xi $ be an object in a unitary fusion category such that $\xi^2  \cong 1 + \xi + \mu $, where $\mu $ is a simple object with $d=d(\mu ) > 1 $. Then $\xi $ is symmetrically self-dual with a rotational eigenvalue of $1$.
\end{lemma}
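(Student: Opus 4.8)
The plan is to extract both claims — self-duality and rotational eigenvalue $1$ — from the structure of the fusion rule $\xi^2 \cong 1 + \xi + \mu$ together with associativity. First I would check that $\xi$ is self-dual: since $1$ appears in $\xi \otimes \xi$ with multiplicity one, $\xi$ has a dual among its own tensor factors, and because $1$ is simple and $\operatorname{Hom}(1,\xi\otimes\xi)$ is one-dimensional, $\bar\xi \cong \xi$. For the sign invariant $c_\xi$, I would use the multiplicativity property recalled just before the lemma: we need $\xi$ to be \emph{symmetrically} self-dual, i.e. $c_\xi = 1$. The natural route is to compute $\dim \operatorname{Hom}(\xi, \xi\otimes\xi) = 1$ and combine it with a count of the dimension of $\operatorname{Hom}(\mathbf 1, \xi^{\otimes 3})$, or more efficiently to argue directly that the morphism $1 \to \xi\otimes\xi$ is invariant under the rotation (this is the standard fact that for a self-dual object appearing once in its own square, the triple-point rotation has a well-defined eigenvalue, and the ``bent'' copy of $\operatorname{Hom}(\mathbf 1,\xi\otimes\xi)$ must match).

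The main computation is the determination of the rotational eigenvalue $\omega$ of the (unique up to scalar) map $T \in \operatorname{Hom}(\xi,\xi\otimes\xi)$. The key idea is to use $\mu$, together with the constraint $d(\mu) > 1$, to rule out the primitive cube roots of unity. The technique I would use is the one cited as inspiration (\cite{MR1237835,MR2132671,d2n-links}): one writes down the Jones–Wenzl-type relation that the projection onto the $\xi$-summand of $\xi\otimes\xi$ must satisfy, expresses the associativity constraint (the pentagon/``F-matrix'' relation) for the three-fold tensor product $\xi\otimes\xi\otimes\xi$ diagrammatically, and extracts a scalar equation relating $\omega$, $d = d(\xi)$ and $d(\mu)$. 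Concretely: decompose $\xi^{\otimes 3}$ using the fusion rule in two different ways and track the coefficient of $\xi$ (or of $\mu$); the rotation $\omega$ enters through the two ways of capping off a string, and the resulting consistency equation forces $\omega \in \{1, \zeta, \zeta^2\}$ with the primitive roots leading to a forbidden value of $d(\mu)$ — typically $d(\mu) = 1$ or $d(\mu) < 1$, contradicting the hypothesis. Alternatively, one can invoke a known result that the rotational eigenvalue is $1$ unless the object generates (a piece of) the $D_{2n}$ or $\mathfrak{su}(2)$-type categories, in which the relevant $\mu$ would have dimension $\le 1$.

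The hard part will be the diagrammatic bookkeeping for the eigenvalue: setting up a consistent choice of the isometry $\xi \to \xi\otimes\xi$ and its rotations, normalizing the trivalent vertex, and carrying the associativity relation through carefully enough to isolate a clean scalar identity. Once that identity is in hand, the conclusion $\omega = 1$ (hence also, retroactively, $c_\xi = 1$ via compatibility of the rotation with the duality morphism) is immediate. I would organize the write-up as: (i) self-duality of $\xi$ from multiplicity-one of $\mathbf 1$; (ii) the rotation operator is well-defined and scalar on the one-dimensional space $\operatorname{Hom}(\xi,\xi^{\otimes 2})$; (iii) a diagrammatic associativity computation in $\operatorname{Hom}(\xi,\xi^{\otimes 3})$ yielding a polynomial constraint on $\omega$ with coefficients in $d(\xi), d(\mu)$; (iv) using $d(\mu) > 1$ to eliminate $\omega = \zeta^{\pm 1}$, so $\omega = 1$, which also forces $c_\xi = 1$.
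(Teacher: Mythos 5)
Your step (i)--(ii) is essentially the paper's argument and is fine as far as it goes: since $(\xi,\xi^2)=1$, multiplicativity of the self-duality sign gives $c_\xi=c_\xi^2$, hence $c_\xi=1$. But note the logical order in your outline is backwards at the end: you cannot recover $c_\xi=1$ ``retroactively'' from $\omega=1$, because the period-$3$ rotation on the one-dimensional space $\operatorname{Hom}(\xi,\xi^2)$ is only defined once a single symmetric duality map is in hand; symmetric self-duality has to be established first (and it is the one-line multiplicativity argument, not a count of $\dim\operatorname{Hom}(\mathbf 1,\xi^{\otimes 3})$).

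The genuine gap is the main claim. Your steps (iii)--(iv) announce that an associativity/F-matrix computation will ``extract a scalar equation relating $\omega$, $d(\xi)$ and $d(\mu)$'' whose primitive-root solutions force $d(\mu)\le 1$, and you explicitly defer this as ``the hard part.'' That deferred computation \emph{is} the lemma, and the guessed shape of the contradiction is not what actually occurs. The mechanism in the paper is: assume $\omega\neq 1$; then a particular capped/partially rotated copy of the trivalent vertex is invariant under the full rotation (which scales by $\omega^3=1$), yet it lies in the one-dimensional space $\operatorname{Hom}(\xi^2,\xi)$ spanned by an $\omega$-eigenvector, so it must vanish. Consequently the element $x\in\operatorname{End}(\xi^2)$ built by stacking two trivalent vertices through a rotation satisfies $xe_2=0$, where $e_1,e_2,e_3$ are the projections onto the $1$-, $\xi$- and $\mu$-summands; together with $xe_1=e_1$ and a partial-trace computation giving $\operatorname{tr}(x)=0$, this pins down $x$ as an explicit combination of $e_1$ and $e_3$, i.e.\ a linear relation among four small diagrams. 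Applying the one-click rotation permutes the diagrams in that relation, and comparing the two relations via linear independence of $e_1,e_2,e_3$ forces a degenerate value of the dimension, which the hypothesis $d(\mu)>1$ excludes. None of this machinery (the vanishing of the rotation-invariant diagram, the orthogonality to $e_2$, the partial-trace normalization, the rotated relation) appears in your proposal, so as written it is a plausible plan rather than a proof. Finally, your fallback option --- citing ``a known result that the rotational eigenvalue is $1$ unless the object generates a $D_{2n}$ or $\mathfrak{su}(2)$-type category'' --- is not an available theorem; the three twistings of $\mathrm{Rep}(S_3)$ (exactly the excluded case $d(\mu)=1$, with all three cube roots occurring) show how delicate the statement is and why the explicit diagrammatic argument is needed.
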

\begin{proof}
 Since $(\xi,\xi^2 )=1 $, we have $c_{\xi}=c_{\xi}^2 $, so $c_{\xi}=1 $ and $\xi $ is symmetrically self-dual. Fix a duality map $\eta_{\xi}$  and an isometry $$\vpic{xi_iso} {0.5in} =v \in Hom(\xi,\xi^2 )$$ Let  $$\vpic{xi_iso_dual} {0.5in } = v^* .$$ Then the following projections form an orthognal basis of $End(\xi^2)$:
 $$e_1 = \frac{1}{d} \vpic{xijones} {0.5in} ,\hspace{.3in}  e_2 = \vpic{xiisoproj} {0.5in} , \hspace{.1in} \text{and} \hspace{.1in} e_3 = 1 - e_1-e_2.$$

Assume for the sake of contradiction that the rotational eigenvalue is not $1$.  Consider the diagram  $$\vpic{xirotated} {0.5in} .$$  Note that this diagram is rotationally invariant (since any rotation picks up a cube of the rotational eigenvalue).  Therefore, since $(\xi^2, \xi) = 1$, we see that it must be zero.   Let  $$x = \vpic{xiprojs} {0.5in} := \vpic{xixirotated } {0.8 in} \in End(\xi^2).$$  
As we saw a moment ago $xe_2=0 $. Let $tr: End(\xi^2 )\rightarrow \mathbb{C}$ be the coefficient of the identity under the action of the linear operator $$\vpic{vertrace } {0.5in} .$$ Then $tr(e_1)=\frac{1}{d}, tr(e_2)=1, tr(e_3)=d-1-\frac{1}{d}  $.  Also we have $xe_1=e_1 $ and  $tr(x)=0$. Write $x = e_1+be_3$. Then we have $0 = tr(x)=\frac{1}{d}+b(d-1-\frac{1}{d} )$. Solving for $b$ and gathering terms, we get the linear relation   
 $$\vpic{ xiprojs } {0.5in} -  \frac{1}{d^2-d-1} \vpic{xiisoproj } {0.5in } = \frac{d-1}{d^2-d-1} \vpic{xijones } {0.5in} - \frac{1}{d^2-d-1} \vpic{xixiid} {0.5in} .$$ The rotation operator $ \vpic{xixirotop } {0.5in} $ acts on this linear relation by permuting the two diagrams on the left and permuting the two diagrams on the right, and multiplying the rotation by $1+d-d^2 $ gives

$$a \vpic{ xiprojs } {0.5in} -(d^2-d-1)  \vpic{xiisoproj } {0.5in } =  \vpic{xijones } {0.5in} -(d-1 ) \vpic{xixiid} {0.5in} .$$

 By the linear independence of $e_1,e_2,e_3 $ this is only possible if $d=1$.   This gives a contradiction; thus the rotational eigenvalue must be $1$.
\end{proof}

\begin{remark}
Note that the assumption that $d(\mu ) > 1$ is necessary - if $d(\mu ) = 1$ there are three fusion categories with those fusion rules coming from twisting $\mathrm{Rep}_{S_3}$ in different ways, and the three fusion categories yield three different rotational eigenvalues.
\end{remark}

\begin{theorem}\label{4stqs}
 Let  $\xi $ be an object in a unitary fusion category such that $\xi^2  \cong 1 + \xi + \mu $, where $\mu $ is a simple object with $d(\mu ) > 1 $. Then $1+\xi $ admits a Q-system. 
\end{theorem}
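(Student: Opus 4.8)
The plan is to deduce the theorem from Lemma~\ref{2stqs} applied to $\sigma=\xi$. First note that $\xi$ is simple and self-dual: since $1$ occurs exactly once in $\xi^{2}$ we have $(\xi,\bar\xi)=(\xi^{2},1)=1$, so $\bar\xi\cong\xi$ and $End(\xi)=\mathbb{C}$. Writing $d=d(\xi)$, the dimension count $d^{2}=1+d+d(\mu)$ together with $d(\mu)>1$ gives $(d-2)(d+1)>0$, so $d>2$ and in particular $d\neq1$. By the preceding lemma $\xi$ is symmetrically self-dual with rotational eigenvalue $1$. Fix a duality map $\eta_{\xi}$ and a trivalent vertex $v\in Hom(\xi,\xi^{2})$, rescaled so that $v^{*}v=\sqrt d\,Id_{\xi}$; then $d^{-1/4}v$ is the isometry required by Lemma~\ref{2stqs}, and it remains only to verify relations (1) and (2) of that lemma for this vertex.

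Relation (1) is the cyclic symmetry of the trivalent vertex (together with the theta normalization, which is arranged by the rescaling of $v$): since by the preceding lemma the rotation operator on $Hom(\xi,\xi^{2})$ is the identity, $v$ is rotationally invariant, and symmetric self-duality makes the two sides of (1) agree.

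For relation (2) one works inside $End(\xi^{2})$, which is $3$-dimensional and carries the orthogonal basis of projections $e_{1}$ (onto $1$), $e_{2}=d^{-1/2}vv^{*}$ (onto $\xi$), $e_{3}=1-e_{1}-e_{2}$ (onto $\mu$), together with the trace $tr$ for which $tr(e_{1})=\frac1d$, $tr(e_{2})=1$, $tr(e_{3})=d-1-\frac1d$, exactly as set up in the proof of the preceding lemma. Each of the four diagrams occurring in relation~(2) is an element of $End(\xi^{2})$, hence a linear combination of $e_{1},e_{2},e_{3}$, and the coefficients are closed diagrams --- the theta net, the bubble $v^{*}v$, and the evaluation obtained by closing up two (appropriately rotated) trivalent vertices. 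The crucial point is that these closed diagrams are now honest functions of $d$: the preceding lemma has removed the residual cube-root-of-unity ambiguity in the phase of $v$, so there is no leftover gauge freedom. Concretely, letting $x\in End(\xi^{2})$ be the element obtained by gluing two oppositely rotated copies of $v$ (the element called $x$ in the preceding lemma, but now with eigenvalue $1$ the auxiliary diagram that was forced to vanish there becomes a definite scalar multiple of $v^{*}$), one expands $x=e_{1}+ae_{2}+be_{3}$, determines $a$ and $b$ from $tr(x)$ and from comparing $x$ with its rotation (using $d(\mu)=d^{2}-d-1$), substitutes these expansions into relation~(2), and matches the $e_{1},e_{2},e_{3}$ components. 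This reduces relation~(2) to a small number of scalar identities in $d$ which hold identically; hence $1+\xi$ admits a Q-system.

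The main obstacle is precisely the bookkeeping of this last step: correctly expressing each diagram of relation~(2) in the basis $\{e_{1},e_{2},e_{3}\}$ and, above all, pinning down the one genuinely new scalar --- the value of the two-vertex closure, equivalently the coefficient $b$ of $x$ --- without a sign or phase error. Once one grants that rotational symmetry (from the preceding lemma) has fixed the phase of $v$, so that every closed diagram is a definite rational function of $d$, relation~(2) is a routine algebraic verification and the theorem follows from Lemma~\ref{2stqs}.
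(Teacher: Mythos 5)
Your proposal is correct and follows essentially the same route as the paper: establish symmetric self-duality and trivial rotational eigenvalue via the preceding lemma, expand the two-vertex element $x=e_1+ae_2+be_3$ in $End(\xi^2)$, pin down $a$ and $b$ using the rotation action, linear independence, and traces (indeed $b=\frac{1}{1+d}$), and then verify the relations of Lemma~\ref{2stqs} from the resulting identity together with rotational invariance of the trivalent vertex. The level of detail you defer (the final diagrammatic check of relation (2)) is the same step the paper also treats as a routine verification.
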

\begin{proof}
 Using the same notation as in the previous proof,  we write \\ $x=e_1+ae_2+ be_3$ and use the rotation and linear independence to solve for $b=\frac{1}{1+d}$, $ a=b\pm 1 $. Taking traces of the equation determines the sign, and we get $$\vpic{ xiprojs } {0.5in} - \vpic{xiisoproj } {0.5in } = \frac{1}{d+1}\left( \vpic{xijones } {0.5in} + \vpic{xixiid} {0.5in} \right) .$$ Using this and the rotational invariance of $\vpic{xi_iso} {0.5in} $ the Q-system relations for $1+\xi$ can be easily verified using Lemma \ref{2stqs}. 
\end{proof}
 
%

\subsection{Algebra isomorphisms come from invertible objects in the dual category}
   Let $\kappa$ be an object in an invertible bimodule category ${}_{\mathscr{C}} \mathscr{K} {}_{\mathscr{D}} $ over two fusion categories. Then there is an object $\bar{\kappa}$ in the inverse category ${}_{\mathscr{D}} \bar{\mathscr{K}} {}_{\mathscr{C}}$ such that there are unit maps $\eta_{\kappa}: 1 \rightarrow \kappa \otimes_{\mathscr{D}} \bar{\kappa} $ and $\eta_{\bar{\kappa}}: 1 \rightarrow \bar{\kappa} \otimes_{\mathscr{C}} \kappa  $ along with co-unit maps satisfying the usual duality relations, which together with the multiplication maps

$$ \vpic{kalg} {0.75in} , \vpic{kalgd} {0.75in} $$ 

form the internal end algebras of $\kappa $ in $\mathscr{C}, \mathscr{D} $, respectively. 

\begin{definition}
 An algebra isomorphsim between two algebra objects $\xi, \eta$ in a fusion category is an isomorphism between $\xi $ and $ \eta$ which commutes with the multiplication and unit maps on $\xi, \eta $.
\end{definition}

If we express the algebra objects as $\xi = \kappa \bar{\kappa}, \eta = \lambda \bar{\lambda} $ for two module objects and write the isomorphism as $$ \vpic{kkbllb} {0.45in} ,$$ the algebra isomorphism conditions become $$\vpic{kkbllbu} {1in} \text{ and } \vpic{kkbllbm } {1.8in} .$$

\begin{theorem}\label{alg_iso}
 Let $\xi = \kappa \bar{\kappa}$ and $\eta = \lambda \bar{\lambda}  $ be isomorphic algebra objects in a fusion category $\mathscr{C}$, where $\kappa $ and $\lambda $ are objects in a left module category ${}_{\mathscr{C}} \mathscr{K} $. Then there is an invertible object $\alpha$ in the dual category $\mathscr{D}$ such that $\kappa \alpha \cong \lambda$.
\end{theorem}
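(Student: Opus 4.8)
The idea is to use the internal-hom description of the dual category $\mathscr{D}$ together with the algebra isomorphism to produce the desired invertible object. Recall from Theorem~\ref{4stqs}'s neighbourhood in the text that, writing $\mathscr{K}$ as $A$-$\mathrm{mod}$ for $A = \underline{\mathrm{End}}(M_0)$ a fixed simple object, the dual category $\mathscr{D}$ is identified with $A$-$\mathrm{mod}$-$A$, and $\underline{\mathrm{Hom}}(\kappa,\lambda)$ is naturally an object of $\mathscr{D}$; morally it is ``$\lambda \otimes \bar\kappa$'' in the language of the $2$-category. So the candidate for $\alpha$ is $\alpha := \underline{\mathrm{Hom}}(\kappa,\lambda)$, or equivalently $\bar\kappa \otimes_{\mathscr{C}} \lambda$ computed inside $\mathscr{D}$. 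The two things to check are: (i) that $\alpha$ is invertible in $\mathscr{D}$, and (ii) that $\kappa \alpha \cong \lambda$ as objects of $\mathscr{K}$.

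For (ii), once $\alpha$ is defined as $\bar\kappa \otimes_\mathscr{C} \lambda$, the isomorphism $\kappa\alpha \cong \kappa\bar\kappa\lambda \cong \lambda$ is essentially the counit map for the Morita duality between $\mathscr{C}$ and $\mathscr{K}$ applied on the left — i.e. $\kappa \otimes_\mathscr{D} (\bar\kappa \otimes_\mathscr{C} \lambda) \cong (\kappa \otimes_\mathscr{D} \bar\kappa)\otimes_\mathscr{C}\lambda \cong \xi \otimes_\mathscr{C} \lambda$, and since $\xi = \kappa\bar\kappa = \underline{\mathrm{End}}(\kappa)$ acts on $\lambda$ through... this is where the algebra isomorphism enters. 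The point is that $\lambda$ is a $\xi$-module via $\eta = \lambda\bar\lambda$ acting, and transporting along the algebra isomorphism $\xi \cong \eta$ makes $\lambda$ into a $\xi$-module whose ``trivial submodule'' picks out $\lambda$ itself from $\xi\lambda$. Concretely, the algebra-isomorphism diagram (the two displayed conditions just before the theorem statement) is exactly what is needed to show the map built from the isomorphism and the multiplication is a splitting. I would draw this diagrammatically: precompose the isomorphism $\xi \to \eta$ with $\eta_\kappa$ on one side and postcompose with the $\eta$-action $\eta\lambda \to \lambda$, and use the unit and associativity conditions to see this exhibits $\lambda$ as a retract of $\kappa\alpha$; then a dimension count (using $\dim\alpha = \dim\bar\kappa\cdot\dim\lambda/\dim A$ and $\dim\kappa = \dim\lambda$, which hold since $\xi\cong\eta$ forces $\dim\kappa = \dim\lambda$) forces equality, so $\kappa\alpha\cong\lambda$.

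For (i), invertibility of $\alpha$: since $\kappa\alpha\cong\lambda$, we get $\bar\lambda \cong \bar\alpha\,\bar\kappa = \alpha^*\bar\kappa$ in the appropriate inverse category, hence $\eta = \lambda\bar\lambda \cong \kappa\alpha\alpha^*\bar\kappa$. But $\eta \cong \xi = \kappa\bar\kappa$, and $\kappa$ is a generator of the bimodule category in a suitable sense, so comparing $\kappa\alpha\alpha^*\bar\kappa \cong \kappa\bar\kappa$ should force $\alpha\alpha^* \cong 1$ in $\mathscr{D}$ — the cleanest route is again Frobenius reciprocity (Lemma on adjoints in the $2$-category) and Frobenius–Perron dimensions: $\dim\alpha\cdot\dim\alpha^* = \dim(\alpha\alpha^*) \geq 1$ with equality iff $\alpha$ is invertible, and the computation $\dim(\kappa\alpha) = \dim\lambda = \dim\kappa$ forces $\dim\alpha = 1$, which for a based-ring element of a fusion category means $\alpha$ is invertible.

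\textbf{Main obstacle.} The hard part is item (ii) — making rigorous, via the two displayed algebra-isomorphism diagrams, that $\lambda$ is precisely the image of the unit component under the transported action, rather than merely a summand of $\kappa\alpha$ of the right total dimension. In other words, the genuine content is a diagrammatic manipulation showing that the algebra-isomorphism conditions (not just any isomorphism $\xi\cong\eta$ of objects) are what guarantee $\kappa\alpha\cong\lambda$; an abstract object-level isomorphism would not suffice. I expect this to come down to carefully chasing the coherence (unit/associativity) cells in the style of the earlier diagrammatic lemmas, and the dimension count then does the rest.
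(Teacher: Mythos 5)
There is a genuine gap, and it sits exactly where you locate the ``main obstacle.'' Your candidate $\alpha := \underline{Hom}(\kappa,\lambda) \cong \bar\kappa\,\lambda$ cannot be the desired object: its Frobenius--Perron dimension is $d(\kappa)d(\lambda)=d^2>1$ (already for $\kappa=\lambda$ it is $\underline{End}(\kappa)$, which is not invertible), and $\kappa\alpha \cong \kappa\bar\kappa\lambda$ contains $\lambda$ only as a proper summand, of the wrong total dimension. Consequently your step (ii) can at best exhibit $\lambda$ as a retract of $\kappa\alpha$, the ``dimension count forces equality'' does not go through for this $\alpha$, and your invertibility argument in (i) is circular, since it derives $\dim\alpha=1$ from the not-yet-established isomorphism $\kappa\alpha\cong\lambda$. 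The true $\alpha$ must be a distinguished \emph{summand} of $\bar\kappa\lambda$, and the whole content of the theorem is producing the projection onto it from the algebra-isomorphism data --- which your outline gestures at but never constructs.

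The paper's proof supplies exactly this missing mechanism. Take the algebra isomorphism $\phi:\kappa\bar\kappa\to\lambda\bar\lambda$ and rotate it using the duality maps, obtaining a morphism $x$ which one views as an endomorphism of $\bar\kappa\lambda$ in the dual category $\mathscr{D}$. The multiplication-compatibility condition for $\phi$ (the second displayed diagram before the theorem) yields $x\circ x = d\,x$ with $d=\dim\kappa=\dim\lambda$, so $\frac1d x$ is an idempotent; define $\alpha$ to be its image. The unit-compatibility condition then computes the categorical trace of this idempotent to be $1$, i.e.\ $\dim\alpha=1$, so $\alpha$ is invertible --- with no reference to $\kappa\alpha\cong\lambda$. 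Finally, tensoring the inclusion $\alpha\hookrightarrow\bar\kappa\lambda$ with $\kappa$ and composing with the evaluation $\kappa\bar\kappa\to 1$ gives a map $\kappa\alpha\to\lambda$, which is an isomorphism (now the dimension count is legitimate, since $\dim(\kappa\alpha)=\dim\lambda$). So to repair your proposal you would need to replace ``$\alpha=\underline{Hom}(\kappa,\lambda)$'' by ``$\alpha=$ the image of the idempotent obtained by rotating $\phi$,'' and carry out the two short diagrammatic computations ($x^2=d\,x$ from the multiplication axiom, trace $=1$ from the unit axiom); the rest of your structure then falls into place.
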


\begin{proof}
 Let $$\vpic{klrot} {0.6in} $$ be the image of the algebra isomorphism under rotation, and let $d = dim(\kappa) =dim(\lambda)$. Then from the algebra isomorphism multiplication relation we obtain: $$\vpic{klrotsq} {0.5in} = d  \vpic{klrot} {0.5in} .$$  Thus we see that $$\frac{1}{d} \vpic{klrot} {0.5in} $$ is an idempotent. Call the object which is the image of the idempotent $\alpha $. From the algebra isomorphism unit relation we get: $$ dim(\alpha ) = \frac{1}{d} \vpic{kltr} {0.75in} = 1.$$ So $\alpha $ is invertible. Let $$\vpic{klalpha } {0.3in} $$ be an inclusion map. Then $$\vpic{kliso } {0.7in} $$ is the desired isomorphism from $ \kappa \alpha$ to $\lambda $.

\end{proof}

\section{The Asaeda-Haagerup categories}
In this section we recall some results about the Asaeda-Haagerup subfactor \cite{MR1686551}, and the related AH+1 subfactor \cite{MR2812458}.  In addition we sketch the construction of a new subfactor, the AH+2 subfactor, using the techniques from those two papers.

\subsection{AH and AH+1}
The Asaeda-Haagerup subfactor, constructed in \cite{MR1686551}, is a finite depth subfactor with index $\frac{5+\sqrt{17}}{2} $  and principal graphs $$\hpic{AHpg_labeled} {1 in} \text{ and }\hpic{AHdualpg} {0.25in} .$$

Here we label the simple objects in the principal even part as well as the generating module object $\kappa$. As part of their construction of the subfactor, Asaeda and Haagerup explicitly wrote down the unique (up to gauge choice) connection for $\kappa$, the connection $\rho$ defined (up to vertical gauge choice) by $1 + \rho \cong \kappa \bar{\kappa }$, and the connection for the automorphism $\alpha$ (which is again uniquely determined up to vertical gauge choice, and can be taken to be identically $1$.) Then they wrote down a vertical gauge transformation (i.e. an intertwiner) between $\rho \alpha \kappa $ and $\alpha \rho \alpha \kappa $.

Motivated by the study of quadrilaterals of factors, Izumi conjectured the existence of a Q-system for the object $1+ \bar{\kappa} \alpha \kappa $ in the dual even part. This conjecture was verified in \cite{MR2812458} by showing that the following diagrammatic relations hold, and that they lead a solution to the Q-system equations \ref{2stqs} for $1+\bar{\kappa} \alpha \kappa$:

\begin{formula} \label{algebrarelations}
The Asaeda-Haagerup algebra relations:

(a) $\hpic{eq2lhsafter_3NN} {1in} = c Id_{\alpha \rho}$, $\hpic{eq2rhsafter_3NN} {1in} =c Id_{\rho \alpha}$ \\

(b) $\hpic{eq1NNlhs} {0.88in} = \hpic{eq1NNrhs} {0.88in} $\\

(c) $\hpic{eq2lhsafter_1NN} {1.2in} = \hpic{eq2rhsafter_1NN} {1.2in} $\\

 (d) $\hpic{Rlt1sym} {1in} = \hpic{Rrt1sym} {1in} $.

\end{formula}

Note that although the vertices of the intertwiner diagrams are each only determined up to a scalar, the relations make sense independently of a choice of scalars, since the same vertices appear on both sides of each equation (except for equations (a), which contain an arbitrary scalar on the right hand side.)

To establish these relations, it was necessary to evaluate the diagrams on various ``states'', i.e. labelings of the diagrams by vertices and edges from the connection 4-graphs of the appropriate objects. The states were evaluated by decomposing the diagrams into tensor products and compositions of the elementary intertwiners $1 \rightarrow \alpha^2, 1 \rightarrow \kappa \bar{\kappa }, 1 \rightarrow \bar{\kappa} \kappa, \rho \rightarrow \kappa \bar{ \kappa}  $, and $\rho \alpha \kappa \rightarrow \alpha \rho \alpha \kappa $. These elementary intertwiners act on edges by explicit formulas given by gauge transformation matrices.   

As a consequece of the existence of the Q-system, we obtain a subfactor with
index $\frac{7+\sqrt{17}}{2}$ and graphs $$\hpic{AHp1_labeled} {1 in} \text{ and } \hpic{AHp1dual} {0.25in} .$$

Again we label the simple objects in the principal even part as well as the generating module object $\lambda $.
\subsection{Existence of AH+2}
Because of the similar fusion structure of this new subfactor, it was conjectured in \cite{MR2812458} that the procedure could be iterated once more to obtain a Q-system for the object $1+ \bar{\lambda} \beta \lambda$ in the dual even part (which is the same category as the dual even part of the original Asaeda-Haagerup subfactor.)

It turns out that this is indeed the case, and we briefly sketch the argument here. The computation consists of two parts: first we must replicate Asaeda and Haagerup's computation of the connections for $\lambda, \xi, \beta$ and the intertwiner $\xi \beta \lambda \rightarrow \beta \xi \beta \lambda$. Then we must verify that the corresponding relations \ref{algebrarelations} above hold for these connections. 

In Figure 2
we indicate what the $4$ graphs are, using a labeling and display similar to that used by \cite{MR1686551}. Note that in the figure we have ``unwrapped the square'', so reading from top to bottom, we have first the upper, then the right, then the lower, then the left graphs. 

\begin{figure}
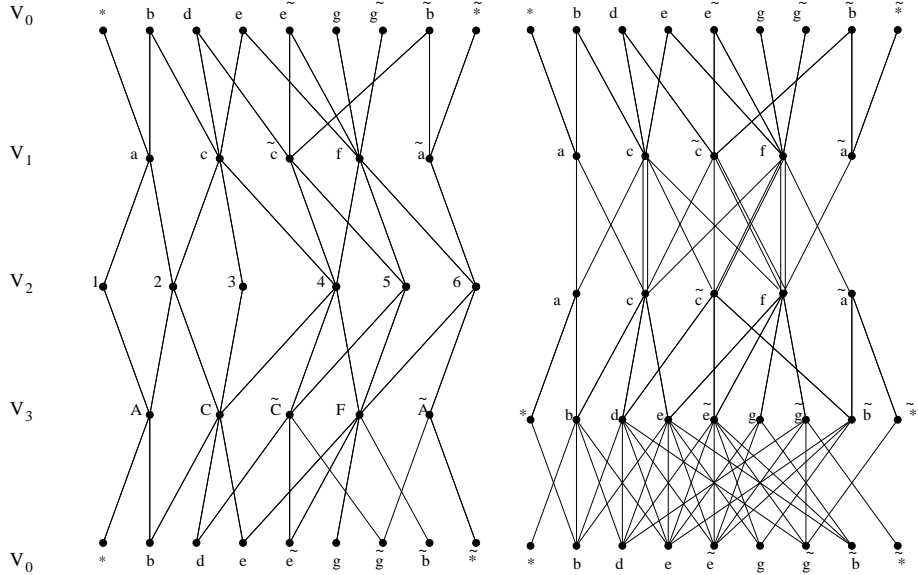
\label{4graphpics}
\centering
\hpic{kappa4g} {3 in}, \hpic{rho4g} {3in}

\caption{ The $4$-graphs for the connections of $\lambda$ (left) and $\xi$ (right) }

\end{figure}

Note that if $N \subset M $ is the subfactor with Q-system $\lambda \bar{\lambda} $, then the $4$-graphs for the connections of any of the associated $N-M$ (resp. $N-N$) bimodules has the same horizontal graphs (the first and third levels form the top in Figure 2
) as that of  $\lambda$ (resp. $\xi$.) Therefore to specify the $4$-graph of such a bimodule it suffices to specify the vertical graphs (the second and fourth levels in 
Figure 2).

The vertical graphs of the $4$-graph for the identity $N-N$ bimodule have exactly one edge emanating from each vertex in $V_1,V_3 $ connecting to the vertex with the same labeling in $ V_{i+1}$. The vertical graphs of the $4$-graph for the $N-N$ bimodule $\beta$ have exactly one edge emanating from each vertex $x$ in $V_1,V_3 $ connecting to the vertex $\tilde{x} \in V_{i+1}$, where $\tilde{\tilde{x}} = x$ and $\tilde{x}=x $ if there is no vertex labeled $\tilde{x}$.

\begin{lemma}
(a) There is a unique connection on the $4$-graph for $\lambda$ up to gauge choice, which may be taken to be real.\\
(b) There are exactly two real connections on the $4$-graph for the automorphism $\beta$ up to vertical gauge choice - one is identically $1$, the other has the value $-1$ on one cell.\\
(c) The connection for $\beta$ is not identically $1$.  
\end{lemma}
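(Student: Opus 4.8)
The plan is to reduce all three parts to the biunitarity axioms on the explicit $4$-graphs of Figure \ref{4graphpics}, using gauge freedom to cut the unknowns down to something one can solve by hand. For (a), a flat biunitary connection on the $4$-graph for $\lambda$ already exists, since this is the fundamental $4$-graph of the $AH+1$ subfactor, whose existence was established in \cite{MR2812458}; so the only content is uniqueness up to gauge. I would place an unknown on every cell, impose the unitarity and renormalization conditions block by block, and then normalize a maximal spanning family of edges to $1$ using horizontal and vertical gauge transformations. Because the vertex degrees in the $4$-graph for $\lambda$ are small and large portions of it are essentially tree-like, what is left after gauge fixing is a small polynomial system in a handful of entries; solving it gives uniqueness, and inspecting the solution shows it may be taken real. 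This is exactly the computation Asaeda and Haagerup carried out for $\kappa$ in \cite{MR1686551} one level down: tedious but routine, with the output a short list of explicit connection matrices.

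For (b) the decisive simplification is that the vertical graphs of the $4$-graph for $\beta$ have exactly one edge out of each vertex of $V_1$ and $V_3$, realizing the involution $x \mapsto \tilde x$. A cell is therefore determined by a pair $(e_0,e_2)$ of horizontal edges with $s(e_0) = \widetilde{t(e_2)}$, so every renormalization block of the connection is $1 \times 1$, and biunitarity forces each connection value to be a phase of modulus fixed by the Frobenius--Perron weights of its four vertices. A vertical gauge transformation rescales these values by phases attached to the vertices of $V_1$ and $V_3$; computing the orbits of cells under this action, and using connectedness of the horizontal graphs, one finds that after vertical gauge fixing exactly one parameter remains free, and the last renormalization identity forces it to be a sign. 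One then checks that both signs do give biunitary connections, so there are exactly two real connections up to vertical gauge: the identically-$1$ one, and one equal to $-1$ on a single cell.

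Part (c) is the real obstacle, since the sign distinguishing the two connections of (b) is a holonomy invisible to the combinatorics of the $4$-graph alone and so must be computed. The route I would take is to produce the $\beta$ connection directly rather than characterize it abstractly: decompose $\bar\lambda\lambda$ (or a suitable power of $\lambda$) into irreducibles using the connection from part (a), isolate the sub-connection carried by the summand $\beta$, and verify that it is vertical-gauge-equivalent to the $-1$ solution and not to the trivial one. Concretely this means tracking the explicit gauge-transformation matrices of the elementary intertwiners $1 \to \lambda\bar\lambda$ and $1 \to \bar\lambda\lambda$ through the decomposition and evaluating on states, in the same spirit as the state computations behind the relations in \ref{algebrarelations}, and watching the $-1$ appear as a holonomy that no identically-$1$ connection could produce. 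A more conceptual alternative would be to exhibit a single closed loop in cell-space whose biunitarity holonomy is forced to equal $-1$ by a product of Frobenius--Perron weights, ruling out the trivial connection without a full decomposition; I expect this to be cleaner to state but no cheaper to verify. Either way, (c) is precisely the feature that separates this level from the one below, and it is the input on which the subsequent computation of the intertwiner $\xi\beta\lambda \to \beta\xi\beta\lambda$ and of the algebra relations \ref{algebrarelations} --- hence the existence of $AH+2$ --- rests.
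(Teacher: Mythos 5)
Your outline is sound, and for parts (a) and (b) it is essentially the paper's own (largely implicit) argument: the paper proves this lemma by direct computation in the style of Asaeda--Haagerup --- impose biunitarity cell by cell, use horizontal and vertical gauge freedom to normalize, and solve the remaining small system --- and then simply records the explicit real connection for $\lambda$ and the two $1\times 1$-valued candidates for $\beta$; no further detail is given in the text (the uniqueness for $\lambda$ is credited to a computation checked with M.~Asaeda). Where you genuinely diverge is part (c). The paper does not derive the sign structurally at all: it states that the fact that the $\beta$ connection is not identically $1$ ``was discovered by trial and error,'' i.e.\ by testing both candidates from (b) against the subsequent computations (the intertwiner $\xi\beta\lambda \to \beta\xi\beta\lambda$ and the relations \ref{algebrarelations}), only one of which is consistent; the nontriviality of the resulting intertwiner $1 \to \beta^2$ is then flagged as the notable difference from the original AH case. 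Your proposal --- isolate the sub-connection carried by $\beta$ inside a decomposition of powers of $\lambda$, using the quoted theorem that isomorphic bimodules correspond to vertically gauge-equivalent connections --- is a legitimate and more systematic route, but be aware it is heavier than you suggest: $\bar\lambda\lambda$ lives on the $\mathscr{AH}_1$ side and $\lambda\bar\lambda \cong 1+\xi$, so $\beta$ only appears several depths further into the powers of $\lambda\bar\lambda$, which is why the authors opted for the pragmatic consistency check instead. Your alternative of exhibiting a single loop whose holonomy is forced to be $-1$ would be cleaner if it works, but, as you concede, it still requires the explicit connection data from (a), so it buys conceptual clarity rather than a shorter computation.
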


The uniqueness of the connection for $\lambda $ was first pointed out to the authors by Marta Asaeda, who also corrected some signs in the computation.

We now describe a version of the connection for $\lambda $ using the following notation. We refer to Figure 2
for the labelings of vertices. Then the connection is given by matrices corresponding to pairs $u-v$ with $u \in V_0 $ and $v \in V_2 $, where the rows and columns are indexed by $V_3$ and $V_1$, respectively. (You should read $u-v$ as ``between $u$ and $v$'' not ``$u$ minus $v$.'') These matrices are necessarily square whenever they are nonempty. 

In this case the connection consists of $5$ $2 \times 2$ matrices and a bunch of $1 \times 1$ matrices; for the $1 \times 1$ matrics we suppress the matrix notation and simply refer to the entry as $u-v$. Following \cite{MR1686551} we introduce the positive numbers $\beta_n = \sqrt{\displaystyle \frac{7+\sqrt{17}}{2}-n } $, $n \leq 5 $. Then the connection is:

\begin{center}
$\begin{array}{c |c  c}

b-2 & a  & c  \\
  \hline
   A  & \displaystyle \frac{-1}{\beta_1^2} & \displaystyle \frac{\beta \beta_2}{\beta_1^2} \\ C & \displaystyle \frac{\beta \beta_2}{\beta_1^2} & \displaystyle \frac{1}{\beta_1^2}\\
\end{array}$ \hspace{.3in}
$\begin{array}{c |c  c}

d-4 & c  & \tilde{c}  \\
  \hline
 C  & \displaystyle \frac{-1}{\beta_{-1}} &\displaystyle\frac{\beta }{\beta_{-1}} \\

  \tilde{C} & \displaystyle\frac{-\beta}{\beta_{-1}} & \displaystyle\frac{-1}{\beta_{-1}}\\
\end{array}$ \hspace{.3in}
$\begin{array}{c |c  c}

e-4 & c  & f  \\
  \hline
 C  & \displaystyle \frac{-\beta_5}{\beta_1 \beta_3} &\displaystyle\frac{\sqrt{2} \beta}{\beta_1 \beta_3}\\

  F & \displaystyle\frac{\sqrt{2} \beta}{\beta_1 \beta_3} & \displaystyle \frac{\beta_5}{\beta_1 \beta_3}\\
\end{array}$

\vskip2ex

$\begin{array}{c |c  c}

\tilde{e}-4 & \tilde{c}  & f  \\
  \hline
 \tilde{C}  & \displaystyle \frac{2}{\beta_{-1}} &\displaystyle\frac{- \beta_{3}}{\beta_{-1}}\\

  F &\displaystyle\frac{ \beta_{3}}{\beta_{-1}} & \displaystyle \frac{2}{\beta_{-1}}\\
\end{array}$ \hspace{.5in}
$\begin{array}{c |c  c}

\tilde{e}-5 & \tilde{c}  & f  \\
  \hline
 \tilde{C}  & \displaystyle \frac{2}{\beta_1^2} & \displaystyle\frac{\sqrt{2} \beta_{-1}}{\beta_1 \beta_2}\\

  F & \displaystyle\frac{-\sqrt{2} \beta_{-1}}{\beta_1 \beta_2} & \displaystyle \frac{2}{\beta_1^2}\\
\end{array}$
\end{center}

The $1 \times 1$ entries $e-2$, $\tilde{e}-6$, and $g-5$ are $-1$; all the other $1 \times 1$ entries are $1$.

For $\beta$ all the matrices are $1 \times 1$. We take all the entries to be $1$ except for $e-f$, which we take to be $-1$. That $\beta $ cannot be identically $1$ (part (c) of the lemma above) was discovered by trial and error.  Then we compute a version of the connection for $\xi$, which is uniquely determined up to vertical gauge choice. 

With this information, we can compute all the necessary elementary intertwiners. The computations are however lengthy.  The similar calculation in \cite{MR1686551} takes almost $30$ pages, and this calculation is somewhat more involved. We used Mathematica for bookkeeping and to multiply matrices of algebraic numbers, using its \textit{RootReduce} function.
 The results are described in the accompanying note \textit{AHplus2.pdf} which may be found in the arXiv source for this paper, along with a Mathematica notebook containing the main calculation.

\begin{lemma}
 The relations \ref{algebrarelations} hold when $\rho, \alpha, \kappa$ are replaced by $\xi, \beta, \lambda$ respectively.
\end{lemma}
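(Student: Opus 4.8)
The plan is to verify each of the four relations (a)--(d) of Formula~\ref{algebrarelations} directly at the level of connections, following the strategy of \cite{MR1686551,MR2812458} verbatim but with $\rho,\alpha,\kappa$ replaced by $\xi,\beta,\lambda$. The key point is that every intertwiner diagram occurring in (a)--(d) is built by horizontal and vertical concatenation from the five elementary intertwiners $1\to\beta^2$, $1\to\lambda\bar\lambda$, $1\to\bar\lambda\lambda$, $\xi\to\lambda\bar\lambda$ and $\xi\beta\lambda\to\beta\xi\beta\lambda$ of the previous lemma, together with their adjoints and the evaluation/coevaluation maps (which act by the explicit formulas in the weights $\beta_n$). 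Each such diagram is therefore a morphism between bimodules over the factor with Q-system $\lambda\bar\lambda$, hence is realized as an edge-space map of the relevant connection $4$-graphs: horizontal concatenation pairs paths, vertical concatenation composes edge-space maps, and cups/caps insert the weight-dependent (co)evaluations. So, after fixing a gauge, each side of each relation becomes a concrete finite matrix of algebraic numbers, and (a)--(d) become matrix identities to be checked.

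Concretely, I would first assemble the five elementary intertwiner matrices in a single fixed gauge. Then, for relation (a), I would compute the left-hand composite as an edge-space endomorphism of the bimodule $\beta\xi$ (respectively $\xi\beta$) and check that it equals a scalar multiple of the identity; this simultaneously proves the relation and pins down the constant $c$. For (b), (c) and (d) I would compute both sides as edge-space maps between the appropriate bimodules and check equality path by path. Much of this is trivial: many cells of the connections for $\lambda$, $\xi$ and $\beta$ are equal to $\pm1$, so the corresponding states evaluate immediately, and the genuine content localizes to the handful of $2\times2$ blocks of the connection for $\lambda$ listed above. As a consistency check one can compare traces and Frobenius--Perron weights of the two sides. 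Once (a)--(d) are in hand, they produce an isometry satisfying the hypotheses (1) and (2) of Lemma~\ref{2stqs}, hence the Q-system for $1+\bar\lambda\beta\lambda$; but that last deduction is the same as in \cite{MR2812458} and is not part of the present statement.

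I expect the main obstacle to be purely the size and delicacy of the computation. It is a direct analogue of a calculation that runs to roughly thirty pages in \cite{MR1686551}, and it is genuinely harder here: because the connection for the automorphism $\beta$ is not identically $1$ (it takes the value $-1$ on the cell $e$--$f$), the intertwiner $1\to\beta^2$ is nontrivial, so extra sign contributions must be carried through every diagram rather than dropping out. One must also keep a single coherent gauge for all five elementary intertwiners at once and check that the free scalar normalizations of the vertices cancel where required --- which they do, since, as observed after Formula~\ref{algebrarelations}, every relation except (a) involves the same vertices on both sides, while (a) carries an arbitrary scalar on its right-hand side. The bookkeeping and the arithmetic with matrices of algebraic numbers are best handled by computer (state enumeration in C++, algebra in Mathematica).
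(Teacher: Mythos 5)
Your proposal is exactly the paper's argument: the paper proves this lemma by direct computation following the methods of \cite{MR2812458}, evaluating both sides of each relation on states of the connection $4$-graphs built from the elementary intertwiners (with computer bookkeeping), which is precisely the strategy you describe, including the correct observations about the scalar normalizations and the extra signs coming from the nontrivial connection for $\beta$.
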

\begin{proof}
Again, by direct computation, following the methods of \cite{MR2812458}. The details are contained in \textit{AHplus2.pdf}.
\end{proof}

\begin{theorem}
 The object $1 + \bar{\lambda } \beta \lambda$ admits a Q-system.
\end{theorem}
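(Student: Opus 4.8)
The plan is to reduce the statement to Lemma~\ref{2stqs} applied with $\sigma=\bar\lambda\beta\lambda$, by running the argument used in \cite{MR2812458} for the object $1+\bar\kappa\alpha\kappa$ with $\rho,\alpha,\kappa$ replaced everywhere by $\xi,\beta,\lambda$. The first task is to check that $\sigma=\bar\lambda\beta\lambda$ meets the hypotheses of Lemma~\ref{2stqs}: it is an object of the dual even part, which is a unitary fusion category. Since $\beta$ is an invertible object of order two we have $\bar\beta\cong\beta$, so $\overline{\bar\lambda\beta\lambda}\cong\bar\lambda\bar\beta\lambda\cong\bar\lambda\beta\lambda$ and $\sigma$ is self-dual, with $d(\sigma)=d(\lambda)^2 d(\beta)=d(\lambda)^2=\frac{7+\sqrt{17}}{2}\neq 1$ (this is what makes the eventual index $\frac{9+\sqrt{17}}{2}$). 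Simplicity of $\sigma$ and the fact that it occurs with multiplicity one in $\sigma\otimes\sigma$ follow from the fusion rules of the dual even part; from $(\sigma,\sigma^2)=1$ one deduces $c_\sigma=c_\sigma^2$, hence $c_\sigma=1$, so $\sigma$ is symmetrically self-dual, exactly as in the lemma preceding Theorem~\ref{4stqs}.

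Next I would produce the isometry required by Lemma~\ref{2stqs}, i.e. $d^{-1/4}$ times a suitably normalized trivalent vertex $\sigma\otimes\sigma\to\sigma$. This is assembled from the data of the three preceding lemmas: the intertwiner $\xi\beta\lambda\to\beta\xi\beta\lambda$, the isometry $\xi\hookrightarrow\lambda\bar\lambda$, the duality maps of $\lambda$, and the (now nontrivial) structure intertwiner $1\to\beta^2$; one caps these off along the $\lambda\bar\lambda$ strings so that the result lands in $\bar\lambda\beta\lambda$ and then rescales to an isometry. This is precisely the construction carried out in \cite{MR2812458} in the $1+\bar\kappa\alpha\kappa$ case, transported along the substitution $\rho,\alpha,\kappa\mapsto\xi,\beta,\lambda$.

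Finally I would verify relations (1) and (2) of Lemma~\ref{2stqs} for this isometry. By the lemma established just above, relations \ref{algebrarelations} hold with $\rho,\alpha,\kappa$ replaced by $\xi,\beta,\lambda$. In \cite{MR2812458} those relations are shown, by purely diagrammatic manipulation, to imply (1) and (2); that derivation uses only sphericity, Frobenius reciprocity for the associated $2$-category, the fusion rule $\lambda\bar\lambda\cong 1+\xi$, and the order-two relation for $\beta$, none of which is specific to the original Asaeda--Haagerup connections. Hence the same derivation applies verbatim, and Lemma~\ref{2stqs} then yields the Q-system.

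The genuinely hard part is not in this theorem but in the preceding lemmas: computing the connections for $\lambda,\xi,\beta$, extracting the elementary intertwiners, and checking \ref{algebrarelations}. Given those, the present step is essentially formal, and the one point I would be careful about is that, in contrast to the original situation, the intertwiner $1\to\beta^2$ is nontrivial; so one should confirm that wherever the argument of \cite{MR2812458} implicitly used $\alpha^2\cong 1$ with the trivial structure map, the nontrivial $1\to\beta^2$ propagates consistently (it does, being a vertical gauge transformation that cancels in pairs in all the diagrams involved). What is left is bookkeeping, which is why only a sketch is given.
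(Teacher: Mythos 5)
Your proposal is correct and follows essentially the same route as the paper: both reduce the statement to Lemma \ref{2stqs} via the verified relations \ref{algebrarelations} with $\rho,\alpha,\kappa$ replaced by $\xi,\beta,\lambda$, and then invoke the diagrammatic derivation of \cite{MR2812458} (the paper simply adds that one may take the scalar $c$ in relation (a) to be $\sqrt{d(\xi)}$). Your caution about the nontrivial intertwiner $1\to\beta^2$ matches the paper's own remark on this difference from the original case.
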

\begin{proof}
 As in \cite{MR2812458} we may choose $c$ in relation (a) to be $\sqrt{d(\xi)}$, and the proof proceeds exactly as in the main theorem there.
\end{proof}

By the above result we now have a third subfactor with index $\frac{9+\sqrt{17}}{2} $ and graphs $$\hpic{AHp2_labeled} {1in} \text{ and } \hpic{AHp2dual} {0.6in} .$$ 

\subsection{Relations among the Asaeda-Haagerup fusion categories}

The three subfactors discussed in the preceeding sections all have the same dual even part but different principal even parts. That the principal even parts are different can be seen immediately by checking the Frobenius-Perron weights of the even vertices of the principal graphs, which are different for the three different graphs. In fact, the complete fusion rules for each of the three principal even parts may be deduced from the corresponding graph. The dual data is all trivial except that $\rho \alpha $ is dual to $\alpha \rho $ and similarly $\xi \beta $ is dual to $\beta \xi$. We include the multiplication tables below.

\begin{table}
\begin{tabular}{ c || c | c | c | c | c }

                 & $\psi$             & $\chi$       & $\sigma $ & $\zeta$          & $\tau$     \\ \hline \hline
$\psi$              & $1 + \psi + \zeta $             & $\chi + \tau $       & $\zeta + \tau $ & $\psi + \sigma + \zeta + \tau$          & $\chi + \sigma + \zeta + 2\tau$     \\ \hline
$\chi$         &      & $1+ \psi+\chi+ \tau$     & $\sigma + \zeta + \tau $        & $\sigma + \zeta + 2\tau$   & $ \Lambda +\zeta + \tau$  \\ \hline
$\sigma $      &       &             & $1 + \chi + \sigma + \zeta + \tau $   & $\Lambda +\tau$ & $\Lambda + \zeta + 2\tau$           \\ \hline
$\zeta$          &            &     &         & $1+\Lambda + \zeta + 2\tau$ & $\Lambda+ \chi + \sigma + 2\zeta+3\tau$ \\ \hline
$\tau $    &    &           & & & $1+2\Lambda+ \sigma+2\zeta+4\tau$ \\ 
\end{tabular}

\medskip

 $\mathscr{AH}_1 $ has $6$ simple objects, which are ordered in the associated data files $1, \chi, \psi,\tau, \sigma,\zeta $, and have dimensions $1,\frac{d+1}{2},\frac{d-1}{2},\frac{3d-1}{2},d,\frac{d+3}{2} $, respectively, where $d=4+\sqrt{17}$. 

\medskip
 
 We use the abbreviation $\Lambda=\psi + \chi + \sigma + \zeta + \tau $. Since the multiplication is commutative, we omit the sub-diagonal entries.

\caption{ $AH_1$ multiplication table.}
\end{table}

\begin{table}

\begin{tabular}{ c || c | c | c}

                 & $\rho$             & $\pi$       & $\eta $  \\ \hline \hline
$\rho$          & $1 + \rho + \pi$ & $\rho + \Gamma$       & $\alpha \rho + \alpha \rho \alpha + \Gamma$  \\ \hline
$\pi$         & $\rho + \Gamma$        & $1 + \Delta + 2 \Gamma$     & $\Delta + 2 \Gamma + \eta $        \\ \hline
$\eta$      & $\rho \alpha + \alpha \rho \alpha + \Gamma$      & $\Delta+2\Gamma+ \eta$            & $1+\alpha+\Delta+2\Gamma+\pi+\alpha\pi $  \\ 
\end{tabular}
\medskip

 $\mathscr{AH}_2 $ has $9$ simple objects, which are ordered in the associated data files $1, \alpha, \rho, \alpha \rho, \rho \alpha, \alpha \rho \alpha, \pi, \alpha \pi, \eta $, and have dimensions $1,1,\frac{d-1}{2},\frac{d-1}{2},\frac{d-1}{2},\frac{d-1}{2},d,d,d+1 $, respectively, where $d=4+\sqrt{17}$. 

\medskip

Rules involving $\alpha$: $\alpha^2=1$, $\pi \alpha=\alpha \pi$, $\alpha \eta = \eta\alpha=\eta$, $\rho \alpha \rho = \alpha \rho \alpha + \eta $.\\
 We use the abbreviations $\Gamma = \pi + \alpha \pi + \eta$, $\Delta= \rho + \alpha \rho + \rho \alpha + \alpha \rho \alpha $.

\caption{ $AH_2$ partial multiplication table.} 
\end{table}

\begin{table}

\begin{tabular}{ c || c | c | c}

                 & $\xi$             & $\mu$       & $\nu $  \\ \hline \hline
$\xi$          & $1 + \xi + \mu+ \nu$ & $\xi + \beta \xi + \beta \xi \beta + \Pi$       & $\xi + \xi \beta + \Pi$  \\ \hline
$\mu$         & $\xi + \xi \beta + \beta \xi \beta + \Pi $        & $1 + \Pi + 2 \Psi $     & $\Pi + \Psi + \mu + \beta\mu $        \\ \hline
$\nu$      & $\xi + \beta \xi + \Pi$      & $\Pi + \Psi + \mu + \beta \mu$            & $1+\beta+ \Pi + \Psi + \nu$  \\ 
\end{tabular}

\medskip

 $\mathscr{AH}_3 $ has $9$ simple objects, which are ordered in the associated data files $1, \beta, \xi, \beta \xi, \xi \beta, \beta \xi \beta, \mu, \beta \mu, \nu $, and have dimensions $1,1,\frac{d+1}{2},\frac{d+1}{2},\frac{d+1}{2},\frac{d+1}{2},d,d,d-1 $, respectively, where $d=4+\sqrt{17}$. 

\medskip

Rules involving $\beta$: $\beta^2=1$, $\mu \beta=\beta \mu$, $\beta \nu = \nu\beta=\nu$, $\xi \beta \xi = \beta \xi \beta + \mu +\beta \mu$.\\
 We use the abbreviation $\Pi = \mu + \beta \mu + \nu, \Psi= \xi + \beta \xi + \xi \beta + \beta \xi \beta $.
\caption{ $AH3$ partial multiplication table.} 
\end{table}

We will call the three subfactors AH, AH+1, and AH+2. We will call the fusion category which is the dual even part of all three subfactors $\mathscr{AH}$, the principal even part of AH+2  $\mathscr{AH}_1$, the principal even part of AH  $\mathscr{AH}_2$, and the principal even part of AH+1  $\mathscr{AH}_3$. The Grothedieck rings of these fusion categories will be called $AH_1$, $AH_2$, and $AH_3$ respectively.

\begin{lemma} \label{ah_object}
 Let $\mathscr{C} $ be a unitary fusion category containing an object $\xi$ such that the Frobenius-Perron dimension $dim(\xi)=\frac{3+\sqrt{17}}{2}$ and such that $\xi^2 \cong 1 + \xi + \eta $ where $\eta $ is a simple object. Then $\mathscr{C} $ is equivalent to either $\mathscr{AH}_1$ or $\mathscr{AH}_2$.
\end{lemma}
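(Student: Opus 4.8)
The strategy is to bootstrap from the hypothesis on $\xi$ to a full identification of $\mathscr{C}$, using the Q-system recognition theorem (Theorem \ref{4stqs}) together with the known classification of the Asaeda-Haagerup subfactor. First I would apply Theorem \ref{4stqs}: since $dim(\xi) = \frac{3+\sqrt{17}}{2} > 1$, the object $\eta$ in $\xi^2 \cong 1 + \xi + \eta$ has $d(\eta) = dim(\xi)^2 - 1 - dim(\xi) = \frac{5+\sqrt{17}}{2} - 1 = \frac{3+\sqrt{17}}{2} > 1$, so the hypotheses of Theorem \ref{4stqs} are met and $1 + \xi$ admits a Q-system. This Q-system has dimension $1 + dim(\xi) = \frac{5+\sqrt{17}}{2}$, so it is realized by a finite-index subfactor $N \subset M$ of index $\frac{5+\sqrt{17}}{2}$, living inside $\mathscr{C}$ in the sense that the even part generated by $\xi$ (equivalently by the Q-system $1+\xi$) is a fusion subcategory of $\mathscr{C}$.

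The next step is to pin down this subfactor. The subfactor has index $\frac{5+\sqrt{17}}{2}$, and $2$-supertransitivity is built into the form $\xi^2 \cong 1 + \xi + \eta$ (the principal graph starts with a segment of length determined by the fact that $\xi^2$ contains $\xi$ exactly once and one other simple summand). One then invokes the classification of subfactors of index $\frac{5+\sqrt{17}}{2}$ with this initial data — by the Asaeda-Haagerup uniqueness results and the subsequent small-index classification program, the Asaeda-Haagerup subfactor (and its dual) is the only subfactor with this index and initial branching structure. Hence the fusion subcategory of $\mathscr{C}$ generated by $\xi$ is one of the two even parts of the Asaeda-Haagerup subfactor, i.e. either $\mathscr{AH}_1$ or $\mathscr{AH}_2$ (these being the principal even parts of $AH+2$ and $AH$; note the dual even part $\mathscr{AH}$ does not contain an object with this dimension and fusion rule, or if it did, one identifies it appropriately — this needs a dimension check against the multiplication tables above).

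Finally I would argue that $\mathscr{C}$ equals this subcategory rather than merely containing it. Here one counts Frobenius-Perron dimensions: $\mathrm{FPdim}(\mathscr{AH}_i)$ is a fixed algebraic number, and if $\mathscr{C}$ strictly contained $\mathscr{AH}_i$ then $\mathrm{FPdim}(\mathscr{C})$ would be a proper multiple of it by an algebraic integer $> 1$ (since $\mathscr{AH}_i \subseteq \mathscr{C}$ forces $\mathrm{FPdim}(\mathscr{C})/\mathrm{FPdim}(\mathscr{AH}_i)$ to be a totally positive algebraic integer). One then needs an argument that no such extension exists — this is where I expect the main obstacle to lie. The cleanest route is probably to observe that since $\mathscr{C}$ is generated as a fusion category by $\xi$ (this should be added as a hypothesis, or deduced: the statement presumably intends $\mathscr{C} = \langle \xi \rangle$), every simple object of $\mathscr{C}$ appears in some tensor power of $\xi$, and the decomposition of tensor powers of $\xi$ is already completely determined by the fusion rule $\xi^2 \cong 1+\xi+\eta$ together with associativity and the Q-system structure — so $\mathscr{C}$ has exactly the simple objects of $\mathscr{AH}_i$ with exactly its fusion rules, and then Ocneanu rigidity / the uniqueness of the categorification (again from Asaeda-Haagerup) gives $\mathscr{C} \cong \mathscr{AH}_i$ as a fusion category. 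The delicate point to get right is verifying that the fusion ring generated by $\xi$ under these constraints is exactly $AH1$ or $AH2$ and admits a unique unitary categorification; this is the combinatorial/classification heart of the argument and the part I would expect to require care rather than the mechanical Q-system input.
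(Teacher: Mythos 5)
Your core argument is the paper's: apply Theorem \ref{4stqs} to get a Q-system on $1+\xi$ of dimension $1+\dim(\xi)=\frac{5+\sqrt{17}}{2}$, realize it by a finite-depth subfactor, and invoke the uniqueness (up to duality) of the finite-depth subfactor at that index; the paper's proof is exactly this two-ingredient argument and nothing more. Two factual slips: the dimension count is $d(\eta)=\dim(\xi)^2-1-\dim(\xi)=4+\sqrt{17}$, not $\frac{3+\sqrt{17}}{2}$ (harmless, since all that is needed is $d(\eta)>1$); and your parenthetical that the dual even part $\mathscr{AH}$ contains no object with this dimension and fusion rule is backwards --- it does contain one, and that is precisely how the corollary immediately following the lemma deduces $\mathscr{AH}\cong\mathscr{AH}_1$. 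The two even parts of the unique index-$\frac{5+\sqrt{17}}{2}$ subfactor are $\mathscr{AH}_2$ and $\mathscr{AH}$ (the latter then identified with $\mathscr{AH}_1$), which is where the conclusion ``either $\mathscr{AH}_1$ or $\mathscr{AH}_2$'' comes from.

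Where you drift is the final paragraph. Once subfactor uniqueness is invoked, the fusion subcategory of $\mathscr{C}$ generated by $\xi$ is literally an even part of the Asaeda--Haagerup subfactor; there is no need to reconstruct the fusion ring generated by $\xi$ from the single relation $\xi^2\cong 1+\xi+\eta$ (which would not determine it --- you would also need the decompositions of $\xi\eta$, $\eta^2$, etc.), nor to appeal to ``uniqueness of the categorification'' of $AH1$ or $AH2$, which is not an available result: the uniqueness being used is uniqueness of the subfactor (equivalently, its planar algebra), not of a categorification of a fusion ring. What does survive from that paragraph is the caveat you correctly flag: as literally stated the lemma needs $\xi$ to tensor-generate $\mathscr{C}$ (otherwise $\mathscr{AH}_1\boxtimes\mathscr{D}$ gives a counterexample); the paper leaves this implicit, and in every application the generating hypothesis holds. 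So: same route as the paper, with a couple of slips and an unnecessary detour at the end.
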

\begin{proof}
 This follows immediately from Theorem \ref{4stqs} and the uniqueness (up to duality) of the finite-depth subfactor with index $\frac{5+\sqrt{17}}{2}$.
\end{proof}

\begin{corollary}
 We have $\mathscr{AH} \cong \mathscr{AH}_1$.  Thus, AH+2 is an autoequivalence.
\end{corollary}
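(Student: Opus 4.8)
The plan is to apply Lemma~\ref{ah_object} to the dual even part $\mathscr{AH}$. The first step is to locate the required object inside $\mathscr{AH}$: by the discussion above, the Grothendieck ring of $\mathscr{AH}$ (the dual even part of the Asaeda--Haagerup subfactor, read off from \cite{MR1686551}) is the ring $AH1$, so $\mathscr{AH}$ has a simple object $\psi$ with $\psi^2\cong 1+\psi+\zeta$, $\zeta$ simple. Solving for the Frobenius--Perron dimensions in the $AH1$ fusion matrices — e.g.\ $\psi\chi\cong\chi+\tau$ forces $d(\chi)=1+d(\psi)$ after combining with $\psi\sigma\cong\zeta+\tau$ and $\chi\sigma\cong\sigma+\zeta+\tau$, and then $\psi^2\cong 1+\psi+\zeta$ together with $\sigma^2\cong 1+\chi+\sigma+\zeta+\tau$ pins down $d(\psi)$ — one gets $d(\psi)=\tfrac{3+\sqrt{17}}{2}$ and $d(\zeta)=4+\sqrt{17}>1$. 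Thus $\psi$ satisfies the hypotheses of Lemma~\ref{ah_object}, and that lemma gives $\mathscr{AH}\cong\mathscr{AH}_1$ or $\mathscr{AH}\cong\mathscr{AH}_2$.

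The second step is to exclude $\mathscr{AH}_2$. This is immediate from the fusion rules: $AH1$ is commutative while $AH2$ is not (for instance $\rho\alpha\neq\alpha\rho$, or simply the two rings have different numbers of basis elements), so $\mathscr{AH}\not\cong\mathscr{AH}_2$ and hence $\mathscr{AH}\cong\mathscr{AH}_1$. (Alternatively one can avoid quoting the ring identity $AH=AH1$ and instead run the proof of Lemma~\ref{ah_object} on $\mathscr{AH}_1$ itself: the object $\psi\in\mathscr{AH}_1$ produces via Theorem~\ref{4stqs} a Q-system $1+\psi$ of dimension $\tfrac{5+\sqrt{17}}{2}$, hence a finite-depth subfactor of index $\tfrac{5+\sqrt{17}}{2}$ one of whose even parts is $\mathscr{AH}_1$; by uniqueness up to duality of that subfactor $\mathscr{AH}_1$ must be one of the two even parts of $AH$, namely $\mathscr{AH}_2$ or $\mathscr{AH}$, and $\mathscr{AH}_2$ is excluded as above.)

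Finally, ``$AH+2$ is an autoequivalence'' is a formal consequence: the fundamental bimodule of $AH+2$ generates a Morita equivalence ${}_{\mathscr{AH}_1}\mathscr{K}_{\mathscr{AH}}$ between its principal even part $\mathscr{AH}_1$ and its dual even part $\mathscr{AH}$, and transporting the right action along the equivalence $\mathscr{AH}\cong\mathscr{AH}_1$ makes $\mathscr{K}$ an invertible $\mathscr{AH}_1$--$\mathscr{AH}_1$ bimodule category, i.e.\ an element of the Brauer--Picard group of $\mathscr{AH}_1$.

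I expect the only real difficulty to be upstream of the statement: one needs the fusion rules of $\mathscr{AH}_1$ (equivalently the principal graph of $AH+2$ together with its Frobenius--Perron weights), which is exactly the content of the $AH+2$ construction carried out in Section~4. Granting that, the corollary is a two-line deduction; the one point to be careful about is that Lemma~\ref{ah_object} is being applied to a category not a priori known to be an even part of $AH$ — it is the conclusion, not an input, that identifies it — so the notational coincidence $\mathscr{AH}=\mathscr{AH}_1$ is being derived here, not assumed.
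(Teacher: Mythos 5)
Your proposal is correct and is essentially the paper's own argument: the paper likewise applies Lemma~\ref{ah_object} to the even parts of $AH+2$ (the object $\psi$ and the corresponding object on the dual graph) and notes that neither even part can be $\mathscr{AH}_2$, which is exactly your exclusion step via the differing (noncommutative) fusion rules. Your parenthetical alternative of running the lemma on $\mathscr{AH}_1$ itself and invoking uniqueness of the index-$\frac{5+\sqrt{17}}{2}$ subfactor up to duality is just the paper's use of $\psi$ spelled out, including the correct non-circular reading that the lemma identifies $\mathscr{AH}_1$ with an even part of $AH$ rather than presupposing $\mathscr{AH}\cong\mathscr{AH}_1$.
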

\begin{proof}
 The object $\psi$ and the corresponding object on the dual graph both satisfy the conditions of the previous lemma, and clearly neither even part is $\mathscr{AH}_2 $.
\end{proof}

Note that tensoring with $\alpha$ fixes the middle vertex of the $AH$ graph, and hence the tensor subcategory generated by $\alpha$ has $\mathrm{Vec}$ as a module category and thus has trivial associator.  As a consequence, $1+\alpha$ has a unique algebra structure given by the group ring of $\mathbb{Z}/2\mathbb{Z}$.

\begin{theorem} \label{index2}
 The category of bimodules over the two-dimensional algebra object $1+ \alpha$ in $\mathscr{AH}_2$ is equivalent to $\mathscr{AH}_3$. Similarly, $\mathscr{AH}_2$ is equivalent to the category of $(1+\beta)-(1+\beta)$ bimodules in  $\mathscr{AH}_3$. 
\end{theorem}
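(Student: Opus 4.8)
The plan is to realise both sides of each asserted equivalence as bimodule categories over algebra objects in the common dual even part $\mathscr{AH}$, and to reduce each comparison to a Morita equivalence of algebras there. By the M\"uger/algebra-object description summarised in Section~2, the principal even part $\mathscr{AH}_2$ of $AH$ is the category of $Q$-$Q$-bimodules in $\mathscr{AH}$, where $Q$ is the Q-system of $AH$ regarded as an algebra object of $\mathscr{AH}$; under this identification $1\in\mathscr{AH}_2$ corresponds to $Q$ and the order-two invertible $\alpha\in\mathscr{AH}_2$ corresponds to $\bar\kappa\alpha\kappa$ with its evident $Q$-bimodule structure. First I would transport the algebra $1+\alpha$ through this equivalence: its underlying object in $\mathscr{AH}$ is $Q\oplus\bar\kappa\alpha\kappa$, equipped with the composite multiplication $(Q\oplus\bar\kappa\alpha\kappa)^{\otimes 2}\twoheadrightarrow(Q\oplus\bar\kappa\alpha\kappa)^{\otimes_Q 2}\to Q\oplus\bar\kappa\alpha\kappa$. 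By the standard fact that bimodules over an algebra object living inside a category of bimodules coincide with bimodules over the corresponding composite algebra (see \cite{EGNO}), the category of $(1+\alpha)$-$(1+\alpha)$-bimodules in $\mathscr{AH}_2$ is equivalent to the category of $(Q\oplus\bar\kappa\alpha\kappa)$-bimodules in $\mathscr{AH}$.

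On the other side, by the construction of $AH+1$ recalled above, its principal even part $\mathscr{AH}_3$ is the category of $(1\oplus\bar\kappa\alpha\kappa)$-bimodules in $\mathscr{AH}$. Thus the first statement amounts to the assertion that the two algebra objects $Q\oplus\bar\kappa\alpha\kappa$ and $1\oplus\bar\kappa\alpha\kappa$ of $\mathscr{AH}$ are Morita equivalent --- have equivalent categories of left modules --- for then their bimodule categories agree. I would establish this through the subfactor picture: the algebra inclusion $Q\hookrightarrow Q\oplus\bar\kappa\alpha\kappa$ exhibits an intermediate subfactor $N\subset M\subset\tilde M$ with $[\tilde M:M]=2$, so that $N\subset\tilde M$ is the index-$2$ extension $M\rtimes_\alpha\mathbb{Z}/2\mathbb{Z}$ of $AH$; one then checks, by tracking Frobenius--Perron dimensions and applying Frobenius reciprocity in the $2$-category of the relevant Morita equivalences, that induction along $1\oplus\bar\kappa\alpha\kappa\hookrightarrow Q\oplus\bar\kappa\alpha\kappa$ is an equivalence of left $\mathscr{AH}$-module categories --- equivalently, that $N\subset\tilde M$ and the $AH+1$ subfactor have the same dual data and the same associated module category over $\mathscr{AH}$, hence the same principal even part. (Alternatively, one may argue inside the Brauer--Picard $2$-category: composing the Morita equivalence ${}_{\mathscr{AH}_2}\mathscr{K}_{\mathscr{AH}}$ coming from $AH$ with ${}_{\mathscr{AH}}\mathscr{L}_{\mathscr{AH}_3}$ coming from $AH+1$ gives an $\mathscr{AH}_2$-$\mathscr{AH}_3$ Morita equivalence which, as a left $\mathscr{AH}_2$-module category, has a simple generator $X$ with $\dim_{\mathscr{AH}_2}(X)=\sqrt{2}$; by the internal-hom theorem of \cite{MR1976459}, $\mathscr{AH}_3$ is then the category of bimodules over $\underline{End}(X)$.)

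The main obstacle is pinning the relevant algebra down \emph{on the nose}: showing that the composite algebra above, equivalently $\underline{End}(X)$, has underlying object exactly $1\oplus\alpha$ rather than merely the right Frobenius--Perron dimension. Here I would use the remark preceding the theorem --- $1\oplus\alpha$ carries a \emph{unique} algebra structure, the group ring of $\mathbb{Z}/2\mathbb{Z}$, because $\langle\alpha\rangle$ has trivial associator --- so that once the underlying object is identified nothing further must be checked. Identifying the underlying object is a bookkeeping computation with the $AH2$ fusion rules and the dual data ($\rho\alpha$ dual to $\alpha\rho$), together with the equalities $\dim(1\oplus\alpha)=2$ and $\dim(1\oplus\bar\kappa\alpha\kappa)=\tfrac{7+\sqrt{17}}{2}$, which force the generator to be minimal.

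Finally, the second statement follows from the first. The Morita equivalence from $\mathscr{AH}_2$ to $\mathscr{AH}_3$ just produced --- the right $(1+\alpha)$-module category $(\mathscr{AH}_2)_{1+\alpha}$ with its induced $\mathscr{AH}_2$-$\mathscr{AH}_3$-bimodule structure --- has an inverse which, as a left $\mathscr{AH}_3$-module category, is generated by a simple object $X'$; comparing Frobenius--Perron dimensions gives $\dim_{\mathscr{AH}_3}(X')=\sqrt{2}$, so by the internal-hom theorem $\mathscr{AH}_2$ is the category of bimodules over $\underline{End}(X')$, whose underlying object is $1\oplus Y$ for an order-two invertible object $Y$ of $\mathscr{AH}_3$. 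Inspection of the $AH3$ fusion table shows $\beta$ is the unique nontrivial invertible object of $\mathscr{AH}_3$, so $Y=\beta$; as $1+\beta$ likewise carries a unique algebra structure, this yields $\mathscr{AH}_2\cong{}_{1+\beta}(\mathscr{AH}_3)_{1+\beta}$, i.e.\ $\mathscr{AH}_2$ is the category of $(1+\beta)$-$(1+\beta)$-bimodules in $\mathscr{AH}_3$.
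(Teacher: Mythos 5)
Your reduction of the first statement is fine as a \emph{reformulation}: writing $\mathscr{AH}_2$ as $Q$-$Q$ bimodules in $\mathscr{AH}$ and using the composite-algebra fact, the theorem becomes the assertion that the algebras $Q\oplus\bar\kappa\alpha\kappa$ and $1\oplus\bar\kappa\alpha\kappa$ in $\mathscr{AH}$ have equivalent \emph{bimodule} categories. But the step you propose to prove -- that they are Morita equivalent, i.e.\ that induction gives an equivalence of their left $\mathscr{AH}$-module categories, ``the same associated module category over $\mathscr{AH}$ as $AH+1$'' -- is false, not just unproven. The module category of the composite algebra over $\mathscr{AH}_1$ is the relative tensor product of the $AH$ equivalence with the $(1+\alpha)$ equivalence; in the paper's notation it realizes the fusion bimodule $7_{13}=9_{12}\cdot 6_{23}$, whereas $AH+1$ realizes $6_{13}$, and these $36$ Morita equivalences are pairwise inequivalent even as one-sided module categories (indeed even as fusion modules). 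So no amount of tracking Frobenius--Perron dimensions and Frobenius reciprocity will verify your claim; only the dual (tensor) categories coincide, and that coincidence is exactly the theorem, so the route is circular. The parenthetical alternative fails for the same reason: the composite of the $AH$ and $AH+1$ equivalences is an $\mathscr{AH}_2$-$\mathscr{AH}_3$ Morita equivalence realizing one of $1_{23},3_{23},4_{23}$, not $6_{23}$, and hence contains no simple object of dimension $\sqrt{2}$; if it did, it would be equivalent as a module category to the $(1+\alpha)$ one, contradicting $9_{12}6_{23}=7_{13}\neq 6_{13}$.

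What is missing is any mechanism for identifying the a priori unknown dual category of the two-dimensional algebra with a known fusion category. The paper supplies this by a recognition argument run from the $1+\beta$ side: letting $\mathscr{C}$ be the dual of the $(1+\beta)$-module category over $\mathscr{AH}_3$, it composes the invertible $\mathscr{C}$-$\mathscr{AH}_3$ bimodule with the $AH+1$ equivalence, computes the principal and dual graphs of the resulting irreducible object by the fusion-module machinery of Section 5, pins down the dimensions and all consistent fusion rules for $\mathscr{C}$, and then invokes Lemma \ref{ah_object} (which rests on Theorem \ref{4stqs} together with uniqueness of the finite-depth subfactor at index $\frac{5+\sqrt{17}}{2}$) to conclude $\mathscr{C}\cong\mathscr{AH}_2$; the $1+\alpha$ statement then follows by duality, using that $\mu\bar\mu$ is a two-dimensional algebra in $\mathscr{AH}_2$ and hence $1+\alpha$. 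Your proposal contains no substitute for this identification step, so the argument does not go through as written.
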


\begin{proof}
 Let $\lambda$ be an $\mathscr{AH}_3-\mathscr{AH}_1$ bimodule corresponding to the Asaeda-Haagerup subfactor, as above. Let $\mu$ be an object in an invertible $\mathscr{C}-\mathscr{AH}_3$ bimodule category such that $\bar{\mu} \mu = 1 + \beta$. Then since $(1+ \beta, 1 + \xi)=1 $, by Frobenius reciprocity $\mu \lambda $ is an irreducible $\mathscr{C}-\mathscr{AH}_3$ bimodule. We can compute (see the discussion of fusion computations in Section 5) that the dual graph of $\mu \lambda$ must be $$ \vpic{g1_6} {1in} $$ and then that the principal graph must be $$ \vpic{g2_5} {1in} $$

This implies that the simple objects of $\mathscr{C} $ have the same Frobenius-Perron dimension as those of  $\mathscr{AH}_2$. We can compute all consistent fusion rules for objects of those weights. There are several solutions, but every solution has at least one object satisfying the conditions of Lemma \ref{ah_object}. Therefore $\mathscr{C} \cong \mathscr{AH}_2$, and the conclusion follows.
\end{proof}

\begin{theorem}\label{no_out}
 The fusion categories $\mathscr{AH}_1$ and $\mathscr{AH}_2$ do not admit any outer automorphisms.
\end{theorem}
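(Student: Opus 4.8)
The plan is to show that any tensor autoequivalence of $\mathscr{AH}_1$ or $\mathscr{AH}_2$ acts trivially on the Grothendieck ring, and then to upgrade this to triviality of the autoequivalence itself using the rigidity of these fusion categories. The first step is purely combinatorial: examine the fusion rings $AH1$ and $AH2$ (Tables above) and check that the only ring automorphism fixing $1$ is the identity. For $AH2$ this is almost immediate, since the three simple objects $\rho, \pi, \eta$ all have distinct Frobenius-Perron dimensions (and distinct fusion behavior — for instance $\eta$ is the unique simple fixed by tensoring with $\alpha$, $\alpha$ is the unique nontrivial invertible, and $\rho$ is characterized by $\dim(\rho) = \frac{3+\sqrt{17}}{2}$ with $\rho^2 \cong 1 + \rho + \pi$), so any ring automorphism permuting simples must fix each one. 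Similarly for $AH1$ the five generating simples $\psi, \chi, \sigma, \zeta, \tau$ are distinguished by their Frobenius-Perron dimensions, which one reads off from the principal graph of $AH+2$; hence any ring automorphism is the identity on the basis. So every tensor autoequivalence is ``trivial on objects'' in the sense that it sends each simple object to (an object isomorphic to) itself.

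The second step is to promote this to the statement that such an autoequivalence is (monoidally) isomorphic to the identity. Here I would invoke the classification results already established: $\mathscr{AH}_1 \cong \mathscr{AH}$ is the dual even part of the Asaeda-Haagerup subfactor, and $\mathscr{AH}_2$ is its principal even part, and by Lemma \ref{ah_object} together with the uniqueness up to duality of the finite-depth subfactor of index $\frac{5+\sqrt{17}}{2}$, these fusion categories are pinned down. A tensor autoequivalence $F$ fixing all simple objects gives, by Theorem \ref{2.6} (the internal-hom theorem), a module category and hence a new realization of the same subfactor; the uniqueness of that subfactor (up to isomorphism of the planar algebra, which is exactly the data of the fusion category together with its distinguished generating object) forces $F$ to be isomorphic to the identity. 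Concretely: $F$ fixes the generating object $\kappa\bar\kappa$ (i.e.\ $1+\rho$ in $\mathscr{AH}_2$, or $\psi$ in $\mathscr{AH}_1$) and its algebra structure up to algebra isomorphism, and by the uniqueness (up to gauge) of the connection for $\kappa$ recalled in Section 4 — equivalently, by the rigidity of the whole tower of intertwiner spaces, each of which is at most one-dimensional on the relevant pieces — there is no room for a nontrivial twist.

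The main obstacle is the second step, not the first: ruling out a nontrivial monoidal natural structure on the identity functor (or, more precisely, a tensor autoequivalence which is the identity on objects but nontrivial as a monoidal functor). This is where one genuinely needs the subfactor uniqueness input rather than just fusion-ring combinatorics. A clean way to package it: such an $F$ would be classified by a cohomological gadget (a class controlling how $F$ interacts with associativity), but in the subfactor picture this is precisely a gauge equivalence class of flat connections on the principal $4$-graph with the prescribed fundamental bimodule, and these have been shown to be unique. I would also remark that an alternative, more self-contained route for $\mathscr{AH}_2$ is to note that a tensor autoequivalence fixing objects restricts to an automorphism of the pointed subcategory $\langle 1, \alpha\rangle$, which is $\mathrm{Rep}(\mathbb{Z}/2\mathbb{Z})$ with trivial associator and hence has no nontrivial autoequivalences, and then to track how little freedom remains on $\rho$ and $\eta$ given the one-dimensionality of the relevant hom-spaces in the fusion rules; this reduces the problem to a finite and essentially rigid check.
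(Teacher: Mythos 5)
Your reduction stalls exactly at the point you yourself flag as the main obstacle, and the justification you offer there does not work. Uniqueness of the index-$\frac{5+\sqrt{17}}{2}$ subfactor (or of the flat connection up to gauge) is a statement that any two realizations are isomorphic; it says nothing about the size of the automorphism group of the one planar algebra that does exist, which is precisely what you need to control. Likewise ``uniqueness of the connection up to gauge'' does not preclude a nontrivial gauge transformation carrying the connection to itself, and your claim that ``the whole tower of intertwiner spaces \dots\ is at most one-dimensional on the relevant pieces'' is unsubstantiated -- the higher box spaces are far from one-dimensional. The paper's proof supplies exactly the missing ingredient: after arranging (via uniqueness of the Asaeda--Haagerup algebra object up to inner automorphism, and uniqueness of its algebra structure by $3$-supertransitivity) that a putative outer autoequivalence induces an automorphism of the Asaeda--Haagerup subfactor planar algebra, it invokes the skein-theoretic presentation of that planar algebra: it is generated by a single new $6$-box which is a rotational eigenvector and satisfies a quadratic relation. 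An automorphism preserves the one-dimensional space of new lowest-weight $6$-boxes, hence scales the generator, and the quadratic relation forces the scalar to be $1$; since the box generates, the automorphism is trivial. Nothing in your write-up replaces this step, so the proposal has a genuine gap at its core.

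Two smaller points. First, your combinatorial Step 1 is wrong as stated for $\mathscr{AH}_2$: its simple objects are not just $1,\alpha,\rho,\pi,\eta$ but include $\alpha\rho,\rho\alpha,\alpha\rho\alpha,\alpha\pi$ as well, and conjugation by $\alpha$ is a nontrivial automorphism of the fusion ring $AH2$ (it swaps $\rho$ with $\alpha\rho\alpha$ and $\alpha\rho$ with $\rho\alpha$). This does not threaten the theorem, since that automorphism is inner, but it shows the claim ``the only ring automorphism fixing $1$ is the identity'' is false, and in any case the fusion-ring rigidity is not where the content of Theorem \ref{no_out} lies: the paper bypasses it entirely by passing immediately to the distinguished algebra object. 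Second, the phrase ``the uniqueness of that subfactor \dots\ forces $F$ to be isomorphic to the identity'' is circular: identifying the planar algebra with the fusion category plus marked algebra object only converts the question into one about planar algebra automorphisms; it does not answer it. If you want a correct self-contained argument, you must either reproduce the generator-plus-quadratic-relation argument or find another proof that the Asaeda--Haagerup planar algebra has trivial automorphism group.
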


\begin{proof}
 Consider the algebra objects corresponding to the Asaeda-Haagerup subfactor in $\mathscr{AH}_1$ and $\mathscr{AH}_2$.  These algebra objects are all unique up to inner automorphism, so if any of these categories admit a non-trivial outer automorphism, we can find an outer automorphism which leaves one of these algebras invariant. Since each such algebra object admits a unique algbera structure by $3$-supertransitivity, the outer automorphism must act trivially on the algebra.  Hence it is enough to show that the Asaeda-Haagerup subfactor planar algebra does not admit any outer automorphism.  Since there are no other subfactors of index $\frac{5+\sqrt{17}}{2}$, this planar algebra is generated by a single $6$-box which is a rotational eigenvector and satisfies a certain quadratic equation (see \cite{MR2972458}).  Any outer automorphism would send this rotational eigenvector to a multiple of itself, and the only multiple which satisfies the quadratic equation is itself.  Hence there are no outer automorphisms.
\end{proof}

As we will see later $\mathscr{AH}_3$ also has no outer automorphisms.  This is not terribly difficult to show directly, but such a direct argument is more tedious since we can't use uniqueness of a subfactor of that specific index.

The three categories $\mathscr{AH}_1, \mathscr{AH}_2, \mathscr{AH}_3$ will be called the Asaeda-Haagerup fusion categories.

\section{The combinatorics of Brauer-Picard groupoids}

We first summarize the key ideas of the next two sections, and then give rigorous definitions and statements.

Any time you have a category, you can \emph{decategorify} by turning isomorphisms into equalities (i.e. taking the Grothendieck group).  For example, fusion categories decategorify to give fusion rings.  In general you lose a lot of information passing from a category to its decategorification.  For example, different fusion categories can give the same fusion ring, and some fusion rings come from no fusion categories.  However, in many cases you can prove results only by looking at the combinatorics of fusion rings (see \cite{MR1145672, MR1424954} for some examples).  Decategorifying the whole Brauer-Picard groupoid yields a bunch of structures.  Each fusion category gives a fusion ring, each bimodule category yields a fusion bimodule, and the composition gives a composition rule for fusion bimodules.  These structures satisfy a bunch of compatibility conditions.

Perhaps the most interesting part of this structure is the composition rules.  Given two bimodule categories ${}_\mathscr{C} \mathscr{M}_\mathscr{D}$ and ${}_\mathscr{D} \mathscr{N}_{\mathscr{E}}$ we can form the relative tensor product ${}_\mathscr{C} \mathscr{M} \boxtimes_\mathscr{D} \mathscr{N}_{\mathscr{E}}$.  In particular, given objects $m \in \mathscr{M}$ and $n \in \mathscr{N}$, we get an object $mn$ in the tensor product.   Taking Grothendieck groups, we have two bimodules ${}_{K(\mathscr{C})} K(\mathscr{M})_{K(\mathscr{D})}$ and ${}_{K(\mathscr{D})} K(\mathscr{N})_{K(\mathscr{C})}$, but their tensor product is typically not their tensor product as bimodules.  In fact, knowing the bimodule structures on $K(\mathscr{M})$ and $K(\mathscr{N})$ is not enough to determine $K(\mathscr{M} \boxtimes_{\mathscr{D}} \mathscr{N})$ even as an abelian group!  Instead the categorical structure yields extra information: a composition map $K(\mathscr{M}) \otimes_{K(\mathscr{D})} K(\mathscr{N}) \rightarrow K(\mathscr{M} \boxtimes_{\mathscr{D}} \mathscr{N})$.  Perhaps surprisingly, in our case there are usually not very many possible compositions.

By considering the whole decategorified Brauer-Picard groupoid at once, we can often rule out possible fusion rings or fusion bimodules which look fine locally.  For example, suppose we have a candidate fusion bimodule $M$ over two fusion rings $A$ and $B$, and further suppose we have a known bimodule category which decategorifies to give a fusion bimodule ${}_B N_C$.  If $M$ can be realized by an actual bimodule category, then it follows that there must be some composition rule sending $M \otimes_B N$ to a valid bimodule between $A$ and $C$.  If there are no such composition rules then we can rule out $M$ as coming from an actual bimodule category.

\subsection{Fusion modules, fusion bimodules, and multiplication maps}
Any semisimple module category over a fusion category induces a representation of the Grothendieck ring of the fusion category on the Grothendieck group of the module category. We first recall the Frobenius-Perron theory of fusion rings and modules. For more details, see \cite{MR2183279}.

\begin{definition}
A \textit{fusion ring} $(F,S)$ is a ring $F$ whose additive group is the free Abelian group on a 
finite set $S$ containing $1$, which is endowed with an involution denoted by $\xi \mapsto \bar{\xi}, \xi \in S$ which extends to an anti-involution on $F$.   In addition, these structures should satisfy the following condition,
 $\xi \eta = \sum_{\mu \in S} \limits N^{\xi}_{\eta \mu}$ for 
all $\xi, \eta \in S$, where the $N^{\xi}_{\eta \mu}$ are non-negative 
integers such that $N^{\xi}_{\eta 1} = \delta_{\xi, \bar{\eta}}$. 
The Frobenius-Perron dimension $d(\xi)$ of an element $\xi in F$ is its image under the unique nonzero homomorphism $F \rightarrow \mathbb{R}$ which maps $S$ into the set of positive numbers. 
\end{definition}

Here the basis $S$ and the involution are considered part of the data of the fusion ring. The Grothendieck ring of a fusion category is always a fusion ring, but there are many fusion rings which do not arise as the Grothendieck ring of any fusion category. We are interested in representations of fusion rings which respect the fusion structure.

\begin{definition}
 A (left) fusion module $(M,T) $ over a fusion ring $(F,S)$ is a finite set $T$ along with an indecomposable (left) representation of the fusion ring as endomorphisms of the free Abelian group on $T$ such that the action satisfies $\xi \eta = \sum_{\mu \in T} \limits N^{\xi}_{\eta \mu}$ for all $\xi \in S, \eta, \mu \in T$, where the $N^{\xi}_{ \eta \mu}$ are non-negative integers such that $N^{\xi}_{\eta \mu} = N^{\bar{\xi}}_{\mu \eta}$ for all $\xi, \mu, \eta $. A Frobenius-Perron dimension vector is a positive real vector indexed by $T$ which is an eigenvector for all the matrices $N^{\xi}, \xi \in S$. Such a dimension vector exists and is unique up to scalar multiples. \end{definition}

In a similar way, we can define right fusion modules. There are obvious notions of isomorphisms of fusion rings and fusion modules (namely there should be bijections on the basis sets which preserve the algebraic structure). 

\begin{remark}
Fusion rings are also known as unital based rings of finite rank. Fusion modules are sometimes called \textit{nimreps}, which stands for non-negative integer matrix representations.
\end{remark}

Let $(F,S) $ be a fusion ring with $S=\{\xi_1,...,\xi_m \} $. The right and left fusion matrices $R^{\xi} $ and $L^{\xi} $ of an element $\xi \in F $
are given by $(R^{\xi})_{\mu \eta}= (\eta \xi, \mu)$ and
$(L^{\xi})_{\mu \eta}= ( \xi \eta, \mu)$, $\eta, \mu \in S $ . If $(M,T) $ is a left fusion module over $(F,S) $, the fusion matrix  $A^{\kappa} $  of an element $\kappa \in M $ is given by 
$(A^{\kappa})_{\lambda \xi}=(\xi \kappa, \lambda) $, $\xi \in S, \ \mu \in T $. Fusion matrices for right fusion modules are defined similarly.

Given a fusion module $(M,T)$ over $(F,S)$, we denote by $(\mu,\eta)$ the dot product of two elements of $M$ with respect to the basis $T$, and similarly for two elements of $F$ (with respect to the basis $S$.) We also define \textit{right multiplication by duals} for a left fusion module as follows: $ \mu \bar{\lambda} := \sum_{\xi \in S} \limits (\mu, \xi \lambda) \xi  $ for $\mu, \lambda \in M $. Note that $\bar{\lambda} $ is not actually an element of the fusion module; the expression is just a formal argument for multiplication. Similarly for a right fusion module we may define left multiplication by duals.

\begin{lemma}
Let $(M,T) $ be a left fusion module over $(F,S) $. Then right multiplication by duals 
$ \mu \bar{\lambda}$ is biadditive in $\mu $ and $\lambda $,
we have $\overline{\mu \bar{\lambda}}=\lambda \bar{\mu} $,
and
$$(\xi \mu) \bar{\lambda}=\xi (\mu \bar{\lambda}), \ \forall \mu, \lambda \in T, \ \xi \in F. $$
%
\end{lemma}
\begin{proof}
We prove the last assertion. For any $\eta \in S $, we have
$$(\xi (\mu \bar{\lambda}), \eta ) =(\mu \bar{\lambda}, \bar{\xi} \eta ) =
\sum_{\zeta \in S} \limits (\zeta \lambda, \mu  ) 
(\zeta,\bar{\xi} \eta)=(\mu, \bar{\xi} \eta \lambda )=(\xi \mu, \eta \lambda)=((\xi \mu) \bar{\lambda}, \eta) .$$

%
\end{proof}
A similar associativity holds for left multiplication by duals in right fusion modules. We recall the following easy calculation.
\begin{lemma} \label{pglem}
 Let $(M,T) $ be an fusion module over a fusion ring $(F,S) $. For $\kappa, \lambda \in T $,
 $R^{\kappa \bar{\lambda}}=(A^{\lambda})^T A^{\kappa} $.
\end{lemma}

The canonical Frobenius-Perron dimension (for module categories) is mentioned in \cite{MR2677836}; we include a proof of existence for the convenience of the reader.
\begin{lemma}
Let $(M,T) $ be a left fusion module over $(F,S) $. There is a normalization $d_T$ of the Frobenius-Perron dimension vector of $T$ such that
$$d(\mu \bar{\lambda})=d_T(\mu)d_T(\lambda), \ \forall \ \mu,\lambda \in T .$$ 
\end{lemma}

\begin{proof}

Let $ d_S$ be the Frobenius-Perron dimension vector for $S$.
For each $\mu \in T $, set $d_T^{\mu}=A^{\mu} d_S$; this define an additive dimension function on $M$ (we use the notation $d_T^{\mu}$ both for the vector and for the corresponding dimension function). Then for any $\lambda \in T $,  we have
$$(A^{\lambda})^T d_T^{\mu} =(A^{\lambda})^TA^{\mu} d_S=R^{\mu \bar{\lambda}} d_S=d(\mu \bar{\lambda})d_S,$$ and then for any $\xi \in S $ we have $$d_T^{\mu}(\xi \lambda)=A^{\lambda}_{\cdot \xi} \cdot d_T=((A^{\lambda})^Td_T^{\mu})_{\xi}=  d(\mu \bar{\lambda})d(\xi)=d_T^{\mu}(\lambda)d(\xi).$$ This shows that $d_T^{\mu} $ is a Frobenius-Perron dimension function for each $\mu $, and in particular
we may choose positive scalars $a_{\mu} $ such that $a_{\mu} d_T^{\mu}=a_{\lambda}d_T^{\lambda} ,\ \forall \mu, \lambda \in T$. Fix $\mu, 
\lambda \in T  $. Then we have 
\begin{multline*}
d(\mu \bar{\lambda})d_S=(A^{\lambda})^TA^{\mu}d_S=(A^{\lambda})^T d_T^{\mu}=\frac{a_{\lambda}}{a_{\mu}} 
(A^{\lambda})^T d_T^{\lambda}=\frac{a_{\lambda}}{a_{\mu}}(A^{\lambda})^TA^{\lambda} d_S=
\frac{a_{\lambda}}{a_{\mu}} d(\lambda \bar{\lambda})d_S
\end{multline*}
so that $a_{\mu}d(\mu \bar{\lambda})=a_{\lambda} d(\lambda \bar{\lambda})$. Similarly,
we have $a_{\lambda}d(\lambda \bar{\mu})=a_{\mu} d(\mu \bar{\mu})$. Since $d(\mu \bar{\lambda})= d(\lambda \bar{\mu})$, this gives $(a_{\lambda})^2 d(\lambda \bar{\lambda})=(a_{\mu})^2d(\mu \bar{\mu})$. Since this holds for any $\mu, \lambda \in T $,
we have $ a_{\mu}=\frac{c} { \sqrt{d(\mu \bar{\mu})}}$ for some constant $c$ and all $\mu \in T $. Taking $c=1 $ and setting $d_T=a_{\mu} d_T^{\mu} $ for some $\mu $ gives the desired normalized dimension vector.

%
%
%

\end{proof}
%
%
%
%
%

Note that any basis elements $\xi \in S $ and $\mu \in T$, we have $$d(\xi)d(\mu) = d(\xi \mu ) \geq
d(\mu)(\xi \mu, \mu)=d(\mu)(\mu \bar{\mu},\xi) ,$$ so we get the bound
$(\mu \bar{\mu},\xi) \leq d(\xi) $. This will be important when searching for
 division algebras in fusion categories.

\begin{definition}
A fusion bimodule $(M,T) $ is a free Abelian group on the basis $T$
along with a left fusion module structure over a fusion ring $(F,S) $ and a right fusion module structure over a fusion ring $(G, R) $,
such that $$\xi (\mu \eta)= (\xi \mu )\eta, \ \forall \ \xi \in S, \ \mu \in T, \ \eta \in R  $$
and
$$  \mu (\bar{\lambda} \kappa)=  (\mu \bar{\lambda}) \kappa, \ \forall \ \mu, \lambda, \kappa \in T .$$
\end{definition}

\begin{lemma} \label{lem:leftright}
Let $(M,T) $ be a fusion bimodule. Then the left canonical Frobenius-Perron dimension coincides
with the right canonical Frobenius-Perron dimension.
\end{lemma}
\begin{proof}
For any $\kappa, \lambda, \mu \in T$, the relation $\kappa ( \bar{\lambda} \mu) =(\kappa \bar{\lambda}) \mu $ means that the column $v^{\kappa}_{\lambda, \mu} $ corresponding to $\kappa $ in the matrix (with respect to $T$) of right multiplication by $\bar{\lambda }\mu $ is the same as  the column $w^{\mu}_{\kappa, \lambda} $ corresponding to $\mu $ in the matrix (again with respect to $T$) of left multiplication by $\kappa \bar{\lambda}  $. Let $d_L $ be the left canonical Frobenius-Perron dimension of $M$. Then $v^{\kappa}_{\lambda, \mu} \cdot d_L=w^{\mu}_{\kappa, \lambda}\cdot d_L =d_L(\mu)d_L(\kappa)d_L(\lambda) $. This shows that that $d_L$ is an eigenvector of right multiplication by $\bar{\lambda }\mu $ with eigenvalue $ d_L (\lambda)d_L(\mu)$ for all $\lambda, \mu \in T $. Therefore $d_L $ agrees with the right canonical Frobenius-Perron dimension up to a scalar, and since we have for each $\lambda,\mu  \in T$ that the Frobenius-Perron eigenvalue of right multiplication by $\bar{\lambda }\mu $ is $d(\bar{\lambda }\mu ) =d_L (\lambda)d_L(\mu)$, that scalar is $1$.

\end{proof}

\begin{lemma}
The Grothendieck group of an indecomposable module category over a fusion category has the structure of a
fusion module over the Grothendieck ring of the fusion category. The Grothendieck group of 
an invertible bimodule category over a pair of fusion categories has the structure of a fusion bimodule over the Grothendieck rings
of the corresponding fusion categories.
\end{lemma}
\begin{proof}
The only subtle part is to verify
$ \mu (\bar{\lambda} \kappa) \cong  (\mu \bar{\lambda}) \kappa$ for objects in a bimodule category,
which follows from the associativity of left and right internal homs (Lemma \ref{basid}).
\end{proof}

Let $\mathscr{A}$, $\mathscr{B}$, and $\mathscr{C} $ be fusion categories with fusion rings $A$, $B$, and $C$, respectively Let ${}_{\mathscr{A}} \mathscr{K} { }_{\mathscr{B}}$, ${}_{\mathscr{B}} \mathscr{L} { }_{\mathscr{C}}$, and $ {}_{\mathscr{A}} \mathscr{M} { }_{\mathscr{C}}  $ be invertible bimodule categories with fusion bimodules ${}_A K {}_B$, $ {}_B L {}_C$, and $ {}_A M {}_C $, respectively, such that $\mathscr{K} \boxtimes_{\mathscr{B}} \mathscr{L} \cong \mathscr{M} $.  The equivalence of bimodule categories  $\mathscr{K} \boxtimes_{\mathscr{B}} \mathscr{L} \cong \mathscr{M} $ induces an $A-C$ bimodule map from $K \otimes_B L $ to $M$ 
which takes tensor products of basis elements of $ K$ and $L$ to non-negative combinations 
of basis elements in $M$. 

%

The associativity of internal homs (Lemma \ref{inthomass}) implies that this map preserves dimension and multiplication by duals in the following sense: if $ \xi_1 \otimes \eta_1  \mapsto \sum a_i \mu_i$ and  $ \xi_2 \otimes \eta_2  \mapsto \sum b_i \mu_i$ where $\xi_1, \xi_2 \in K$, $\eta_1, \eta_2 \in L$ are basis elements and the sums are over the basis $\{\mu_i \}$ of $ M$, then for any $\lambda \in A $ we have $$( \xi_1 (\eta_1 \bar{\eta}_2) \bar{\xi}_2, \lambda)=\sum a_i b_j (\mu_i \bar{\mu}_j, \lambda ) $$ and for any $\sigma \in C$ we have $$( \bar{\xi}_1 (\bar{\eta}_1 \eta_2) \xi_2, \sigma)=\sum a_i b_j (\bar{\mu}_i\mu_j, \sigma ) .$$ Similarly, by the last statement in
Lemma \ref{inthomass} we have $$\sum  a_ib_i= (\bar{\xi}_1 \xi_2,  \eta_2 \bar{\eta}_1).$$
We also have $$d(\xi_1) d(\eta_1)=\sum a_i d(\mu_i),$$ where all the dimensions are the canonical Frobenius-Perron dimensions for fusion bimodules.

%
%
%

\begin{definition}\\
 (a) A \textit{multiplication map} on a triple of fusion bimodules $({}_A K {}_B, {}_B L {}_C, {}_A M {}_C) $ is a homomorphism from ${}_A K \otimes_B L {}_C $ to ${}_A M {}_C$ which takes tensor products of basis elements in $K$ and $L$ to non-negative combinations of basis elements of $M$ and preserves dimension and multiplication by duals in the sense of the preceding paragraph. The triple is $(K,L,M)$ said to be \textit{multiplicatively compatible} if there exists such a multiplication map.\\
(b) Similarly, a multiplication map on a triple of fusion modules/bimodules $(K {}_A, {}_A L {}_B, M {}_B) $ is a homomorphism from $ K \otimes_A L {}_B $ to $ M {}_B$ which takes tensor products of basis elements in $K$ and $L$ to non-negative combinations of basis elements of $M$ and preserves dimension and right multiplication by duals in the sense of the preceding paragraph. The triple is $(K,L,M)$ said to be \textit{multiplicatively compatible} if there exists such a multiplication map.\\\end{definition}

Note that in general such a multiplication map, if it exists, is not uniquely determined by the fusion rules.

\subsection{Algorithms for computation}

In this section we give some algorithms for finding the fusion modules for a fusion ring, the fusion bimodules for two fusion rings, and the 
multiplication maps for triples of fusion modules or bimodules.  It is easy to see that these are all finite problems. 
Indeed we are always looking for a finite list of natural numbers, and it is not difficult 
to find bounds for these numbers via dimension considerations.  Thus in principle one could simply enumerate all possibilities and check which work. In practice this is a hopelessly long computation, even for very simple fusion rings, so we give some more efficient algorithms.  These algorithms are still only practical for small fusion rings but are sufficient for the Asaeda-Haagerup rings.

First, we describe a simple algorithm for finding all the decompositions of a positive semi-definite square non-negative integer matrix $M$ into $A^TA$ for non-negative integer matrix $A$ with no zero columns (up to permutations of columns of $A$).

\begin{definition}
 The set of sum of squares decompositions of a positive integer $N$ is the set of all vectors of pairs of positive integers $(a_i,b_i), 1\leq i \leq r$ such that $a_i < a_j $ whenever $i<j$ and $\sum_{i=1}^r \limits b_i a_i^2 =N$.
\end{definition}

\begin{definition}
 An $m$-partial decomposition of a positive semi-definite $n \times n$ non-negative integer matrix $M$ is an $m \times k$ non-negative integer matrix $P$, $0 \leq m \leq n$, $0 \leq k$, such that $P$ has no zero columns and $P_i  \cdot P_j = M_{ij} $ for all $1 \leq i,j \leq m$. We define $P_i \cdot P_j = 0$ if $k=0$.   
\end{definition}
 
Thus the problem is to find all $n$-partial decompositions of $M$. The algorithm proceeds by describing all $(m+1)$-partial decompositions whose first $m$ rows form a given $m$-partial decomposition.

\begin{algorithm} To find all decompositions of a positive semi-definite $n \times n$ non-negative integer matrix $M$ into $A^TA$ for a non-negative integer matrix $A$ with no zero columns (up to permutations of columns of $A$):\\

1) Find the sets of sum of squares decompositions of all diagonal entries of $M$.\\

2) Start with the unique $0$-partial decomposition of $M$ (the empty matrix). Then for every $m$-partial decomposition $P$ of $M$, $m<n$, consider all $(m+1)$-partial decompositions as follows:\\

\indent Case 1: $P$ is empty, $M_{(m+1)(m+1)}=0 $. By positive semi-definiteness of $M$, the unique $(m+1)$-partial decomposition is empty as well.\\

\indent Case 2: $P$ is empty, $M_{(m+1)(m+1)}= N > 0$. Then for each sum of squares decomposition of $N$, $(a_i,b_i), 1 \leq i \leq r$ we get an $(m+1)$-partial decomposition by taking a vector with $b_i$ copies of $a_i$ for each $i$, and then adding $m$ rows of zeros above it to complete the matrix with $m+1$ rows.\\

\indent Case 3: $P$ has $k$ columns, $k > 0$,  $M_{(m+1)(m+1)}=0 $. Then adding a row of zeros to $P$ gives the unique $m+1$-partial decomposition which extends $P$.\\

\indent Case 4: $P$ has $k$ columns, $k > 0$,  $M_{(m+1)(m+1)}= N > 0$. Then $(m+1)$-partial decompositions which extend $P$ correspond to the following data: a sum of squares decomposition of $N$, $(a_i,b_i), 1 \leq i \leq r$, and a vector $v=(v_1,\ldots,v_k ) $ such that:\\ 
(a)$v_j \in \{ a_i | 1 \leq i \leq r\} \cup \{0\} $  for all $1 \leq j \leq k $ \\
(b) $|\{ j |v_j =a_i  \} | \leq b_i$  for all $1 \leq i \leq r $\\
(c) $v \cdot P_l=M_{l(m+1)} $ for all $1 \leq l \leq m $.\\
Given such data, we can form an $(m+1)$-partial decomposition by adding a row to $P$ with values equal to $v$, and then adding a column for each ``leftover'' member of the sum of squares decomposition with zeros above.\\

\end{algorithm}

We can use the above decomposition algorithm to find all possible principal graphs of 
simple objects in simple module categories over a given fusion category following the technique from \cite[\S 3]{1102.2631}.
In the following algorithms we fix for any fusion ring $(F,S) $ an ordering $S=\{\xi_1,...,\xi_m \} $.
Finding left fusion modules and finding right fusion modules are equivalent problems. We write the algorithms for right modules
since those are the ones we actually found with the computer.

\begin{algorithm}
To find all possible fusion matrices of basis elements in fusion modules over a given fusion ring $(F,S) $:\\
1) Compute the left fusion matrix $L^{\xi} $ for each $\xi \in F$ such that $(\xi,1)=1 $ and $0 \leq (\xi, \eta ) \leq d(\eta ) $ for all $\eta \in S$, and check whether it is symmetric with the determinants of the leading principal minors all non-negative (which is a necessary condition for the matrix to be positive semi-definite).\\

2) For each symmetric fusion matrix $L^\xi$ whose leading principal minors have non-negative determinants, compute the set of decompositions $L^\xi=A^TA$ for $A$ a non-negative integer matrix.  

\end{algorithm}
\begin{remark}
 As in \cite[\S 3]{1102.2631} it is easier to decompose the reduced fusion matrices where the fusion rules of the identity element are left out; see the discussion there.
\end{remark}

Because of \ref{pglem}, the preceding algorithm finds all possible fusion matrices of fusion module basis elements.

Given a decomposition of a fusion matrix $L^\xi=A^TA$ we assign dimensions to the
rows of $A$ through the vector $d_A=\frac{1}{\sqrt{d( \xi)}}A d $, where $d$ is the Frobenius-Perron dimension vector of $F$. We define the dimension of $A$ to be $d(A):=\sqrt{d( \xi)} $.

We can now use this list of possible fusion matrices to find all fusion modules. The data of a fusion module is given by the collection of fusion matrices for each basis element, so we just need to check which collections of fusion matrices match up to give consistent module multiplication rules. By the Frobenius-Perron theory for fusion modules, a necessary condition for a collection of fusion matrices $(A^1,.., A^n)$ to give a fusion module is that all the $A^i$ have a common dimension vector $d_A$ of length $n$ and that $d_{A^i}=(d_A)(i)$. However, since we only keep a list of fusion matrices up to permutations of rows, to find all fusion modules from the list of fusion matrices we need to consider permutations of rows each fusion matrix. This is accomplished as follows.

\begin{algorithm}\label{alg1}
 To find all right fusion modules over a fusion ring $(F,S)$:\\

1) Find all possible fusion matrices for the basis elements, i.e. non-negative integer matrices $A$ such that  $A^TA=L^{\xi}$ for some $ \xi$ in $F$ (with coefficients bounded by $(\xi, \eta) \leq d(\eta), \ \forall \eta \in S $) as in the previous algorithm. Sort the rows of each matrix $A$ by increasing dimension.\\

2) For each $n$, find all $n$-tuples of fusion matrices $(A^1,.., A^n)$ such that all the $A_i$ have $n$ rows and share a common row dimension vector $d_A$ with $d(A_i)=d_A( i )$.\\

3) For each $n$-tuple of fusion matrices $(A^1,.., A^n)$ with common dimension vector $d_A$ such that $d(A^i)=d_A(i) $, we try all possible ways to build fusion modules as follows.

By a \textit{$k$-consistent set of permutations}, $k \leq n $, for $(A^1,.., A^n)$, we will mean a set of $k$ elements $\{\sigma_1,..,\sigma_k \} $ in the symmetric group $\mathcal{S}_n$, such that:

(i) $\sigma_i(d_A)=d_A, \ \forall 1 \leq i \leq k $, where the $\sigma_i $ act by permuting the entries of $d_A $;\\
(ii) $A^i_{ \sigma_i(j)l }=A^j_{ \sigma_j(i)\bar{l}} $ for all $1 \leq l \leq m $ and $1 \leq i,j \leq k $
(where the involution on the columns of $A_i$ is defined by the involution in the fusion ring.)

3a) We inductively build all possible $n$-consistent sets of permutations for 
$(A^1,.., A^n)$ by starting with the empty set (corresponding to $k=0$). 
Then for each $k$-consistent set of permutations for $k < n $, we find all ways to extend to a $(k+1)$-consistent set of permutations by checking which of those permutations $\sigma_{k+1}$ that satisfy (i) also satisfy (ii) for $i=k+1,\  j \leq k+1, \ 1 \leq l \leq m $.

3b) For all $n$-consistent sets of permutations for $(A^1,.., A^n)$, we check whether module associativity holds:
$$\sum_{1 
\leq i \leq m} \limits L^{s}_{it} A^{r}_{\sigma_r (l)i} = \sum_{1 \leq j \leq n}A^r_{ \sigma_r(j)s}  A^j_{ \sigma_j(l)t}, 
\ 1 \leq s,t \leq m, \ 1 \leq l,r \leq n.$$

\end{algorithm}

Suppose $(\sigma_1,...,\sigma_n)$ is an $n$-consistent sets of permutations for $(A^1,...,A^n) $ satisfying
condition 3b) of Algorithm \ref{alg1}. Then we can construct a right fusion module $(M,T) $ with  $T=\{ \eta_1,...,\eta_n \} $ 
over $(F,S) $ by setting
$$\eta_j \xi_i =\sum_{k=1}^{n} A^j_{k \sigma_j(i)}  \eta_k  .$$
Conversely, any fusion module $(M,T) $ appears this way in the algorithm (when $ \sigma_j(A^j)$ is the fusion matrix of  $\eta_j $ for some ordering of $T=\{\eta_1,...,\eta_n  \}$ and each $1 \leq j \leq n$). Therefore Algorithm \ref{alg1} gives the complete list of right fusion modules. 

\begin{remark}
 \begin{enumerate}
  \item The list of fusion modules generated by Algorithm \ref{alg1} may contain duplicates. However it is easy to check whether two given fusion modules
  are isomorphic (one simply looks at all the dimension preserving bijections on the basis sets and checks whether 
  the multiplicative structure constants are preserved under that bijection), so one can go down the list and eliminate duplicates. 
  \item A non-trivial permutation may act trivially on a matrix $A_i$ if $A_i$ has multiple identical rows, 
  so one can save time when building the fusion modules by only considering at each stage those permutations $\sigma$ for which $\sigma(A_i)$ is distinct
  from $\sigma'(A_i) $ for all previously checked permutations $\sigma'$.
 \end{enumerate}

\end{remark}

If $(M,T) $ is a right fusion module over $(F,S)$, one can define a left fusion module $(N,T) $ on the same basis sets by 
defining $\xi \eta=\eta \bar{\xi}, \ \forall \eta \in T, \ \xi \in S $. This construction gives a bijection between the right and 
left fusion modules over $(F,S)$. Thus once one has the right fusion modules, one also has the left fusion modules.

If $(M,T) $ is a right fusion module over $(F,S)$, with orderings $S =\{\xi_1,...,\xi_m \}$ and 
$T=\{\eta_1,...,\eta_n \} $, we denote the corresponding fusion matrix by
by $M^{k}_{ij} := M^{\eta_k}_{\eta_i \xi_j}, \ 1 \leq i,k \leq n, \ j \leq i \leq m $, with analogous notation for left fusion modules. We set $\bar{j}=l $, where $\xi_j=\bar{\xi}_l$.

%

Once we have the fusion modules we can find the fusion bimodules. The data of a fusion bimodule consists of a right fusion module, a left fusion module, and an identification of the bases of these modules. By Lemma \ref{lem:leftright}, it is necessary that the two bases have a common dimension vector and  that the identification perserves this vector. Therefore, if the bases of the left and right fusion modules are indexed by $\{1,...,n \} $ with increasing Frobenius-Perron dimensions, then an identification of the bases can be expressed as a permutation in $\mathcal{S}_n$ which preserves the common dimension vector.

\begin{algorithm} \label{alg2}
 To find the bimodules over a pair of fusion rings $(F_1,S_1)$ and $(F_2,S_2) $:\\

1) Find all pairs $((M,T_1),(N,T_2))$ such that $(M,T_1)$ is an left fusion module over $(F_1,S_1)$ and 
$(N,T_2)$ is an right fusion module over $(F_2,S_2)$ and such that $T_1$ and $T_2$ have the same dimension vector $d_T$.

2) For each such pair, fix orderings on $T_1$, $T_2$, $S_1$, and $S_2 $ and let $M^k_{ij}$ and $N^{p}_{qs} $ be the multiplicative
structure constants for $M$ and $N$ respectively. Let $m $, $n$, and $r$ be the sizes of $S_1 $, $T_1$, and $ S_2$, respectively.
For each dimension preserving permutation $\sigma$ of the common dimension vector $d_T$ of $T_1 $ and $T_2$,
check whether we have, when $N $ is twisted by $ \sigma$,

(i) Bimodule associativity: $$\sum_{p=1}^{n}  \limits M^i_{jp} N^{\sigma(p)}_{\sigma(q)l}=\sum_{p=1}^n \limits N^{\sigma(j)}_{\sigma(p)l} M^p_{qi}
, \  1 \leq j,q \leq n, \ 1\leq i \leq m, \ 1 \leq l \leq r .$$

and

(ii) Dual associativity: $$\sum_{i = 1}^m \limits M^k_{ji}M^q_{pi}=\sum_{l=1}^r \limits N^{\sigma(p)}_{\sigma(j)l}N^{\sigma(q)}_{\sigma(k)l}, \ 1 \leq j,k,p,q \leq n  .$$

%
%
%
%
\end{algorithm}

Given an output $((M,T_1), (N,T_2),\sigma ) $ of Algorithm \ref{alg2} satisfying conditions
(i) and (ii) in Step 2, we can construct a fusion bimodule on a basis of size $\#(T_1) =\#(T_2)$ by defining the left
fusion module structure by $M^k_{ij} $ and the right fusion module structure by $N^{\sigma(p)}_{\sigma(q)s} $. Conversely, any fusion bimodule will be found this way since the constituent
left and right fusion modules must appear on the lists of left and right fusion modules found in Algorithm \ref{alg1} (the permutation $\sigma $ is necessary since the lists of left and right fusion modules only contain one representative of each isomorphism class.)

Once we have lists of fusion bimodules over various fusion rings, we would like to check which triples of fusion bimodules are multiplicatively compatible. Let $({}_A K {}_B, {}_B L {}_C, {}_A M {}_C) $ be a triple of fusion bimodules, with bases $\xi_i$ for $ 1 \leq i \leq l$,  $\eta_j$ for $ 1 \leq j \leq m$, and  $\mu_k$ for $1 \leq k \leq n $, respectively. We will find possible multiplication maps for this triple by inductively defining multiplication for pairs of basis elements $(\xi_i,\eta_j)$, until we have defined multiplication for all pairs of basis elements.  For each $(i,j) $, the product $\xi_i \eta_j $ will be a non-negative combination of the $\mu_k $ whose dimension is the product of the dimensions of $\xi_i $ and  $ \eta_j$. At each step of the induction, we check consistency of our choice for $\xi \eta $ with all previously defined $\xi_i \eta_j$ using Lemma \ref{inthomass}. The details are as follows.

We order the set of pairs of integers $(p,q) , \ 1 \leq p \leq m, 1 \leq q \leq n$ lexicographically, and for each $(p,q) $ we let $(p,q)' $ denote the successor of $(p,q)$ in this order. 

\begin{definition}
 A $(p,q)$-partial multiplication map for $(p,q) \leq (m,n) $ is an assignment of a vector of integers of length $n$, $v^{ij}$, for each pair $(i,j) \leq (p,q)$ such that:\\
(a) $d(\xi_i)d( \eta_j) = \sum_{k=1}^n \limits v^{ij}_kd( \mu_k)$\\
(b) $(\bar{x}_ix_i,y_j\bar{y}_j) = \sum_{k=1}^n \limits (v^{ij}_k)^2$ for all $  (i_1,j_1),(i_2,j_2) \leq (p,q) $\\
(c) for all basis elements $\lambda \in A$ and all  $(i_1,j_1),(i_2,j_2) \leq (p,q)$ we have $$(( \xi_{i_1} (\eta_{j_1} \bar{\eta}_{j_2})) \bar{\xi}_{i_2}, \lambda)=\sum v^{i_1 j_1}_{k_1} v^{i_2j_2}_{k_2} (\mu_{k_1} \bar{\mu}_{k_2}, \lambda ) .$$\\

(c') for all basis elements  $\kappa \in C$ and all $ (i_1,j_1),(i_2,j_2) \leq (p,q)$
 we have $$( \bar{\xi}_{i_1}((\bar{\eta}_{j_1} {\eta}_{j_2}) {\xi}_{i_2}), \kappa)=\sum v^{i_1 j_1}_{k_1} v^{i_2j_2}_{k_2} (\bar{\mu}_{k_1} {\mu}_{k_2}, \kappa ) .$$\\

(d) for all  $(i_1,j_1),(i_2,j_2) \leq (p,q)$ we have
$$ v^{i_1 j_1} \cdot  v^{i_2 j_2} =  (\bar{\xi}_{i_1} \xi_{i_2},  \eta_{j_2} \bar{\eta}_{j_1}) .$$
\end{definition}
 We also define a $(0,0)$-partial multiplication map to be the empty map, and let  $(0,0)'=(1,1) $. 
 
 Note that a multiplication map in the sense of the previous section is in particular an $(m,n) $-partial multiplication map. Conditions (c) and (c') express the associativity between multiplication of elements in $A$ and $B$ and multiplication by duals in $A$ and $B$; if the partial multiplication map can be extended to a multiplication map which is a decategorification of the relative tensor product of invertible bimodule categories, then this associativity is guaranteed by Lemma \ref{inthomass}.

We can now find all multiplication maps by inductively building all $(p,q)$-partial multiplication maps, at each stage checking conditions (c),(c'), and (d). Note that a paticular consequence of (d) is that $$  v^{p q} \cdot  v^{pq} =  (\bar{\xi}_{p} \xi_{p},  \eta_{q} \bar{\eta}_{q}), \ \forall p,q ,$$
which determines the sum of the squares of the entries of each $v^{pq} $.

\begin{algorithm} \label{alg3}
To check whether a given triple of fusion bimodules is multiplicatively compatible:

 Step 1: Start with the $(0,0)$-partial multiplication map. Then inductively find all extensions of a given $(p,q)$-partial multiplication map, $(p,q) < (m,n) $, to a $(p,q)' $-partial multiplication map as follows:\\
 
 1a)  Let $(p,q)'=(p',q') $. Find candidates for $v^{p'q'} $ by checking 
  conditions (a) and (b) above as follows: for each sum of squares decomposition $(a_i,b_i)$ of  $(\bar{\xi}_{p'}\xi_{p'},\eta_{q'}\bar{\eta}_{q'})$ such that $\sum b_i \leq n$, form the vector $v$ of size $n$ given by $b_i$ copies of each $a_i$ with the rest of the entries equal to $0$. 
  Then find all distinct vectors $v'$
  which arise as permutations of $v$. \\ 
  
  1b) For each candidate $v'$ for for $v^{p'q'} $ found in (1a), check whether  $v' \cdot d_M = d(\xi_i)d( \eta_j) $, 
  where $d_M$ is the dimension vector of the bimodule $M$. Finally if the dimension condition is satisfied for $v' $, 
  check conditions (c),(c'), and (d) for  $(i_1,j_1)=(p',q') $ and all $(i_2,j_2) \leq (p',q') $, using  $v^{p'q'} =v'$.\\
%
%
%

Step 2: For each $(m,n) $-partial multiplication map found in Step 1, check whether 
$$ (\xi \rho) \eta= \xi (\rho \eta), \quad \lambda (\xi \eta)=(\lambda \xi) \eta, \quad (\xi \eta) \kappa=
\xi (\eta \kappa) ,  $$\\
for all basis vectors $$\lambda \in A, \ \rho \in B, \ \kappa \in C, \ \xi \in K, \ \eta \in L ,$$
where multiplication between elements of $K$ and $L$ is defined on basis elements by the partial multiplication map and extended biadditively.
\end{algorithm}

Any $(m,n)$-partial multiplication map found in Algorithm \ref{alg3} which satisfies the condition
in Step 2 is a multiplication map in the sense of the previous section, and conversely any multiplication map will be found this way. In fact for our purposes we do not need to find all possible multiplication maps for multiplicatively compatible triples; we only need to know for each triple whether at least one multiplication map exists. Once we find one we can terminate the algorithm for that triple.

In a similar way, we can check whether a given right module/bimodule/right module triple is multiplicatively compatible, using only condition (c') and not (c).

\section{The Brauer-Picard groupoid of the Asaeda-Haagerup categories}

In this section we compute all Morita equivalences between the three Asaeda-Haagerup categories and prove several results concerning the full Brauer-Picard groupoid.

\subsection{Fusion modules and bimodules of the Asaeda-Haagerup fusion rings}

The Grothendieck rings of the Asaeda-Haagerup fusion categories will be called the Asaeda-Haagerup fusion rings.

\begin{theorem}\\
(a) Up to isomorphism, the fusion modules over the Asaeda-Haagerup fusion rings are classified as follows: there are $24$ $AH_1$-modules, $21$ $AH_2$-modules, and $20$  $AH_3$-modules. Full multiplication rules for all these modules are included in the supplementary files \textit{AH1Modules}, \textit{AH1Modules}, and \textit{AH3Modules}.\\
 (b) Up to isomorphism, the fusion bimodules over the Asaeda-Haagerup fusion rings are classified as follows: there are $14$ $AH_1-AH_1$-bimodules, $13 $  $AH_2-AH_2$-bimodules, $13$  $AH_3-AH_3$ bimodules, $9$ $AH_1-AH_2$ bimodules, $7$  $AH_1-AH_3$ bimodules, and $6$  $AH_2-AH_3$ bimodules. Full multiplication rules for all these bimodules are included in the supplementary file \textit{Bimodules}.
\end{theorem}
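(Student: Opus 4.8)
The plan is to prove both parts by running the algorithms of Section~5, suitably pruned so that the searches terminate. Nothing categorical is needed here: admissibility is a purely combinatorial condition on fusion modules and bimodules, so the task is simply to enumerate all admissible fusion modules over each of the fusion rings $AH1$, $AH2$, $AH3$ and all admissible fusion bimodules over each pair, and then count them up to isomorphism.

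For part~(a) we apply the algorithm for admissible fusion modules over a fixed fusion ring $(F,S)$. The first step produces the list of candidate fusion matrices of simple vectors exactly as in \cite[\S 3]{1102.2631}: for each $\xi\in F$ with $(\xi,1)=1$ and $0\le(\xi,\eta)\le d(\eta)$ for all $\eta\in S$ we form the putative matrix $M^{\xi}$, discard the ones that are not positive semi-definite, and run the $m$-partial decomposition algorithm to list all factorizations $M^{\xi}=AA^{T}$ with $A$ a non-negative integer matrix with no zero columns, up to column permutation. Each column of each such $A$ acquires a Frobenius-Perron dimension from $\frac{1}{\sqrt{d(\xi)}}A^{T}d$, where $d$ is the dimension vector of $F$; since all the relevant entries are bounded by the global dimension of $F$, these lists are finite. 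We then glue: we search over tuples $(A_1,\ldots,A_n)$, one matrix per generator $\xi_i$ with $(\xi_i,1)=1$, all having $n$ columns and a common column-dimension vector, and over the column permutations preserving that vector, keeping the assignments that satisfy the fusion-module relations (the representation identity together with $N^{\xi}_{\eta\mu}=N^{\bar{\xi}}_{\mu\eta}$), are indecomposable, and pass the admissibility conditions $d_F(\mu\bar{\mu})=d_M(\mu)^{2}$ and $(\xi\mu,\eta\mu)=(\xi(\mu\bar{\mu}),\eta)$. Taking the survivors up to isomorphism and doing this for $AH1$, $AH2$, $AH3$ yields the counts $24$, $21$, $20$.

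For part~(b), with the one-sided modules in hand the bimodule classification is the cheap step: following the algorithm for admissible bimodules over a pair $(F,G)$, we take each pair $(M,N)$ with $M$ an admissible left $F$-module, $N$ an admissible right $G$-module, and $M,N$ sharing a Frobenius-Perron dimension vector, and for each dimension-preserving identification of their bases we test whether the left $F$-action commutes with the right $G$-action and whether the resulting bimodule satisfies $(\mu\bar{\nu},\mu\bar{\nu})=(\bar{\mu}\mu,\bar{\nu}\nu)$ and $(\bar{\mu}\nu,\bar{\mu}\nu)=(\mu\bar{\mu},\nu\bar{\nu})$. Running this for all six cases (the three diagonal ones $AH1$-$AH1$, $AH2$-$AH2$, $AH3$-$AH3$ and the three mixed ones $AH1$-$AH2$, $AH1$-$AH3$, $AH2$-$AH3$; a mixed pair and its reverse give mutually opposite bimodules, which are simultaneously admissible, so one ordering suffices) produces the counts $14$, $13$, $13$, $9$, $7$, $6$.

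The only genuine obstacle is the size of the search, which as observed in Section~5 makes naive enumeration hopeless even for small rings; the resolution is to carry out the gluing and permutation steps incrementally — adding one generator's matrix, one column permutation, or one row of structure constants at a time and backtracking the instant the partial data violates a linear constraint coming from a fusion relation, Frobenius reciprocity, or the fixed dimension vector — which we implement as a branch-and-bound search in C++. A secondary point is confidence in the output: we verify that the module and bimodule categories we already know must occur appear in the lists with the expected fusion data, namely the regular modules, the modules and bimodules realizing the subfactors $AH$, $AH+1$, $AH+2$, and the modules obtained by restricting along the algebra objects $1+\alpha$ in $\mathscr{AH}_2$ and $1+\beta$ in $\mathscr{AH}_3$. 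The complete multiplication tables are recorded in the supplementary files \textit{AH1Modules}, \textit{AH2Modules}, \textit{AH3Modules}, and \textit{Bimodules}.
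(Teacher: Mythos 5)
Your proposal is correct and is essentially the paper's own argument: the paper's proof of this theorem consists of the single line ``Apply the algorithms from the previous section,'' i.e.\ exactly the Section~5 computer enumeration of admissible fusion modules and bimodules that you describe, with the incremental/backtracking implementation and the output recorded in the supplementary data files. Your added sanity checks against the known realizations are a reasonable elaboration but do not change the method.
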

\begin{proof}
Apply the algorithms from the previous section.
\end{proof}

For all pairs $(x,y)$ where $x$ is an fusion $AH_i-AH_j$-bimodule and $y$ is an fusion $AHj-AHk$-bimodule for some $1 \leq i, j, k \leq 3 $ we have computed the list of multiplicatively compatible triples $(x,y,z)$. These lists are included in the supplementary file \textit{BimoduleCompatibility}.

Similarly, for all pairs $(x,y)$ where $x$ is an fusion right $AHi$-module and $y$ is an fusion $AH_i-AH_j$-bimodule for some $1 \leq i, j\leq 3 $ we have computed the list of multiplicatively compatible triples $(x,y,z)$. These lists are included in supplementary file \textit{ModuleCompatibility}.


\subsection{Reductions using compatibility}

We now analyze the Brauer-Picard groupoid using a sequence of deductions from known information and multiplicative compatibility.

We identify the fusion rings $AH_1, AH_2, AH_3$ with the Grothendieck rings of the fusion categories $\mathscr{AH}_1,\mathscr{AH}_2 ,\mathscr{AH}_3   $. We will say that an $AH_i$ fusion module (resp. $AH_i-AH_j$ fusion bimodule) is realized if it is induced by an $\mathscr{AH}_i$-module category (resp. $\mathscr{AH}_i-\mathscr{AH}_j$-bimodule category.) We will say such a fusion module (resp. fusion bimodule) is realized uniquely if any two categories which realize it are equivalent as module categories (resp. bimodule categories). Any fusion bimodule has a dual bimodule; if the original fusion bimodule is realized by a bimodule category then the dual bimodule is realized by the opposite category. 

We will be referring extensively to the lists of fusion modules, fusion bimodules, and multiplicatively compatible triples over the Asaeda-Haagerup fusion rings. These lists are given in the supplementary data files \textit{AH1Modules}, \textit{AH2Modules}, \textit{AH3Modules}, \textit{Bimodules}, \textit{BimoduleCompatibility}, and \textit{ModuleCompatibility}.

We introduce the following notation. The symbol $a_{i}$ will denote the $a^{th}$ fusion module on the list of $AH_i$ fusion modules in the supplementary file \textit{AH1Modules}, \textit{AH2Modules}, or \textit{AH3Modules} (depending on whether $i=1,2$, or $3$.). Similarly, $a_{ij}$ will denote the $a^{th}$ fusion bimodule on the list of $AH_i-AH_j$ fusion bimodules. For two fusion bimodules $a_{ij}$ and $b_{jk} $, $a_{ij} \cdot b_{jk} $ will denote the set of bimodules $z_{ik} $ such that the triple $(a_{ij}, b_{jk}, z_{ik}) $ appears on the list of triples which are multiplicatively compatible. If there is a unique such $c_{ik}$, we say that $a_{ij}$ and  $b_{jk} $ have a unique multiplication and write $a_{ij} b_{jk}=c_{ik} $. The same notation is also used when $a_i$ is a fusion module and $b_{ij} $ is a fusion bimodule.

\begin{lemma}
 The following fusion bimodules are realized:\\
 $12_{11}, 14_{11}, 9_{12}, 6_{13}, 9_{21}, 13_{22},  6_{23}, 6_{31},  6_{32}, 13_{33} $.
\end{lemma}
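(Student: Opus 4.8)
The plan is to bootstrap from the few fusion bimodules we already know to be realized by honest bimodule categories and then propagate realizability using multiplicative compatibility of triples together with the composition structure of the Brauer-Picard groupoid. The starting point is the list of subfactors constructed in Section 4: the subfactor $AH$ gives an invertible $\mathscr{AH}_2$-$\mathscr{AH}$ bimodule category generated by $\kappa$, the subfactor $AH+1$ gives an invertible $\mathscr{AH}_3$-$\mathscr{AH}$ bimodule category generated by $\lambda$, and $AH+2$ gives an invertible $\mathscr{AH}_1$-$\mathscr{AH}$ bimodule category; since $\mathscr{AH}\cong\mathscr{AH}_1$ by the Corollary following Lemma~\ref{ah_object}, each of these is a Morita equivalence between two of the three Asaeda-Haagerup fusion categories, and likewise the two index-$2$ inclusions of Theorem~\ref{index2} give invertible $\mathscr{AH}_2$-$\mathscr{AH}_3$ and $\mathscr{AH}_3$-$\mathscr{AH}_2$ bimodule categories. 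First I would decategorify each of these known bimodule categories: compute the associated fusion bimodule (the nimrep structure together with the dual-bimodule structure, which is forced by Frobenius reciprocity, Lemma for spherical $2$-categories above), and match it against the classification list to identify which numbered bimodule $a_{ij}$ it is. This should pin down a short list of realized bimodules — in particular it should identify several of the $10$ entries claimed, e.g.\ the one coming from $\kappa$ should be one of the $AH1$-$AH2$ bimodules and its opposite the corresponding $AH2$-$AH1$ bimodule.

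Next I would use the composition rule. If $a_{ij}$ and $b_{jk}$ are both realized by bimodule categories, then their Deligne product $\mathscr{K}\boxtimes_{\mathscr{AH}_j}\mathscr{L}$ is an honest (invertible, if both were) bimodule category, hence its decategorification is a realized $AHi$-$AHk$ fusion bimodule, and moreover it must occur in the set $a_{ij}\cdot b_{jk}$ of multiplicatively compatible completions. Whenever that set is a singleton $\{c_{ik}\}$ we conclude $c_{ik}$ is realized; this is exactly the ``unique multiplication'' notation set up just before the lemma. So the strategy is: seed the realized set with the bimodules coming from the five subfactors and their opposites, then repeatedly close under unique multiplications (and under taking opposites/duals, since the opposite of a realized bimodule is realized via the opposite category) until the list stabilizes. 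I would also use the identity bimodules — the regular bimodule category $\mathscr{AH}_i$ over itself, whose decategorification is the regular fusion bimodule $AHi$, is trivially realized, and multiplying by it can sometimes resolve an otherwise ambiguous composition by forcing consistency. The expectation, which one checks against the supplementary files \emph{Bimodules} and \emph{BimoduleCompatibility}, is that this closure process reaches exactly the ten bimodules $12_{11},14_{11},9_{12},6_{13},9_{21},13_{22},6_{23},6_{31},6_{32},13_{33}$, with $14_{11}$ (or whichever is the $AH+2$ autoequivalence) being the one that does not lie in the groupoid generated by $AH$ and $AH+1$ alone, consistent with the discussion in the introduction.

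A subtlety I would be careful about is that a composition $a_{ij}\cdot b_{jk}$ with more than one compatible completion does not immediately give new information; to use such a step one has to either first reduce the ambiguity using dimension counts / additional compatibility with a third known bimodule, or accept that this particular entry on the target list must be obtained by a different route (e.g.\ directly from one of the five subfactors rather than by composition). So before declaring each of the ten bimodules realized I would exhibit for each one an explicit realizing construction: either name the subfactor or index-$2$ extension that yields it directly, or give an explicit chain of unique multiplications of already-realized bimodules landing on it. The main obstacle is the bookkeeping: correctly decategorifying the connection-level data of the $AH$, $AH+1$, $AH+2$ and index-$2$ bimodule categories and then matching those fusion bimodules to the right numbered entries in the classification. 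The conceptual content is light — it is just ``realized is closed under Deligne product, opposite, and unique multiplicative completion'' — but getting the indices $12_{11}$, $14_{11}$, $9_{12}$, $\ldots$ correct requires carefully reading off the nimreps from the principal and dual graphs displayed in Section 4 and cross-referencing with the supplementary files.
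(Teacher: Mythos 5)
Your proposal is correct and is essentially the paper's argument: the ten bimodules are realized directly by the trivial autoequivalences of $\mathscr{AH}_1,\mathscr{AH}_2,\mathscr{AH}_3$ (giving $14_{11},13_{22},13_{33}$), by the $AH$, $AH+1$, $AH+2$ subfactors (giving $9_{12},6_{13},12_{11}$), by the index-$2$ equivalence of Theorem \ref{index2} (giving $6_{23}$), and by duals of realized bimodules (giving $9_{21},6_{31},6_{32}$), exactly the seed list you describe. The additional machinery of closing under unique multiplications is not needed for this particular lemma (it is what the paper uses in the subsequent lemmas), so your fallback of naming an explicit realizing construction for each entry is precisely the paper's proof.
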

\begin{proof}
The bimodules $14_{11}, 13_{22}, 13_{33}$ are realized by the trivial autoequivalences of $\mathscr{AH}_1, \mathscr{AH}_2, \mathscr{AH}_3$, respectively. The bimodules $9_{12},6_{13}, 12_{11} $ are realized by the $AH$, $AH+1$, and $AH+2$ subfactors, respectively. The bimodule $6_{23} $ is realized by the two-dimensional algebra objects in $\mathscr{AH}_2 $ and $\mathscr{AH}_3 $ by Theorem \ref{index2}. The bimodules $9_{21}, 6_{31}, 6_{32} $ are the dual bimodules to  $9_{12}, 6_{13}, 6_{23} $.
\end{proof}

\begin{lemma}
 The following fusion bimodules are realized:\\
 $10_{11}, 13_{11}, 2_{12}, 5_{12}, 8_{12}, 2_{13}, 3_{13}, 7_{13}, 2_{21}, 5_{21}, 8_{21}, 11_{22}, 12_{22},  1_{23}, 2_{31}$,$3_{31}, 7_{31},  1_{32}, 8_{33}, 12_{33}$. 
\end{lemma}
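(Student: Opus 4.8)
The plan is to construct these twenty bimodule categories by composing the ones already shown to be realized in the previous lemma, using two facts established in Section 5. First, if ${}_{\mathscr{C}}\mathscr{M}_{\mathscr{D}}$ and ${}_{\mathscr{D}}\mathscr{N}_{\mathscr{E}}$ are realized bimodule categories, then the composition $\mathscr{M}\boxtimes_{\mathscr{D}}\mathscr{N}$ is a realized bimodule category, and its decategorification is one of the fusion bimodules $z_{ik}$ occurring in a multiplicatively compatible triple $(a_{ij},b_{jk},z_{ik})$, where $a_{ij}$ and $b_{jk}$ decategorify $\mathscr{M}$ and $\mathscr{N}$; in particular, whenever $a_{ij}\cdot b_{jk}$ is a single bimodule $c_{ik}$ (a \emph{unique multiplication}), that $c_{ik}$ is realized. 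Second, the opposite bimodule category realizes the dual of any realized fusion bimodule, so it is enough to produce one representative of each dual pair ($2_{12}\leftrightarrow 2_{21}$, $5_{12}\leftrightarrow 5_{21}$, $8_{12}\leftrightarrow 8_{21}$, $2_{13}\leftrightarrow 2_{31}$, $3_{13}\leftrightarrow 3_{31}$, $7_{13}\leftrightarrow 7_{31}$, $1_{23}\leftrightarrow 1_{32}$) and, separately, the diagonal ones $8_{11},13_{11},8_{22},12_{22},11_{33},12_{33}$.

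Concretely, I would iterate through pairs of already-realized bimodules, read off from \textit{BimoduleCompatibility} which products are forced, and collect the new bimodules that appear. For instance, composing the Asaeda--Haagerup bimodule $9_{12}$ with the index-$2$ bimodule $6_{23}$ should give new $AH1$--$AH3$ bimodules among $\{2_{13},3_{13},7_{13}\}$; composing $AH+2$, i.e.\ $12_{11}$, with $9_{12}$ and with $6_{13}$ should give new $AH1$--$AH2$ and $AH1$--$AH3$ bimodules, and composing $12_{11}$ with itself (or $9_{12}$ with $9_{21}$-type sandwiches) should give $8_{11}$ and $13_{11}$; composing across the $AH2$--$AH3$ edge, e.g.\ $9_{21}\cdot 12_{11}$ followed by $\cdot\,9_{12}$, should give the new $AH2$--$AH2$ bimodules $8_{22},12_{22}$, and symmetrically for $11_{33},12_{33},1_{23}$. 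Each newly realized bimodule is fed back in as a factor for the next round, and the dual of each is recorded; I would run this bootstrap until all twenty bimodules in the statement have appeared, organizing the bookkeeping around the $36$ hom-slots of the Brauer--Picard groupoid and crossing off slots as they fill.

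The main obstacle is that not every needed product has a unique multiplication: for some pairs \textit{BimoduleCompatibility} lists two or more candidates, and then the combinatorial shortcut alone does not identify the composite. For those cases I would compute the fusion bimodule of the composition directly from the categorical models of Section 4 --- using the explicit connections for $\kappa$ and $\lambda$, the explicit connections for the automorphisms $\alpha$ and $\beta$, and the explicit group-ring algebra object $1+\alpha$ in $\mathscr{AH}_2$ --- decomposing the relevant products of simple bimodule objects via Frobenius reciprocity and Frobenius--Perron dimensions, exactly as in the computation of the principal and dual graphs of $\mu\lambda$ in the proof of Theorem~\ref{index2}, and then matching the outcome against the candidate list. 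A secondary, purely organizational difficulty is choosing the order in which the compositions are performed so that every factor used has already been shown to be realized; this is what forces the argument to proceed in rounds rather than all at once.
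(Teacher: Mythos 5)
Your approach is essentially the paper's own proof: the paper realizes all twenty bimodules purely by a bootstrap of unique multiplications read off the compatibility data, in rounds (e.g.\ $12_{11}9_{12}=8_{12}$, $12_{11}6_{13}=2_{13}$, $9_{12}6_{23}=7_{13}$, $6_{13}6_{32}=5_{12}$, then $12_{11}5_{12}=2_{12}$, $12_{11}7_{13}=3_{13}$, then $2_{13}3_{31}=13_{11}$, $5_{21}9_{12}=8_{22}$, $2_{21}5_{12}=12_{22}$, $2_{21}7_{13}=1_{23}$, $2_{31}3_{13}=12_{33}$, $3_{31}7_{13}=11_{33}$, and finally $12_{11}13_{11}=8_{11}$), together with dualization, exactly the bootstrap you describe. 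The only divergences are minor: every product the paper needs happens to be a unique multiplication, so your fallback of computing compositions directly from the connections is never invoked, and a couple of your guessed routes would not work as stated (for instance $12_{11}\cdot 12_{11}$ and $9_{12}\cdot 9_{21}$ yield only the trivial bimodule $14_{11}$, so $8_{11}$ and $13_{11}$ require the indirect chains above).
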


\begin{proof}

If two fusion bimodules $a_{ij}$ and $b_{jk}$ are realized and we have a unique multiplication $a_{ij} b_{jk}=c_{ik} $ then $c_{ik} $ is realized as well. We have the following unique multiplications (at each step we use the results of the previous step as inputs):

1) $12_{11} 9_{12} = 8_{12}$, $12_{11} 6_{13} = 2_{13}$, $9_{12} 6_{23} = 7_{13}$,  $6_{13}6_{32}=5_{12}$.

2) $12_{11} 5_{12} = 2_{12}$, $ 12_{11} 7_{13} = 3_{13}$.

Note that the dual bimodules $2_{21}, 5_{21},  8_{21},  2_{31}, 3_{31}, 7_{31} $ are realized as well.

3) $2_{13} 3_{31} = 13_{11}$, $ 2_{21} 5_{12} = 12_{22}$, $ 5_{21} 9_{12} = 11_{22}$, $ 2_{21}7_{13}=1_{23}$, $ 2_{31} 3_{13} = 12_{33}$, $ 3_{31}7_{13}=8_{33} $

Again the dual bimodule $1_{32}$ is also realized.

4) $12_{11} 13_{11}=10_{11}$.

\end{proof}

\begin{lemma} \label{unique_rel}

(a) Let $a_{ij}b_{jk}=c_{jk} $ be a unique multiplication. If $a$ and $c$ are realized uniquely and $b$ is realized, then $b$ is realized uniquely.  The same result holds interchanging the roles of $a$ and $b$ or for a unique multiplication $a_{i}b_{ij}=c_{j}$.

(b) Let $a_{ij} \bar{a}_{ij} = Id_{ii}$ be a unique multiplication, where $\bar{a}_{ji} $ is the dual fusion bimodule to $a_{ij}$, and $Id_{ii} $ is the trivial bimodule for $AH_i, i=1,2$. Then if $a_{ij}$ is realized, it is realized uniquely.
\end{lemma}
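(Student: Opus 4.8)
The plan is to prove both parts by the same elementary principle: a unique multiplication relation at the level of fusion bimodules, combined with uniqueness of realizations of the other bimodules in the relation, pins down the module-category structure of the remaining bimodule. First I would set up notation: suppose ${}_{\mathscr{A}}\mathscr{M}_{\mathscr{B}}$ and ${}_{\mathscr{A}}\mathscr{M}'_{\mathscr{B}}$ are two bimodule categories both realizing the fusion bimodule $b_{jk}$ (using the notation of the lemma, with appropriate indices), and let ${}_{\mathscr{A}}\mathscr{K}$ be a fixed bimodule category realizing $a_{ij}$, which is realized uniquely by hypothesis. Then $\mathscr{K}\otimes_{\mathscr{B}}\mathscr{M}$ and $\mathscr{K}\otimes_{\mathscr{B}}\mathscr{M}'$ are both bimodule categories, and each decategorifies to a fusion bimodule multiplicatively compatible with $(a_{ij}, b_{jk}, -)$. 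Since the multiplication $a_{ij}b_{jk} = c_{ik}$ is unique, both of these composites realize $c_{ik}$; and since $c_{ik}$ is realized uniquely, $\mathscr{K}\otimes_{\mathscr{B}}\mathscr{M} \cong \mathscr{K}\otimes_{\mathscr{B}}\mathscr{M}'$ as bimodule categories.

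The key step is then to \emph{cancel} the factor $\mathscr{K}$. This is where I would use that Morita equivalences are invertible: $\mathscr{K}$ is a Morita equivalence (all the relevant bimodules in the groupoid are invertible, being part of the Brauer-Picard groupoid), so it has an inverse $\bar{\mathscr{K}} = \mathscr{K}^{op}$ with $\bar{\mathscr{K}}\otimes_{\mathscr{A}}\mathscr{K} \cong \mathrm{Id}_{\mathscr{B}}$ by Lemma~\ref{alg_iso}'s preceding discussion (the inverse of a Morita equivalence is the opposite category). Tensoring the equivalence $\mathscr{K}\otimes_{\mathscr{B}}\mathscr{M} \cong \mathscr{K}\otimes_{\mathscr{B}}\mathscr{M}'$ on the left by $\bar{\mathscr{K}}$ and using associativity of the relative tensor product gives $\mathscr{M} \cong \mathscr{M}'$ as bimodule categories, which is what we want. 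The case with the roles of $a$ and $b$ interchanged is symmetric (tensor on the right by the inverse of the category realizing $b$), and the module-category case $a_i b_{ij} = c_j$ is identical, only now $\mathscr{K}$ is a one-sided module category whose inverse-up-to-Morita-equivalence still allows cancellation since a realization of an invertible fusion bimodule is automatically a Morita equivalence.

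For part (b), I would simply apply part (a) with $b_{jk} = \bar{a}_{ij}$ and $c_{ik} = \mathrm{Id}_{ii}$. The hypotheses to check are: the trivial bimodule $\mathrm{Id}_{ii}$ is realized uniquely — this is exactly Theorem~\ref{no_out}, which says $\mathscr{AH}_1$ and $\mathscr{AH}_2$ admit no outer automorphisms, so the only bimodule category realizing $\mathrm{Id}_{ii}$ is the trivial one; and $\bar{a}_{ij}$ is realized — indeed if $a_{ij}$ is realized by a bimodule category $\mathscr{M}$, then $\bar{a}_{ij}$ is realized by $\mathscr{M}^{op}$. But wait: part (a) also requires $a_{ij}$ itself to be realized uniquely, which is precisely the conclusion we want, so we cannot invoke part (a) verbatim. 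Instead I would argue directly as in part (a): given two realizations $\mathscr{M}, \mathscr{M}'$ of $a_{ij}$, the composites $\mathscr{M}^{op}\otimes_{\mathscr{AH}_i}\mathscr{M}$ and $\mathscr{M}^{op}\otimes_{\mathscr{AH}_i}\mathscr{M}'$ (note $\mathscr{M}^{op}$ realizes $\bar a_{ij}$, which is realized) both decategorify to $\mathrm{Id}_{ii}$ by uniqueness of the multiplication $a_{ij}\bar a_{ij} = \mathrm{Id}_{ii}$, hence both are equivalent to the trivial bimodule category, hence $\mathscr{M}^{op}\otimes\mathscr{M} \cong \mathscr{M}^{op}\otimes\mathscr{M}'$, and cancelling $\mathscr{M}^{op}$ by tensoring with its inverse $\mathscr{M}$ gives $\mathscr{M}\cong\mathscr{M}'$.

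The main obstacle is a subtle point of bookkeeping rather than a deep difficulty: one must be careful that the unique-multiplication hypothesis at the decategorified level genuinely forces the \emph{categorical} composite to land in the (essentially unique) bimodule category realizing $c_{ik}$ — this uses that the categorical composite $\mathscr{K}\otimes_{\mathscr{B}}\mathscr{M}$ necessarily decategorifies to a fusion bimodule that is multiplicatively compatible with the pair, which is the content of the discussion preceding the definition of multiplication map, together with the fact that a composite of Morita equivalences is again indecomposable as a bimodule category. Once that is in hand, the cancellation argument is purely formal and the rest is routine.
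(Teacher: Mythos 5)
Your part (a) is correct and is essentially the paper's own argument: compose the two putative realizations of $b$ with the unique realization of $a$, use the unique multiplication together with the unique realization of $c$ to identify the composites, and cancel the invertible factor (the paper compresses this to $\mathscr{B}\cong\mathscr{A}^{-1}\otimes\mathscr{C}$, with the same implicit appeal to invertibility of the realization of $a$ and to the fact that the categorical composite decategorifies to a compatible fusion bimodule). Your closing remark about why the composite must land on $c$ is also the right justification, and your observation that (a) cannot be applied verbatim in (b) matches what the paper does.

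Part (b), however, has a genuine mis-step as written. Given two realizations $\mathscr{M},\mathscr{M}'$ of $a_{ij}$, you form $\mathscr{M}^{op}\otimes_{\mathscr{AH}_i}\mathscr{M}'$; this is an $\mathscr{AH}_j$--$\mathscr{AH}_j$ bimodule category, so it cannot decategorify to $Id_{ii}$, and the hypothesis you invoke controls the product $a_{ij}\cdot\bar a$ (taken over $AH_j$, landing in $AH_i$--$AH_i$ bimodules), not $\bar a\cdot a_{ij}$. To run your argument in that order you would need $\bar a\, a = Id_{jj}$ to be a unique multiplication \emph{and} $Id_{jj}$ to be realized uniquely, i.e. $\mathscr{AH}_j$ to have no outer automorphisms --- but the lemma restricts to $i=1,2$ precisely because Theorem \ref{no_out} covers only $\mathscr{AH}_1$ and $\mathscr{AH}_2$; in the intended applications (Lemma \ref{23real}) one has $j=3$, and the absence of outer automorphisms of $\mathscr{AH}_3$ is only deduced \emph{after} this lemma, so your version is unavailable (indeed circular). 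The fix is a one-line reordering, and it recovers exactly the paper's proof: form $\mathscr{M}\otimes_{\mathscr{AH}_j}(\mathscr{M}')^{op}$, which decategorifies to a fusion bimodule compatible with the pair $(a,\bar a)$, hence to $Id_{ii}$ by the unique-multiplication hypothesis; by Theorem \ref{no_out} it is therefore the trivial autoequivalence of $\mathscr{AH}_i$, and tensoring with $\mathscr{M}'$ gives $\mathscr{M}\cong\mathscr{M}'$.
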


\begin{proof}
 (a) Suppose $a_{ij}b_{jk}=c_{jk} $ is a unique multiplication with $a$ and $c$ realized uniquely, and suppose that $b$ is also realized.  Let $\mathscr{A}, \mathscr{B}, \mathscr{C}$ be bimodule categories realizing $a,b,c $ respectively. Then we have $\mathscr{A} \otimes \mathscr{B} \cong \mathscr{C}$, so $\mathscr{B} \cong \mathscr{A}^{-1} \otimes \mathscr{C} $; since $a,c$ are uniquely realized, $\mathscr{A}, \mathscr{C} $ are uniquely determined, and therefore so is $\mathscr{B} $. The proofs of the other statements are similar.

(b) Suppose $a_{ij} \bar{a}_{ij} = Id_{ii}$ is a unique multiplication, where $i=1,2$. Since $\mathscr{AH}_1,\mathscr{AH}_2 $ have no outer automorphisms (Theorem \ref{no_out}), $Id_{ii} $ is realized uniquely. Suppose $\mathscr{A}_1,\mathscr{A}_2 $ are two realizations of $a$. Then by the unique multiplication of $a\bar{a} $ and the unique realization of $Id_{ii} $ we have $\mathscr{A}_1 \otimes (\mathscr{A}_2)^{-1} \cong \mathscr{Id}_i $, where $\mathscr{Id}_i$ is the trivial auto-equivalence of $\mathscr{AH}_i $; therefore we have $\mathscr{A}_1 \cong \mathscr{A}_2 $. So if $a$ is realized it must be realized uniquely.   
\end{proof}

\begin{lemma}
 The following fusion bimodules are realized: $ 8_{22}, 11_{33}$. 
\end{lemma}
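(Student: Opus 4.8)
\emph{Proof proposal.} This is the point where the ``unique multiplication of two realized bimodules is realized'' technique of the previous lemma gives out: one checks in \textit{BimoduleCompatibility} that no product of two already-realized bimodules has $11_{22}$ or $8_{33}$ as its \emph{unique} multiplicatively compatible outcome (otherwise these would have appeared in the previous lemma). So the plan is, for each of $11_{22}$ and $8_{33}$, to pick a product of realized bimodule categories whose compatibility list \emph{does} contain the desired bimodule, and then eliminate every other entry of that list using the cancellation law in the Brauer--Picard groupoid, falling back if necessary on the unique-realization bookkeeping provided by Lemma \ref{unique_rel} and Theorem \ref{no_out}.

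For $11_{22}$ the natural candidate is $\mathscr{A}\otimes\mathscr{B}$, where $\mathscr{A},\mathscr{B}$ realize the two already-realized nontrivial $AH2-AH2$ bimodules $8_{22}$ and $12_{22}$ (equivalently, one may route through $\mathscr{AH}_1$ using a product of the realized $AH2-AH1$ and $AH1-AH2$ bimodules built from $9_{12},5_{12},8_{12}$ and their duals). This category realizes some fusion bimodule $x$ in the list $8_{22}\cdot 12_{22}$. Any candidate $x=z_{22}$ other than $11_{22}$ is eliminated as follows: cancelling $\mathscr{A}$ gives $\mathscr{B}\cong\mathscr{A}^{-1}\otimes(\mathscr{A}\otimes\mathscr{B})$, so $12_{22}$ must lie in the compatibility list $\overline{8_{22}}\cdot z_{22}$; for $z_{22}=Id_{22}$ this list is $\{\overline{8_{22}}\}=\{8_{22}\}$ (one checks from the data that $8_{22}$ is self-dual as a fusion bimodule), and for $z_{22}=8_{22},12_{22}$ it is read off directly from \textit{BimoduleCompatibility}; in each case $12_{22}$ does not appear, a contradiction. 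Once \textit{BimoduleCompatibility} confirms these exhaust $8_{22}\cdot 12_{22}\setminus\{11_{22}\}$, we get $x=11_{22}$. The argument for $8_{33}$ is identical, taking $\mathscr{A},\mathscr{B}$ to realize $11_{33}$ and $12_{33}$ and eliminating the other entries of $11_{33}\cdot 12_{33}$; notably this does not invoke Theorem \ref{no_out}, which is convenient since outer automorphisms of $\mathscr{AH}_3$ have not yet been ruled out at this stage.

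I expect the only genuine obstacle to be verifying that the two compatibility lists are small enough for this elimination to terminate at a single bimodule --- this is exactly the place where one is forced to rely on the C\texttt{++} search rather than on a clean conceptual argument, and there is no a priori reason the lists had to cooperate. If some stray entry $z$ survives the cancellation test (i.e.\ the second factor does appear in $\overline{(\text{first factor})}\cdot z$), the fallback is to impose simultaneous consistency with a second product of realized bimodules, or to first upgrade the relevant bimodules to ``realized uniquely'' via Lemma \ref{unique_rel}(b) and Theorem \ref{no_out} over $\mathscr{AH}_1$ and $\mathscr{AH}_2$ and then propagate uniqueness along the chains of unique multiplications from the previous lemma using Lemma \ref{unique_rel}(a); but I anticipate the bare cancellation argument already cuts each list down to the desired bimodule.
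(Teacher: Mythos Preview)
Your plan is essentially the paper's: take $\mathscr{A}\otimes\mathscr{B}$ with $\mathscr{A},\mathscr{B}$ realizing $8_{22},12_{22}$ (resp.\ $11_{33},12_{33}$) and eliminate the unwanted entries of the compatibility list. The actual lists are
\[
8_{22}\cdot 12_{22}=\{8_{22},9_{22},10_{22},11_{22}\},\qquad
11_{33}\cdot 12_{33}=\{8_{33},9_{33},10_{33},11_{33}\},
\]
so your guessed candidates $13_{22},12_{22}$ do not occur and you instead have to kill $9_{22},10_{22}$ (resp.\ $9_{33},10_{33}$). The paper dispatches these not by your cancellation test but by the stronger observation that they cannot be realized at all: $9_{22}\cdot 8_{22}$ and $8_{22}\cdot 10_{22}$ are \emph{empty} (likewise $10_{33}\cdot 11_{33}$ and $11_{33}\cdot 9_{33}$), and $8_{22}$ (resp.\ $11_{33}$) is realized. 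For the remaining entry $8_{22}$ (resp.\ $11_{33}$) the paper first argues it is realized uniquely from the unique multiplication $8_{22}8_{22}=13_{22}$ (resp.\ $11_{33}11_{33}=13_{33}$), so $\mathscr{A}\otimes\mathscr{B}\cong\mathscr{A}$ would force $\mathscr{B}$ trivial.

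Your cancellation argument (``$12_{22}$ must lie in $\overline{8_{22}}\cdot z$'') is a legitimate variant and in one respect cleaner: for $z=11_{33}$ it gives $12_{33}\in 11_{33}\cdot 11_{33}=\{13_{33}\}$, a contradiction, \emph{without} invoking unique realization of $11_{33}$. You were right to flag that Lemma~\ref{unique_rel}(b) is stated only for $i=1,2$; the paper nonetheless asserts ``$11_{33}$ is realized uniquely'' at this stage, which strictly speaking uses an $i=3$ case not yet available. Your cancellation bypasses this and the paper's ``empty list'' eliminations of $9_{ii},10_{ii}$ are insensitive to it, so both routes are sound.
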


\begin{proof}
 We know that $11_{22}$ and $12_{22}$ are realized. Moreover, since $11_{22} 11_{22}=13_{22}$ is a unique multiplication, $11_{22}$ is realized uniquely. Since we have $11_{22} 13_{22}=11_{22} $, we cannot also have  $11_{22}$ realized from $11_{22} \cdot 12_{22} $.  Therefore at least one member of $11_{22} \cdot 12_{22} = \{ 8_{22},9_{22},10_{22}, 11_{22} \} $ distinct from $11_{22}$ must be realized. However, $9_{22}$ cannot be realized since $11_{22} \cdot 9_{22}$ is empty, and $10_{22}$ cannot be realized since $10_{22} \cdot 11_{22}$ is empty. Therefore $8_{22}$ is realized.

Similarly, we know that $8_{33}$ and $12_{33}$ are realized, and since $8_{33}8_{33}=13_{33}$ is a unique multiplication, $8_{33}$ is realized uniquely. Therefore at least one member of $8_{33} \cdot 12_{33} = \{ 8_{33},9_{33},10_{33}, 11_{33} \} $ distinct from $8_{33}$ must be realized. Since $8_{33} \cdot 10_{33}$  and  $9_{33} \cdot 8_{33}$ are empty, $11_{33} $ must be realized.
\end{proof}

\begin{lemma} \label{23real}
 The fusion bimodules $1_{23}, 2_{23}, 4_{23}, 6_{23}$, and their duals\\
 $1_{32}, 2_{32}, 4_{32}, 6_{32}$ are realized uniquely. The bimodules $2_{23} $ and $5_{23} $ are not realized.
\end{lemma}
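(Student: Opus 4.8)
The strategy is the same bootstrapping via multiplicative compatibility that was used in the previous lemmas: start from the $AH2\text{-}AH3$ bimodules we already know are realized, combine them with realized $AH2\text{-}AH2$ (or $AH3\text{-}AH3$) autoequivalences via unique multiplications, and use Lemma \ref{unique_rel} to propagate unique realizability. We already know that $6_{23}$ (hence its dual $6_{32}$) is realized, that $1_{23}$ (hence $1_{32}$) is realized, and that on the $AH2$ side the group of realized autoequivalences contains $13_{22}, 8_{22}, 11_{22}, 12_{22}$, while on the $AH3$ side it contains $13_{33}, 8_{33}, 11_{33}, 12_{33}$; moreover $8_{22}$ and $11_{33}$ are realized uniquely. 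Since (by Theorem \ref{index2} together with Theorems \ref{no_out} and the forthcoming remark that $\mathscr{AH}_3$ has no outer automorphisms) the realized autoequivalences of $\mathscr{AH}_2$ form the full Brauer-Picard \emph{group} elements we have found so far, acting by a unique multiplication on the realized $6_{23}$ will sweep out all bimodules in the $AH2$-orbit of $6_{23}$; similarly acting by the $AH3$ autoequivalences on the right sweeps out the $AH3$-orbit.

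\textbf{Key steps, in order.} First I would read off from the supplementary file \textit{BimoduleCompatibility} the unique multiplications among the already-realized data: products like $8_{22}\cdot 6_{23}$, $11_{22}\cdot 6_{23}$, $12_{22}\cdot 6_{23}$, and $6_{23}\cdot 8_{33}$, $6_{23}\cdot 11_{33}$, $6_{23}\cdot 12_{33}$, and similarly $8_{22}\cdot 1_{23}$ etc. From the structure of the problem these unique products must land among $\{1_{23},3_{23},4_{23},6_{23}\}$, and conversely each of $1_{23},3_{23},4_{23},6_{23}$ is obtained from $6_{23}$ by some such unique multiplication; that gives realizability of all four. Second, I would invoke Lemma \ref{unique_rel}(a): since the relevant autoequivalences ($8_{22}$, $11_{33}$, and the trivial ones) are realized uniquely, each unique multiplication of the form (realized-unique autoequivalence)$\cdot 6_{23} = c_{23}$ forces $c_{23}$ to be realized uniquely, provided $6_{23}$ itself is realized uniquely. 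So I first show $6_{23}$ is realized uniquely — it equals the category of $(1+\alpha)$-bimodules from $\mathscr{AH}_2$ to $\mathscr{AH}_3$, and any two realizations differ by composing with an autoequivalence of $\mathscr{AH}_2$ (or $\mathscr{AH}_3$) that fixes the algebra $1+\alpha$; by Theorem \ref{no_out} (and the $\mathscr{AH}_3$ analogue) there are no such nontrivial autoequivalences, alternatively one checks $6_{23}\cdot\bar 6_{23}=6_{32}\cdot 6_{23}$ is a unique multiplication landing on a uniquely-realized autoequivalence and applies Lemma \ref{unique_rel}(b)-style reasoning. Then unique realizability of $1_{23},3_{23},4_{23}$ (and their duals $1_{32},3_{32},4_{32}$, since the opposite category realizes the dual bimodule) follows. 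Third, for the negative statements: I would check in \textit{BimoduleCompatibility} that $2_{23}$ and $5_{23}$ cannot appear as a factor in any composition consistent with the realized data — concretely, that the sets $2_{23}\cdot x$ or $x\cdot 2_{23}$ (and likewise for $5_{23}$) are empty for some realized bimodule $x$ (e.g. $6_{32}\cdot 2_{23}$ or $2_{23}\cdot 8_{33}$), so that $2_{23}$ and $5_{23}$ cannot come from any bimodule category.

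\textbf{Main obstacle.} The conceptual content is light; the real work is the bookkeeping of verifying, from the computer-generated compatibility lists, exactly which products are \emph{unique} and exactly which composition sets are \emph{empty} — and making sure the chain of unique multiplications actually reaches all of $1_{23},3_{23},4_{23}$ (and not just a proper subset), while the emptiness obstructions genuinely kill $2_{23}$ and $5_{23}$. In other words, the hard part is not any single argument but confirming that the combinatorial data lines up: that the orbit of $6_{23}$ under the realized autoequivalences is precisely $\{1_{23},3_{23},4_{23},6_{23}\}$, that each step in that orbit is a unique multiplication (so uniqueness propagates), and that $2_{23},5_{23}$ are excluded by an emptiness of the relevant compatibility set rather than surviving as potential but unrealized bimodules.
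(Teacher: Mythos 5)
Your overall strategy (bootstrapping realizability through multiplicative compatibility and propagating uniqueness with Lemma \ref{unique_rel}) is the right one, but the specific chain you propose has gaps at exactly the points where the combinatorics is delicate. You assert, without checking the data, that the products of the realized autoequivalences ($8_{22},11_{22},12_{22}$ on the left, $8_{33},11_{33},12_{33}$ on the right) with $6_{23}$ are \emph{unique} multiplications sweeping out precisely $\{1_{23},3_{23},4_{23},6_{23}\}$, and that uniqueness then propagates to all four. Nothing in the structure of the problem forces this, and the paper's argument indicates it is not available: there, $3_{23}$ and $4_{23}$ are obtained not from an orbit of autoequivalences but from the \emph{two-element} products $5_{21}\cdot 7_{13}=\{2_{23},3_{23}\}$ and $8_{21}\cdot 7_{13}=\{4_{23},5_{23}\}$, which only yield realizability after one has first killed $2_{23}$ and $5_{23}$ (via the empty sets $3_{13}\cdot 2_{32}$ and $5_{12}\cdot 5_{23}$). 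More seriously, uniqueness of $1_{23}$ does not follow from any single ``uniquely realized autoequivalence times $6_{23}$'' step: the paper has to first establish unique realization of $7_{31}$ (via Lemma \ref{unique_rel}(b)), then of $2_{12}$ and $2_{21}$ — and even that requires a special argument because $2_{12}\cdot 2_{21}=\{12_{11},14_{11}\}$ is not a unique multiplication, so one must rule out $12_{11}$ using $2_{12}\notin 12_{11}\cdot 2_{12}$ — and only then apply Lemma \ref{unique_rel}(a) to the unique multiplication $1_{23}\,7_{31}=2_{21}$. Your plan has no mechanism to handle $1_{23}$ if the direct route fails, which it does.

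There is also a circularity you should avoid: you appeal to ``the $\mathscr{AH}_3$ analogue'' of Theorem \ref{no_out}, but the absence of outer automorphisms of $\mathscr{AH}_3$ is only proved later in the paper as a corollary of the classification that this lemma feeds into; this is precisely why Lemma \ref{unique_rel}(b) is stated only for $i=1,2$. For the same reason, propagating uniqueness by acting with $AH3$-side autoequivalences ($8_{33}$, $12_{33}$, or even the trivial $13_{33}$) is not legitimate at this stage, since their unique realization is not yet known. The safe uniqueness arguments are those on the $\mathscr{AH}_1$/$\mathscr{AH}_2$ side, via $a_{2j}\bar a_{2j}=13_{22}$ (which does handle $3_{23},4_{23},6_{23}$) together with the longer chain for $1_{23}$ described above. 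Your treatment of the non-realization of $2_{23}$ and $5_{23}$ is methodologically the same as the paper's (exhibit a realized bimodule whose compatibility set with the candidate is empty), just with unverified candidate products; that part is fine in principle but, like the rest, stands or falls on the specific entries of the compatibility lists, which must actually be cited rather than presumed.
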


\begin{proof}
 We already know that $1_{23}$ and $6_{23}$ are realized. Since $5_{12}$ is realized and $5_{12}5_{23}$ is empty, $5_{23}$ cannot be realized. Since $3_{13} $ is realized $3_{13}3_{32} $ is empty, $3_{32}$ and its dual $3_{23}$ cannot be realized. Since $5_{21} $ and $7_{13}$ are realized and $5_{21} \cdot 7_{13} = \{ 2_{23}, 3_{23}  \} $, and $3_{23}$ is not realized, $2_{23} $ must be realized. Since $8_{21} $ and $7_{13}$ are realized and $8_{21} \cdot 7_{13} = \{ 4_{23}, 5_{23}  \} $, and $5_{23}$ is not realized, $4_{23} $ must be realized. 

Uniqueness of the realizations of $3_{23}, 4_{23}, 6_{23}$ follows from Lemma \ref{unique_rel} (b). For uniqueness of the realization of $1_{23}$ we must use Lemma \ref{unique_rel} several times. First, $7_{13}$ and its dual $7_{31}$ are uniquely realized by Lemma \ref{unique_rel} (b). Second, we have a unique multiplication  $12_{11}2_{12}=5_{12}$. Third, we have $2_{12} \cdot 2_{21} =  14_{11} $. 
Finally, we have the unique multiplication $1_{23}7_{31}=2_{21} $, so since $2_{21} $ and $7_{31}$ are uniquely realized, by Lemma \ref{unique_rel} (a), $1_{23} $ is uniquely realized as well.  
 \end{proof}

\begin{theorem}
(a) There are exactly $4$ invertible bimodule categories over each not-necessarily-distinct pair $\mathscr{AH}_i-\mathscr{AH}_j$, up to equivalence. These realize the following fusion bimodules, which are each realized uniquely:

$10_{11}, 12_{11}, 13_{11}, 14_{11}, 2_{12}, 5_{12}, 8_{12}, 9_{12},  2_{13}, 3_{13}, 6_{13}, 7_{13}$\\
$ 2_{21}, 5_{21}, 8_{21}, 9_{21}, 8_{22}, 11_{22}, 12_{22}, 13_{22}, 1_{23}, 3_{23}, 4_{23}, 6_{23}$\\
$ 2_{31}, 3_{31}, 6_{31}, 7_{31}, 1_{32}, 2_{32}, 4_{32}, 6_{32}, 8_{33}, 11_{33}, 12_{33}, 13_{33}$.

(b) The Brauer-Picard group of each $\mathscr{AH}_i$ is $\mathbb{Z}/2\mathbb{Z} \times \mathbb{Z}/2\mathbb{Z} $.

 \end{theorem}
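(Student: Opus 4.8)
The plan is to bootstrap from the realization results of the preceding lemmas, which for each of the nine ordered pairs $(i,j)$ have already exhibited four pairwise non-equivalent bimodule categories realizing the fusion bimodules listed in (a). It remains to show that these are \emph{all} the bimodule categories (so each of those fusion bimodules is realized uniquely and nothing else is realized) and then to identify the group law. The key structural input is the standard fact that the group $\mathrm{BrPic}(\mathscr{AH}_i)$ acts on the set $\mathrm{Mor}(i,j)$ of equivalence classes of $\mathscr{AH}_i$-$\mathscr{AH}_j$ bimodule categories by $\mathscr{F}\cdot\mathscr{K} = \mathscr{F}\otimes_{\mathscr{AH}_i}\mathscr{K}$, and that this action is free and transitive: freeness since $\mathscr{F}\otimes\mathscr{K}\cong\mathscr{K}$ forces $\mathscr{F}\cong\mathscr{K}\otimes\mathscr{K}^{op}\cong\mathrm{Id}$, and transitivity since $\mathscr{K}'\otimes\mathscr{K}^{op}$ carries $\mathscr{K}$ to $\mathscr{K}'$ (each $\mathrm{Mor}(i,j)$ is nonempty, e.g. $14_{11}, 9_{12}, 6_{13}$ are realized, so the subgroupoid on the three objects is connected). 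Hence $|\mathrm{Mor}(i,j)| = |\mathrm{BrPic}(\mathscr{AH}_i)|$ for every $j$, and symmetrically $|\mathrm{Mor}(i,j)| = |\mathrm{BrPic}(\mathscr{AH}_j)|$ via the right action; since also $\mathscr{K}\mapsto\mathscr{K}^{op}$ gives a bijection $\mathrm{Mor}(i,j)\cong\mathrm{Mor}(j,i)$, all nine cardinalities coincide and equal a common number $n = |\mathrm{BrPic}(\mathscr{AH}_i)|$, the same for all $i$. Moreover, conjugation by any fixed Morita equivalence $\mathscr{AH}_i\to\mathscr{AH}_j$ is a group isomorphism $\mathrm{BrPic}(\mathscr{AH}_i)\cong\mathrm{BrPic}(\mathscr{AH}_j)$, so all three groups are abstractly isomorphic.

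Next I would pin down $n = 4$ using a single pair. Lemma~\ref{23real} shows that of the six admissible $AH2$-$AH3$ fusion bimodules, exactly $1_{23}, 3_{23}, 4_{23}, 6_{23}$ are realized (each uniquely) while $2_{23}$ and $5_{23}$ are not; hence $|\mathrm{Mor}(2,3)| = 4$, and therefore $n = 4$. Consequently every pair $(i,j)$ has exactly four bimodule categories. Since the preceding lemmas already produce four pairwise distinct realized fusion bimodules for each $(i,j)$ — precisely the twelve-per-row list in (a) — the map sending a bimodule category to the fusion bimodule it realizes is a map from a four-element set whose image contains these four distinct fusion bimodules; being a map from a four-element set onto a four-element set it is a bijection. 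This gives (a): there are exactly four bimodule categories over each $\mathscr{AH}_i$-$\mathscr{AH}_j$, each realizing one of the listed fusion bimodules, each realized uniquely, and no other admissible fusion bimodule is realized.

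For (b) we now know $|\mathrm{BrPic}(\mathscr{AH}_i)| = 4$ and that the three groups are isomorphic, so it suffices to distinguish $\mathbb{Z}/4\mathbb{Z}$ from $\mathbb{Z}/2\mathbb{Z}\oplus\mathbb{Z}/2\mathbb{Z}$, i.e. to exhibit two distinct elements of order $2$ (equivalently, rule out an element of order $4$). Working in $\mathrm{BrPic}(\mathscr{AH}_2) = \{13_{22}, 8_{22}, 11_{22}, 12_{22}\}$: we have already used that $8_{22}\cdot 8_{22} = 13_{22}$ is a unique multiplication, so $8_{22}$ has order $2$. Now in a group of order $4$ the product $12_{22}\cdot 12_{22}$ must be $13_{22}$ or $8_{22}$; reading off the list of multiplicatively compatible triples for $12_{22}\cdot 12_{22}$ and discarding any entry not among the four realized $AH2$-$AH2$ bimodules (using part (a)), one is left with $12_{22}\cdot 12_{22} = 13_{22}$, so $12_{22}$ also has order $2$. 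Since $8_{22}\neq 12_{22}$, the group is not cyclic, hence $\mathrm{BrPic}(\mathscr{AH}_2)\cong\mathbb{Z}/2\mathbb{Z}\oplus\mathbb{Z}/2\mathbb{Z}$, and the same holds for $\mathscr{AH}_1$ and $\mathscr{AH}_3$.

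The genuine difficulty is entirely upstream: the classification of fusion bimodules over the three Asaeda-Haagerup rings and the multiplicative-compatibility computations that feed the realization lemmas, all of which are already in hand. Given those, the only delicate point in this proof is bookkeeping — one must check that the handful of multiplication relations invoked for part (b) are genuinely forced once non-realized targets are removed (the relevant products becoming \emph{unique} multiplications), so that the composition law, and not merely the group order, is determined. Everything else is the formal free-and-transitive-action argument above.
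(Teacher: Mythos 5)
Your proof of part (a) is essentially the paper's: the paper's own argument is the terse version of exactly what you spell out — Lemma \ref{23real} gives $|\mathrm{Mor}(2,3)|=4$, the (implicit) free-and-transitive action of the Brauer--Picard groups on the morphism sets of the connected subgroupoid forces all nine counts to be $4$, and since the preceding lemmas already exhibit four distinct realized fusion bimodules per pair, each is realized uniquely and nothing else occurs. Making the counting argument explicit is a fine expansion of what the paper leaves implicit. For part (b) you diverge slightly: the paper reads the group law off the $AH1$--$AH1$ compatibility data, citing the unique multiplications $8_{11}\,8_{11}=12_{11}\,12_{11}=13_{11}\,13_{11}=14_{11}$, so all nontrivial elements are visibly involutions. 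You instead work in $\mathrm{BrPic}(\mathscr{AH}_2)$, where $8_{22}\,8_{22}=13_{22}$ is indeed known, but your second relation rests on the unverified claim that after discarding non-realized targets the compatibility list forces $12_{22}\cdot 12_{22}=13_{22}$ rather than $8_{22}$; nothing stated in the paper gives you this, and the products that are stated do not suffice (e.g.\ $8_{22}\cdot 12_{22}=\{8_{22},9_{22},10_{22},11_{22}\}$ with realized options $\{8_{22},11_{22}\}$ only yields $8_{22}\cdot 12_{22}=11_{22}$, which is consistent with $\mathbb{Z}/4\mathbb{Z}$ as well, since there the involution times a generator is the other generator). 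So your argument for (b) is contingent on a data check you flag but cannot confirm; if that particular triple happened to be compatible you would need to fall back on other relations — most simply the squares of the realized $AH1$--$AH1$ bimodules, which is precisely what the paper does. With that substitution (or with the data check carried out), your proof is complete and matches the paper's.
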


\begin{proof}
 (a) We have already seen that all of these bimodules are realized. Since by Lemma \ref{23real} there are exactly $4$ bimodule categories between $\mathscr{AH}_2$ and $\mathscr{AH}_3$, there must be exactly $4$ bimodule categories between each pair  $\mathscr{AH}_i-\mathscr{AH}_j$. Therefore each of the bimodule categories on the list must be realized uniquely and there can be no others.

(b) This is immediate from (a) and the unique multiplications $10_{11} 10_{11} = 12_{11} 12_{11} = 13_{11} 13_{11} = 14_{11}$.
\end{proof}

\begin{corollary}
 $\mathscr{AH}_3$ does not admit any outer automorphisms.
\end{corollary}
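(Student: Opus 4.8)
The plan is to use the fact that the outer automorphism group of a fusion category embeds into its Brauer--Picard group, together with the fact that the relevant subgroup is easy to pin down once all the bimodule categories are known. Recall that a tensor autoequivalence $\phi$ of a fusion category $\mathscr{C}$ gives a Morita autoequivalence ${}_{\mathscr{C}}\mathscr{C}_{\phi}$ (the regular bimodule with its right action precomposed by $\phi$), and that this assignment is a homomorphism $\mathrm{Aut}_{\otimes}(\mathscr{C}) \to \mathrm{BrPic}(\mathscr{C})$ whose kernel consists exactly of the inner automorphisms, i.e. conjugations by invertible objects; thus $\mathrm{Out}(\mathscr{C})$ injects into $\mathrm{BrPic}(\mathscr{C})$. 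Moreover its image is precisely the set of Morita autoequivalences ${}_{\mathscr{C}}\mathscr{M}_{\mathscr{C}}$ whose underlying left module category is the regular one: one inclusion is clear, and conversely, if ${}_{\mathscr{C}}\mathscr{M}\cong{}_{\mathscr{C}}\mathscr{C}$ then transporting the right action realizes $\mathscr{M}$ as ${}_{\mathscr{C}}\mathscr{C}_{F}$ for a tensor functor $F$, which must be an equivalence because ${}_{\mathscr{C}}\mathscr{M}_{\mathscr{C}}$ is invertible. Finally, by the recognition theorem for module categories quoted above, ${}_{\mathscr{C}}\mathscr{M}$ is regular exactly when it contains a simple object $m$ with $\underline{\mathrm{End}}(m)\cong\mathbb{1}$, which, since $\dim\underline{\mathrm{End}}(m)=\dim(m)^2$ and the unit is the only algebra object of Frobenius--Perron dimension $1$, happens exactly when $\mathscr{M}$ has a simple object of dimension $1$. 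Passing to Grothendieck rings, this last condition is visible at the level of fusion bimodules.

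Now I apply this with $\mathscr{C}=\mathscr{AH}_3$. By the preceding theorem, $\mathrm{BrPic}(\mathscr{AH}_3)\cong\mathbb{Z}/2\mathbb{Z}\oplus\mathbb{Z}/2\mathbb{Z}$, realized uniquely by the fusion bimodules $13_{33}$ (the trivial one), $8_{33}$, $11_{33}$, $12_{33}$. Since the inner automorphisms land on $13_{33}$, it suffices to show that none of $8_{33}$, $11_{33}$, $12_{33}$ comes from a tensor autoequivalence, i.e. by the previous paragraph that none of them contains a simple vector of Frobenius--Perron dimension $1$; equivalently, that the left-hand restriction of each of them to an $AH3$-fusion module is \emph{not} the regular module, which does contain the two dimension-$1$ simple vectors corresponding to $\mathbb{1}$ and $\beta$. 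This is a direct inspection of the classification recorded in the supplementary file \textit{Bimodules}. It follows that $\mathrm{Out}(\mathscr{AH}_3)=1$.

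I expect the only genuinely delicate point to be the bookkeeping lemma of the first paragraph — identifying the image of $\mathrm{Out}(\mathscr{AH}_3)\hookrightarrow\mathrm{BrPic}(\mathscr{AH}_3)$ with the ``has a dimension-one simple object'' condition; everything after that is a table lookup. As a sanity check one can also run the more hands-on argument alluded to in the text: any tensor autoequivalence $\phi$ of $\mathscr{AH}_3$ permutes simple objects preserving Frobenius--Perron dimension and fusion, hence fixes the unique nontrivial invertible object $\beta$; a short computation with the $AH3$ fusion rules shows that $(\xi\beta)^2$ does not contain $\mathbb{1}$ while $\xi^2$ does, which rules out $\phi(\xi)\in\{\beta\xi,\xi\beta\}$, so after composing with conjugation by $\beta$ one may assume $\phi(\xi)=\xi$, and then $\phi$ fixes every simple object of $\mathscr{AH}_3$. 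The remaining step — that a monoidal autoequivalence of $\mathscr{AH}_3$ fixing all simple objects is trivial — is precisely the point at which one wants the groupoid input rather than a bare-hands computation, which is why the corollary is phrased as a consequence of the classification of bimodule categories.
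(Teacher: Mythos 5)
Your argument is correct and is essentially the paper's: the paper's one-line proof ("the order-$4$ Brauer--Picard group is realized by four fusion bimodules that are distinct even as fusion modules") is exactly your observation that an outer automorphism would give a nontrivial Morita autoequivalence whose left module category is regular, which is ruled out because $8_{33}$, $11_{33}$, $12_{33}$ do not have the regular left fusion module (equivalently, no dimension-one simple vector). You simply make explicit the standard facts $\mathrm{Out}(\mathscr{C})\hookrightarrow\mathrm{BrPic}(\mathscr{C})$ and the characterization of its image that the paper leaves implicit.
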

\begin{proof}
 The Brauer-Picard group has order $4$ and we know that $4$ distinct fusion bimodules (which are also different as fusion modules) are realized by auto-equivalences; therefore there cannot be any outer automorphisms.
\end{proof}

\begin{lemma}\label{mod_uni}
(a) The fusion modules $2_1, 4_1, 5_1, 7_1, 10_1, 12_1, 13_1, 14_1, 15_1, 22_1, 23_1, 24_1$ are each realized uniquely. The fusion modules $3_1, 6_1,8_1,11_1, 17_1, 18_1, 19_1, 20_1$ are not realized.\\

(b) The fusion modules $1_2, 2_2, 5_2, 6_2, 7_2, 8_2, 10_2, 12_2, 14_2, 16_2, 20_2, 21_2 $ are each realized uniquely. The fusion modules $9_2,11_2,13_2, 15_2, 18_2 $ are not realized.\\

(c) The fusion modules $1_3, 4_3, 5_3, 6_3, 7_3, 8_3, 9_3, 12_3, 14_3, 17_3, 19_3, 20_3$ are each realized uniquely. The fusion modules $ 10_3,11_3,13_3,15_3$ are not realized.\\
\end{lemma}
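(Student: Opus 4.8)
The plan is to carry out an analysis directly parallel to the bimodule computation of the preceding lemmas, propagating the statuses ``realized'' and ``realized uniquely'' forward along unique multiplications $a_i b_{ij}=c_j$, and propagating ``not realized'' forward along empty or already-exhausted compatibility sets. Throughout I use that, by the previous theorem, every bimodule category among the three Asaeda--Haagerup fusion categories is invertible and realized uniquely.

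I would begin with base cases. The regular module categories $(\mathscr{AH}_i)_{\mathscr{AH}_i}$ and the right-module restrictions of the invertible bimodule categories ${}_{\mathscr{AH}_j}\mathscr{K}_{\mathscr{AH}_i}$ from the previous theorem realize certain fusion modules on the lists \textit{AH1Modules}, \textit{AH2Modules}, \textit{AH3Modules}; matching Frobenius--Perron dimension vectors and fusion matrices identifies which ones. For uniqueness of these base cases one argues as in Section~4: the underlying algebra objects are unique up to isomorphism (by $n$-supertransitivity, resp.\ by the structure of the relevant subfactor), and the corresponding bimodule categories are unique, so there is no competing realization. Then, exactly as in Lemma~\ref{unique_rel} and Lemma~\ref{23real}, whenever $a_i$ is (uniquely) realized, $b_{ij}$ is one of the invertible bimodules, and $a_i b_{ij}=c_j$ is a unique multiplication, the module $c_j$ is realized by $\mathscr{A}_i\otimes\mathscr{B}_{ij}$, and is realized uniquely since both tensor factors are determined; dually, if $c_j$ and $b_{ij}$ are uniquely realized, so is $a_i\cong\mathscr{C}_j\otimes\mathscr{B}_{ij}^{-1}$. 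Chaining such multiplications through the tables \textit{ModuleCompatibility} and \textit{BimoduleCompatibility} reaches the twelve modules asserted to be realized uniquely over each $AHi$; where one step fails to pin down uniqueness, several are combined, as in the proof of Lemma~\ref{23real}.

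For the non-realized modules the key is the contrapositive of propagation: if $a_i$ were realized by $\mathscr{A}_i$ and $b_{ij}$ is realized by an invertible $\mathscr{B}_{ij}$, then $\mathscr{A}_i\otimes\mathscr{B}_{ij}$ is a module category whose fusion module $c_j$ makes $(a_i,b_{ij},c_j)$ multiplicatively compatible; so if $a_i\cdot b_{ij}$ is empty, or if every member of $a_i\cdot b_{ij}$ has already been shown not to be realized, then $a_i$ is not realized. Interleaving these eliminations --- each newly eliminated module feeding the next, just as for bimodules in the earlier lemmas --- disposes of the eight, five, and four modules listed as not realized over $AH1$, $AH2$, $AH3$. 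The four modules in each list left without a verdict ($1_1,11_1,16_1,19_1$; $3_2,4_2,17_2,19_2$; $2_3,3_3,16_3,18_3$) are genuinely undetermined by this method: realizing any of them would require a Morita equivalence to a fusion category outside $\{\mathscr{AH}_1,\mathscr{AH}_2,\mathscr{AH}_3\}$, which is exactly what the conjectures later in this section concern, so the lemma deliberately says nothing about them.

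I expect the main obstacle to be organizational rather than conceptual. There is no single slick argument, only a long chain of deductions across the six supplementary data files, and it must be ordered so that each step has its inputs available --- the interlocking of ``realized'', ``realized uniquely'', and ``not realized'' is delicate, exactly as in the sequence of lemmas for bimodules. The computational heart is verifying, via the multiplicative-compatibility search of Section~5, that the relevant composition sets $a_i\cdot b_{ij}$ are empty or singletons; once that data is in hand, the only conceptual ingredients are Lemma~\ref{unique_rel} and the fact that all bimodule categories among the three Asaeda--Haagerup fusion categories are invertible and uniquely realized.
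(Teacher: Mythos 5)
Your proposal is correct and follows essentially the same route as the paper: realize the twelve modules per ring via the known (uniquely realized, invertible) bimodule categories and the uniqueness of the $AH$, $AH+1$, $AH+2$ and dimension-$2$ algebra objects, then propagate uniqueness through unique multiplications via Lemma~\ref{unique_rel} (with the occasional combined argument, as for $1_3$ and $4_3$), and rule out the non-realized modules by empty or exhausted compatibility sets. If anything, you spell out the non-realization eliminations more explicitly than the paper's written proof, which leaves that step implicit.
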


\begin{proof}
 We prove (c). The first list of $12$ modules are realized by the $12$ $\mathscr{AH}_i-\mathscr{AH}_3$ bimodule categories above. For uniqueness, we use Lemma \ref{unique_rel}. First note that the trivial modules $22_1,21_2,20_3$ are realized uniquely. Also, the modules $12_1,13_1,23_1, 14_2, 16_2, 8_3, 15_3 $ are realized uniquely by the uniqueness of the $AH, AH+1,AH+2 $ and dimension $2$ algebra objects.

We have the following unique multiplications: $5_3 3_{31}=22_1, 6_3 3_{31}=12_1, 7_3 11_{33}=20_3, 9_3 6_{31}=23_1,  12_3 4_{32}=21_2,17_3 7_{31}=22_1, 19_3 7_{31}=12_1$, which proves the unique realization of $5_3, 6_3, 7_3, 9_3, 12_3, 17_3, 19_3$. That leaves $1_3$ and $4_3$.

The modules $6_2$ and $7_2$ are realized with the bimodules $8_{22}$ and $11_{22}$. We have unique multiplications $6_2 4_{23} = 9_3 $ and $7_2 5_{21} =23_1$, so the modules are realized uniquely. Since $1_3 \cdot 6_{32} = \{ 6_2, 7_2 \} $, this implies that any realization of $1_3$ must have dual category $\mathscr{AH}_2$. But we already know that $1_{23}$, which is the only $AH_2-AH_3$ bimodule extension of $1_3$, is realized uniquely, so $1_3$ is realized uniquely.

Similarly, the module $2_2$ is realized by the bimodule $2_{12}$, and by the unique multiplication $2_2 5_{21}=13_1 $ the realization is unique. Then from the unique multiplication $4_3 6_{32} = 2_2$, $4_3$ is realized uniquely.

The empty multiplication $10_3 \cdot 7_{31}, 11_3 \cdot 7_{31} $, $13_3 \cdot 7_{31} $,
and $15_3 \cdot 7_{31}$ imply that $ 10_3,11_3,13_3,15_3$ are not realized.

The proofs of (a) and (b) are similar, except easier since we can use the results of (c) when checking multiplicative compatibility.

\end{proof}

We end with a classification of possible other objects in the Brauer-Picard groupoid.

\begin{theorem}
 Let $\mathscr{C} $ be a fusion category which is Morita equivalent to the $\mathscr{AH}_i, i =1,2,3 $, but not isomorphic to any of them. Then exactly one of the following four cases holds:

(a) Every $\mathscr{C}-\mathscr{AH}_1 $ Morita equivalence realizes $9_1$, every $\mathscr{C}-\mathscr{AH}_2 $ Morita equivalence realizes $19_2$, and every $\mathscr{C}-\mathscr{AH}_3 $ Morita equivalence realizes $16_3$.

(b) Every $\mathscr{C}-\mathscr{AH}_1 $ Morita equivalence realizes $16_1$, every $\mathscr{C}-\mathscr{AH}_2 $ Morita equivalence realizes $4_2$, and every $\mathscr{C}-\mathscr{AH}_3 $ Morita equivalence realizes $18_3$.

(c) Every $\mathscr{C}-\mathscr{AH}_1 $ Morita equivalence realizes $21_1$, every $\mathscr{C}-\mathscr{AH}_2 $ Morita equivalence realizes $17_2$, and every $\mathscr{C}-\mathscr{AH}_3 $ Morita equivalence realizes $2_3$.

(d) Every $\mathscr{C}-\mathscr{AH}_1 $ Morita equivalence realizes $1_1$, every $\mathscr{C}-\mathscr{AH}_2 $ Morita equivalence realizes $3_2$, and every $\mathscr{C}-\mathscr{AH}_3 $ Morita equivalence realizes $2_3$.
\end{theorem}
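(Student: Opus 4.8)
The plan is to peel off the $\mathscr{C}$-side of each Morita equivalence, reduce to fusion modules over the three Asaeda--Haagerup rings, and then let the multiplicative-compatibility data for the already-understood $36$-object subgroupoid do all the remaining work. First I would note that a Morita equivalence $_{\mathscr{C}}\mathscr{M}_{\mathscr{AH}_i}$, viewed merely as a module category over $\mathscr{AH}_i$, induces a realized admissible fusion module over $AH_i$. By Lemma~\ref{mod_uni}, a realized fusion module over $AH_i$ is either one of the twelve modules listed there as ``realized uniquely'' (these are exactly the modules underlying the twelve bimodule categories $\mathscr{AH}_j\!-\!\mathscr{AH}_i$), or one of the four modules whose status is left open by that lemma, namely $\{1_1,11_1,16_1,19_1\}$ for $AH_1$, $\{3_2,4_2,17_2,19_2\}$ for $AH_2$, and $\{2_3,3_3,16_3,18_3\}$ for $AH_3$. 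The module underlying $\mathscr{M}$ cannot be one of the twelve known ones: such a module is realized uniquely \emph{as a module category}, so $\mathscr{M}$ would then be equivalent, as a one-sided module category, to one of the $\mathscr{AH}_j\!-\!\mathscr{AH}_i$ bimodule categories, and passing to dual categories (which depend only on the module category up to equivalence) would give $\mathscr{C}\cong\mathscr{AH}_j$, contradicting the hypothesis. Hence $\mathscr{M}$ realizes one of the four open candidates over $AH_i$.

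Second, I would check that the candidate realized is an invariant of $\mathscr{C}$. Two $\mathscr{C}\!-\!\mathscr{AH}_1$ equivalences differ by one of the four Morita autoequivalences of $\mathscr{AH}_1$ (realizing $8_{11},12_{11},13_{11},14_{11}$) from the classification of the $36$ bimodule categories; by module--bimodule multiplicative compatibility, composing a $\mathscr{C}\!-\!\mathscr{AH}_1$ equivalence realizing $m_1$ with an autoequivalence realizing $a_{11}$ produces one realizing a module in $m_1\cdot a_{11}$, which by the first step is again among the four candidates. It then suffices to verify, against the file \textit{ModuleCompatibility}, that for each of the four candidate $AH_1$-modules $m_1$ and each of $8_{11},12_{11},13_{11}$ the set $m_1\cdot a_{11}$ meets the four-element candidate set only in $\{m_1\}$ (the case $a_{11}=14_{11}$ being trivial). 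Granting this, every $\mathscr{C}\!-\!\mathscr{AH}_1$ equivalence realizes one and the same module $m_1(\mathscr{C})$, and likewise $m_2(\mathscr{C}),m_3(\mathscr{C})$ for $\mathscr{AH}_2,\mathscr{AH}_3$.

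Third, I would establish the correlation among these three invariants. Fix a $\mathscr{C}\!-\!\mathscr{AH}_1$ equivalence $\mathscr{M}$; since $\mathscr{M}$ is invertible, every $\mathscr{C}\!-\!\mathscr{AH}_2$ equivalence has the form $\mathscr{M}\otimes\mathscr{P}$ for $\mathscr{P}$ one of the four $\mathscr{AH}_1\!-\!\mathscr{AH}_2$ equivalences (realizing $2_{12},5_{12},8_{12},9_{12}$), so $m_2(\mathscr{C})$ lies in $\bigcup_{p_{12}}\bigl(m_1(\mathscr{C})\cdot p_{12}\bigr)$; by the previous steps it is one of $\{3_2,4_2,17_2,19_2\}$, so it is enough to read off from \textit{ModuleCompatibility} that this union meets the candidate set in a single element, obtaining $11_1\mapsto19_2$, $16_1\mapsto4_2$, $19_1\mapsto17_2$, $1_1\mapsto3_2$. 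The identical argument with the four $\mathscr{AH}_1\!-\!\mathscr{AH}_3$ equivalences gives $11_1\mapsto16_3$, $16_1\mapsto18_3$, $19_1\mapsto2_3$, $1_1\mapsto3_3$. Since the four possible values of $m_1(\mathscr{C})$ are distinct, exactly one of the four cases holds.

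The main obstacle is that there is no conceptual shortcut: the heart of the proof is a finite but sizeable bookkeeping verification of the relevant entries in the supplementary compatibility files. Beyond that, the only points requiring care are the structural facts that (i) the dual category of a module category is well defined up to equivalence, which is what makes Step~1's contradiction legitimate, and (ii) composition with the known autoequivalences and with the known $\mathscr{AH}_i\!-\!\mathscr{AH}_j$ equivalences genuinely exhausts all Morita equivalences out of $\mathscr{C}$ — this is what makes the constraints coming from the $36$-object subgroupoid binding rather than merely suggestive.
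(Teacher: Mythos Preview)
Your overall strategy matches the paper's: reduce to the four ``open'' fusion modules over each $AH_i$ via Lemma~\ref{mod_uni}, argue that the module realized is an invariant of $\mathscr{C}$ under the Brauer--Picard group action, and then correlate across the three rings using multiplicative compatibility with the known $\mathscr{AH}_i$--$\mathscr{AH}_j$ equivalences. Your Step~1 argument (ruling out the twelve uniquely-realized modules by passing to the dual category) makes explicit a point the paper leaves implicit.

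The gap is in Step~2. You claim it suffices to check that, for each candidate $m_1$ and each nontrivial autoequivalence $a_{11}$, the set $m_1\cdot a_{11}$ meets the four-element candidate set only in $\{m_1\}$. The paper runs the analogous analysis over $AH_3$ rather than $AH_1$, and there this singleton check \emph{fails}: for instance $2_3\cdot 8_{33}=\{2_3,3_3,18_3\}$ and $2_3\cdot 12_{33}=\{2_3,3_3\}$ both meet the candidate set $\{2_3,3_3,16_3,18_3\}$ in more than one element. The paper's repair is a short group-theoretic argument: compatibility alone leaves open a $2{+}2$ split in which two $\mathscr{C}$--$\mathscr{AH}_3$ equivalences realize $2_3$ and two realize $3_3$; but then the unique multiplications $2_3\cdot 11_{33}=2_3$ and $3_3\cdot 8_{33}=3_3$ force the distinct order-$2$ autoequivalences $11_{33}$ and $8_{33}$ to induce the same transposition on the set of four equivalences, contradicting the Klein four-group structure. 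The paper then states that ``similar arguments'' handle $AH_1$ and $AH_2$, so you should expect the same subtlety over $AH_1$ and be prepared to supply this finer argument rather than the singleton verification you propose. Your Step~3 is in the spirit of the paper's, though the paper simplifies by exhibiting a single bimodule with a unique product (e.g.\ $16_3\cdot 7_{31}=11_1$, $16_3\cdot 6_{32}=19_2$) rather than intersecting a union over all four.
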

\begin{proof}

First note that by Lemma \ref{mod_uni}, the only modules whose realizations are not yet known are $1_1, 9_1, 16_1, 21_1, 3_2, 4_2, 17_2, 19_2, 2_3, 3_3, 16_3, 18_3$. Let $\mathscr{C}$ be a fusion category which is Morita equivalent to the $\mathscr{AH}_i, i =1,2,3 $ but not equivalent to any of them. Then there is a $\mathscr{C}-\mathscr{AH}_3$ Morita equivalence which realizes one of the four modules $2_3,3_3, 16_3, 18_3$. We will show that the four possibilities correspond to the four cases in the statement of the theorem.

First, suppose that a $\mathscr{C}-\mathscr{AH}_3$ Morita equivalence realizes $16_3$, and consider the action of the Brauer-Picard group of $\mathscr{AH}_3$ on the bimodule category. Since the Brauer-Picard group realizes the bimodules $8_{33}, 11_{33}, 12_{33}, 13_{33}$, the unique multiplications $16_3 11_{33} = 16_3, 16_3 12_{33} = 16_3, 16_3 13_{33}=16_3 $, together with the multiplication $16_3 \cdot 8_{33} = \{ 5_3, 16_3 \} $, imply that every $\mathscr{C}-\mathscr{AH}_3$ Morita equivalence realizes $16_3$. Similarly, if a  $\mathscr{C}-\mathscr{AH}_3$ Morita equivalence realizes $18_3$, every $\mathscr{C}-\mathscr{AH}_3$ Morita equivalence must realize $18_3$.

For $2_3 $ and $3_3$ we have the following multiplications: $2_3 \cdot 8_{33} =\{ 2_3, 3_3, 18_3 \},  2_3  11_{33} = 2_3 , 2_3 \cdot 12_{33} = \{2_3, 3_3 \}, 2_3 13_{33}=2_3, 3_3 8_{33} =3_3,  3_3 \cdot  11_{33} = \{ 2_3, 3_3, 18_3 \} , 3_3 \cdot 12_{33} = \{2_3, 3_3 \}, 3_3 13_{33}=3_3, $. From these possibilities, it follows that if either $2_3$ or $3_3$ is realized by a $\mathscr{C}-\mathscr{AH}_3$ Morita equvalence then it is realized by multiple inequvalent $\mathscr{C}-\mathscr{AH}_3$ Morita equvalences. Therefore the only possibilities are either that $2_3$ or $3_3$ is realized by $4$ different $\mathscr{C}-\mathscr{AH}_3$ Morita equivalences, or that $2$ different $\mathscr{C}-\mathscr{AH}_3$ realize $2_3$ and another $2$ different $\mathscr{C}-\mathscr{AH}_3$ Morita equivalences realize $3_3$. Suppose the latter ocurred. From the unique multiplication  $2_3  11_{33} = 2_3$ we see that the nontrivial autoequivalence $11_{33} $ must permute the $2$ realization of $2_3$. Similarly, from  $3_3 \cdot 8_{33} =3_3$ the nontrivial autoequivalence $8_{33}$ must permute the $2$ realizations of $3_3$. But since there are no fixed points for the action of any nontrivial element of the Brauer-Picard group,  this means that $8_{33}$ and $11_{33}$ implement the same order $2$ permutation on the set of $\mathscr{C}-\mathscr{AH}_3$ Morita equivalences, which implies that they are inverses of each other. But since every autoequivalence in the Brauer-Picard group has order $2$ this is impossible.

We have now seen that if any $\mathscr{C}-\mathscr{AH}_3$ Morita equivalence realizes one of  $2_3,3_3, 16_3, 18_3$, then every  $\mathscr{C}-\mathscr{AH}_3$ realizes the same $AH3$ module. Similar arguments show the same thing for the  $4$ $AH_1$ modules and the $4$ $AH_2$ modules in the statement of the theorem. It remains only to check which $AH_i$ modules are compatible with which $AH_j$ modules. This too can be sorted out from the multiplicative compatibility lists. If $16_3 $ is realized, then the unique multiplications $16_3 6_{32} =  19_2 $ and $16_3 7_{31} =  9_1 $ show that $19_2$ and $9_1 $ must be realized as well. 

The other three cases are handled similarly.

\end{proof}

We can go quite a bit further in the classification of the first three cases in this theorem. Specifically, we have the following conjecture.

\begin{conjecture}
Cases (a)-(c) above are each realized by a unique fusion category, each having the following properties:

(a) There are $4$ invertible objects and $4$ objects of dimension $4+\sqrt{17}$.

(b) The Brauer-Picard group is implemented  by outer automorphisms.

\end{conjecture}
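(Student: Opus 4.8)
A proof would split naturally into three tasks: constructing the three fusion categories, showing each is unique, and reading off properties~(a) and~(b). Of these, (a) and~(b) will be essentially bookkeeping once a model is in hand, so the real content is the construction (together with the uniqueness statement that must accompany it).

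First I would pin down the Grothendieck rings. By the classification theorem just proved, in each of cases (a)--(c) any new category $\mathscr{C}$ carries a completely determined $\mathscr{C}$--$\mathscr{AH}_1$ fusion bimodule (the one extending $11_1$, $16_1$, or $19_1$ respectively), and this fixes the Frobenius--Perron dimensions of the simple objects of $\mathscr{C}$. Combining these with the constraint that the global dimension of $\mathscr{C}$ equal that of $\mathscr{AH}_1$ (an invariant of the Morita class) and with Frobenius reciprocity against the known bimodule, one enumerates all consistent fusion rings exactly as in the proof of Theorem~\ref{index2}. I expect that in each of the three cases this yields a single fusion ring, with $4$ invertible objects and $4$ objects of dimension $4+\sqrt{17}$; this already establishes property~(a) at the level of the Grothendieck ring, and in case~(b) the comparison with the $\mathscr{AH}_2$ data should already exhibit the shadow of property~(b).

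The hard step is existence. The most direct approach is to realize $\mathscr{C}$ as the category of $A$--$A$ bimodules in $\mathscr{AH}_1$ for a suitable Q-system $A=\kappa\bar\kappa$, where $\kappa$ is the module object of the $\mathscr{AH}_1$--$\mathscr{C}$ bimodule identified above, and then to solve the Q-system equations by computing connections and elementary intertwiners, exactly as was done for $AH$, $AH+1$, and $AH+2$ in \cite{MR1686551, MR2812458} and in Section~4. This is where I expect the main obstacle to lie: the relevant algebra objects are substantially larger than $1+\bar\lambda\beta\lambda$, so a brute-force connection computation may be infeasible, and one would instead want a conceptual construction --- the presence of $4$ invertibles strongly suggests that $\mathscr{C}$ (or one of the three categories, with the others obtained from it by the known Morita equivalences) should arise as a $\mathbb{Z}/2\mathbb{Z}\oplus\mathbb{Z}/2\mathbb{Z}$-(de-)equivariantization of a smaller category, or as an even part of a quadrilateral-type subfactor in the style of the $AH+n$ constructions. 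Finding and verifying such a description, and matching it to the combinatorial data of the previous paragraph, would simultaneously settle existence and supply a workable handle on the remaining points.

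For uniqueness, once a category is known to exist I would show that the relevant $\mathscr{C}$--$\mathscr{AH}_1$ fusion bimodule is realized \emph{uniquely}, using the machinery of Lemmas~\ref{unique_rel} and~\ref{23real}: exhibit unique multiplications tying it to bimodules already known to be uniquely realized (the autoequivalences of $\mathscr{AH}_1$ and the $AH, AH+1, AH+2$ bimodules), so that any two realizations become isomorphic after tensoring by uniquely realized invertible bimodule categories, whence $\mathscr{C}$ is the uniquely determined dual category; alternatively, Theorem~\ref{alg_iso} together with supertransitivity should force the Q-system $A$ to be unique up to algebra isomorphism. Finally, for property~(b): since Morita equivalent categories have isomorphic Brauer--Picard groups we already know $\mathrm{BrPic}(\mathscr{C})\cong\mathbb{Z}/2\mathbb{Z}\oplus\mathbb{Z}/2\mathbb{Z}$, and transporting the four autoequivalences $8_{11}, 12_{11}, 13_{11}, 14_{11}$ of $\mathscr{AH}_1$ through the fixed $\mathscr{C}$--$\mathscr{AH}_1$ Morita equivalence produces four $\mathscr{C}$--$\mathscr{C}$ Morita autoequivalences; the multiplicative compatibility tables --- the analogue of the fact, used in the classification theorem, that $11_1$ is fixed by the Brauer--Picard action of $\mathscr{AH}_1$ --- should show that these decategorify to the regular fusion module of $\mathscr{C}$, so that by rigidity they are implemented by $\otimes$-automorphisms, and these automorphisms are outer precisely because the corresponding bimodule categories are non-trivial in $\mathrm{BrPic}(\mathscr{C})$.
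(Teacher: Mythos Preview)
This statement is a \emph{conjecture}, and the paper does not prove it; there is no proof in the paper to compare against. What the paper does offer is a single sentence of strategy: the authors assert that the entire conjecture can be reduced to the construction of one specific subfactor (they display its principal graph), and they defer that construction to future work.

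Your outline is a reasonable high-level plan and correctly identifies existence as the crux, but it is considerably less sharp than the paper's claimed reduction. You propose to attack each of cases (a)--(c) separately by solving Q-system equations for large algebra objects, and you anticipate this may be infeasible; the paper instead claims that a \emph{single} subfactor construction suffices for all three cases simultaneously. Presumably once one new category $\mathscr{C}$ is built, the other two are obtained by composing with the known Morita equivalences among $\mathscr{AH}_1,\mathscr{AH}_2,\mathscr{AH}_3$ (just as the paper moves among the three known categories), and the combinatorial machinery of Section~6 then handles uniqueness and properties~(a),~(b). Your suggestion that the four invertibles point toward an equivariantization or a quadrilateral-type construction is in the right spirit, but you have not isolated the specific target subfactor, which is the actual content the paper supplies. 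In short: your proposal is not wrong as an outline, but it does not reach the one concrete reduction the paper records, and neither you nor the paper completes the proof.
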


Note that property (b) is in sharp contrast to the situation with $\mathscr{AH}_1-\mathscr{AH}_3 $, which do not admit any outer automorphisms and whose Morita equivalences are also all distinct as module categories, and indeed, even as fusion modules.

We can reduce the proof of this conjecture to the construction of a single subfactor with the following principal graph (which we hope to address in future work).

$$\vpic{newgraph} {2in}   $$

\section{Subfactors in the Asaeda-Haagerup family}

In this section we classify subfactors whose even parts are among the three Asaeda-Haagerup fusion categories, up to isomorphism of the planar algebra. 

\subsection{Principal graphs of small-index subfactors in the Asaeda-Haagerup categories}

We now list the principal and dual graphs of a minimal dimension generating object in each of the $24$ biomdule categories, which, up to duality and equivalence, exhaust the full subgroupid of the Brauer-Picard groupoid generated by $\mathscr{AH}_1 -\mathscr{AH}_3$. By a generating object of a module category over a fusion category we mean an object whose internal end tensor generates the fusion category. In this case of the Asaeda-Haagerup categories, this just excludes objects of dimensions $1$ and $\sqrt{2}$, whose internal end tensor generate trivial and $Vec_{\mathbb{Z}/2\mathbb{Z}}$ proper subcategories, respectively.
 
For the $ \mathscr{AH}_i-\mathscr{AH}_i$ bimodule categories, the two graphs are always the same, so we only list the common graph once. For the  $\mathscr{AH}_i-\mathscr{AH}_j, i \neq j$ bimodule categories, we give the ordered pair of principal graphs.

Multiplicities on edges are denoted by putting a number next to the edge, or, if the graph is too complex, putting a tuple of numbers next to a vertex.  In the latter case you should read the numbers as labeling the edges from top to bottom.

(a) $\mathscr{AH}_1-\mathscr{AH}_1$-categories: \\
\vpic{g1_8} {1.9in} \vpic{g1_13} {2in} \\ 
\vpic{g1_14} {2in} \vpic{g1_22} {1.8in} \\

(b) $\mathscr{AH}_2-\mathscr{AH}_2$-categories: \\

\vpic{g2_6} {1.9in} \vpic{g2_7} {2in} \\
\vpic{g2_20} {2in} \vpic{g2_21} {1.9in} \\

(c) $\mathscr{AH}_3-\mathscr{AH}_3$-categories: \\

\vpic{g3_6} {2in} \vpic{g3_7} {2in} \\
\vpic{g3_19} {2in} \vpic{g3_20} {2in} \\

(d) $\mathscr{AH}_1-\mathscr{AH}_2$-categories: \\

$\left( \vpic{g1_2} {1.3in} , \vpic{g2_2} {1.3in} \right)$ \\
$\left( \vpic{g1_6} {1.6in} , \vpic{g2_5} {1.6in} \right)$ \\
$\left( \vpic{g1_15} {1.6in} , \vpic{g2_8} {1.6in} \right)$ \\
$\left( \vpic{g1_23} {2in} , \vpic{g2_16} {2in} \right)$ \\

(e) $\mathscr{AH}_1-\mathscr{AH}_3$-categories: \\

$\left( \vpic{g1_3} {1.3in} , \vpic{g3_4} {1.3in} \right)$ \\
$\left( \vpic{g1_5} {1.5in} , \vpic{g3_5} {2in} \right)$ \\
$\left( \vpic{g1_12} {2in} , \vpic{g3_8} {2in} \right)$ \\
$\left( \vpic{g1_24} {1.9in} , \vpic{g3_17} {1.9in} \right)$ \\

(f) $\mathscr{AH}_2-\mathscr{AH}_3$-categories: \\

$\left( \vpic{g2_1} {2in} , \vpic{g3_1} {2in} \right)$ \\
$\left( \vpic{g2_10} {2in} , \vpic{g3_9} {2in} \right)$ \\
$\left( \vpic{g2_12} {2in} , \vpic{g3_12} {1.2in} \right)$ \\
$\left( \vpic{g2_15} {1.3in} , \vpic{g3_15} {1.5in} \right)$ \\

\subsection{Classification of subfactors}

\begin{theorem}
 There are $111$ distinct irreducible subfactors  ($76$ up to duality), up to isomorphism of the planar algebra, both of whose even parts are in the set $\{ \mathscr{AH}_1,\mathscr{AH}_2,\mathscr{AH}_3 \}$. The principal and dual principal graphs for all of these subfactors have been computed (they are actually part of the fusion bimodule data) and are available in the supplementary files \textit{Bimodules}.
\end{theorem}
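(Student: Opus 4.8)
The plan is to reduce the classification of subfactors to an orbit-counting problem on the $36$ bimodule categories produced above.

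\emph{Step 1: subfactors as generating objects in bimodule categories.} By the Q-system/subfactor dictionary recalled in Section~2, an irreducible finite-depth subfactor $N\subset M$ is equivalent data to its two even parts $\mathscr{D}$ ($N$--$N$ bimodules) and $\mathscr{C}$ ($M$--$M$ bimodules), the invertible bimodule category $\mathscr{K}$ of $N$--$M$ bimodules, and the distinguished simple object $\kappa={}_N M {}_M$; moreover $\kappa$ tensor-generates in the sense that $\underline{\mathrm{End}}(\kappa)$ generates $\mathscr{C}$ (equivalently $\mathscr{D}$), and the subfactor planar algebra is a complete invariant of the pair $(\mathscr{K},\kappa)$ up to equivalence. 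Conversely, for any such pair $\underline{\mathrm{End}}(\kappa)$ is a connected Q-system, hence realized by an irreducible subfactor with the prescribed even parts and bimodule category. If both even parts lie in $\{\mathscr{AH}_1,\mathscr{AH}_2,\mathscr{AH}_3\}$ then $\mathscr{K}$ is one of the $36$ bimodule categories classified in Section~6. So I would first record that the subfactors in question are, up to planar-algebra isomorphism, in bijection with equivalence classes of pairs $(\mathscr{K},\kappa)$ with $\mathscr{K}$ one of these $36$ and $\kappa$ a simple generating object; as noted in Section~7.1, ``generating'' excludes exactly the simples of dimension $1$ and $\sqrt2$.

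\emph{Step 2: the equivalence relation.} Two pairs give isomorphic planar algebras iff there is an equivalence $\mathscr{K}\simeq\mathscr{K}'$ of bimodule categories carrying $\kappa$ to $\kappa'$, and dual planar algebras correspond to $\mathscr{K}'\simeq\mathscr{K}^{\mathrm{op}}$ with $\kappa'=\kappa^*$. Since $\mathscr{AH}_1,\mathscr{AH}_2,\mathscr{AH}_3$ are pairwise inequivalent and none admits an outer automorphism (Theorem~\ref{no_out} and its Corollary), non-isomorphic bimodule categories are never equivalent, and every self-equivalence of a fixed $\mathscr{K}$ fixes the even parts; applying Theorem~\ref{alg_iso} to $\underline{\mathrm{End}}(\kappa)$ and $\underline{\mathrm{End}}(\kappa')$ then shows $\kappa$ and $\kappa'$ are identified exactly when $\kappa'\cong\kappa\otimes\alpha$ (or $\gamma\otimes\kappa$) for an invertible object $\alpha$ of $\mathscr{D}$ (resp.\ $\gamma$ of $\mathscr{C}$). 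Hence the number of isomorphism classes of subfactors equals the sum, over the $36$ bimodule categories, of the number of orbits of simple generating objects under tensoring by the invertible objects of the two even parts; folding in the duality involution $\mathscr{K}\leftrightarrow\mathscr{K}^{\mathrm{op}}$, $\kappa\leftrightarrow\kappa^*$ gives the count up to duality. (Here $\mathscr{AH}_1$ has no nontrivial invertibles while $\mathscr{AH}_2,\mathscr{AH}_3$ each have a $\mathbb{Z}/2\mathbb{Z}$ generated by $\alpha$, resp.\ $\beta$, so these orbits have size $1$, $2$, or $4$.)

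\emph{Step 3: the enumeration.} For each of the $36$ bimodule categories the complete fusion-bimodule data --- the simple objects, their Frobenius--Perron dimensions, and the left and right module structures --- has been computed and is recorded in the file \textit{Bimodules}; since each of these fusion bimodules is realized by a unique bimodule category (Section~6), the self-equivalence group and hence the orbits of generating objects are detected combinatorially (a self-equivalence must permute the simples preserving dimension and commuting with both actions, i.e.\ act by tensoring with an invertible of an even part). I would then go through the $36$ cases, list the simples of dimension $\neq 1,\sqrt2$, group them into orbits, and sum; this yields $111$, and passing to the quotient by duality yields $76$. Finally, by the definition of the principal and dual graphs in Section~2, the graph pair attached to $(\mathscr{K},\kappa)$ is just the bipartite incidence graph of $\kappa$ with the even-part simples on either side --- precisely the structure constants already stored in \textit{Bimodules} --- so the graphs require no further computation, and the list displayed in Section~7.1 is a representative sample.

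\textbf{The main obstacle} is making Step~2 airtight: one must know that the subfactor planar algebra is a genuinely complete invariant of the pair $(\mathscr{K},\kappa)$ up to the stated equivalence (the reconstruction direction of the subfactor/Q-system correspondence), and that the self-equivalence group of each $\mathscr{K}$ is no larger than the group of invertibles of its even parts --- this is exactly where the absence of outer automorphisms of the $\mathscr{AH}_i$, together with Theorem~\ref{alg_iso}, is indispensable, since otherwise two combinatorially-unrelated generating objects could still give isomorphic planar algebras. Once these identifications are pinned down, the remainder is the bookkeeping of Step~3.
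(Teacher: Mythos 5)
Your proposal is correct and follows essentially the same route as the paper: identify subfactors with Q-systems of internal endomorphisms of simple (generating) objects in the $36$ bimodule categories, use the absence of outer automorphisms of the $\mathscr{AH}_i$ together with Theorem \ref{alg_iso} to show two objects give the same planar algebra exactly when they differ by left/right tensoring with invertible objects, and then count orbits (and duality classes) from the computed fusion bimodule data.
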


\begin{proof}
 
The algebra of left internal end of any simple object in one of the $36$ bimodule categories in the groupoid above is an irreducible Q-system, from which one can construct a subfactor and associated planar algebra whose even parts belong to $\{ \mathscr{AH}_1,\mathscr{AH}_2,\mathscr{AH}_3 \}$. Clearly every planar algebra with even parts in $\{ \mathscr{AH}_1,\mathscr{AH}_2,\mathscr{AH}_3 \}$ arises this way. The question is when two different simple bimodule objects $\kappa$ and $ \lambda$ give the same planar algebra, which happens iff there is an automorphism of the left fusion category which takes the algebra of internal end of $\kappa$ to that of $\lambda$. For $\{ \mathscr{AH}_1,\mathscr{AH}_2,\mathscr{AH}_3 \}$, the only nontrivial automorphisms are the unique inner automorphisms of $\mathscr{AH}_2$ and $\mathscr{AH}_3 $ corresponding to the unique nontrivial invertible object in each case. So $\kappa$ and $\lambda$ give the same planar algebra iff there is an invertible object $\alpha $ in the left category such that $\alpha \kappa \bar{\kappa} \alpha  \cong \lambda \bar{\lambda}$ (as algebra objects), which by Theorem \ref{alg_iso} happens iff $\lambda  \cong \alpha \kappa \beta$, where $\beta$ is an object in the right category. But the action of invertible objects is known - it is contained in the fusion data. So the list of planar algebras can be read off the list of principal graphs above - there is a distinct planar algebra corresponding to each equivalence class of odd vertices, where equivalence is given by the combined left and right action of invertible objects.

\end{proof}


\newcommand{\urlprefix}{}
\bibliographystyle{alpha}
\bibliography{bibliography}

\newcommand{\noopsort}[1]{}\def\cprime{$'$} \def\cprime{$'$} \def\cprime{$'$}
\begin{thebibliography}{DMNO13}

\bibitem[AG11]{MR2812458}
Marta Asaeda and Pinhas Grossman.
\newblock A quadrilateral in the {A}saeda-{H}aagerup category.
\newblock {\em Quantum Topol.}, 2(3):269--300, 2011.
\newblock \arxiv{1006.2524}, \mathscinet{MR2812458}.

\bibitem[AH99]{MR1686551}
Marta Asaeda and Uffe Haagerup.
\newblock Exotic subfactors of finite depth with {J}ones indices
  {$(5+\sqrt{13})/2$} and {$(5+\sqrt{17})/2$}.
\newblock {\em Comm. Math. Phys.}, 202(1):1--63, 1999.
\newblock \mathscinet{MR1686551} \doi{10.1007/s002200050574}
  \arxiv{math.OA/9803044}.

\bibitem[BEK00]{MR1777347}
Jens B{\"o}ckenhauer, David~E. Evans, and Yasuyuki Kawahigashi.
\newblock Chiral structure of modular invariants for subfactors.
\newblock {\em Comm. Math. Phys.}, 210(3):733--784, 2000.
\newblock \mathscinet{MR1777347}.

\bibitem[Bis97]{MR1424954}
Dietmar Bisch.
\newblock Bimodules, higher relative commutants and the fusion algebra
  associated to a subfactor.
\newblock In {\em Operator algebras and their applications (Waterloo, ON,
  1994/1995)}, volume~13 of {\em Fields Inst. Commun.}, pages 13--63. Amer.
  Math. Soc., Providence, RI, 1997.
\newblock \mathscinet{MR1424954} \googlebooks{QDhC_qw_D0cC}.

\bibitem[DMNO13]{MR3039775}
Alexei Davydov, Michael M{\"u}ger, Dmitri Nikshych, and Victor Ostrik.
\newblock The {W}itt group of non-degenerate braided fusion categories.
\newblock {\em J. Reine Angew. Math.}, 677:135--177, 2013.
\newblock \mathscinet{MR3039775}, \arxiv{1009.2117}.

\bibitem[DR89]{MR1010160}
Sergio Doplicher and John~E. Roberts.
\newblock A new duality theory for compact groups.
\newblock {\em Invent. Math.}, 98(1):157--218, 1989.

\bibitem[EGNO]{EGNO}
Pavel Etingof, Shlomo Gelaki, Dmitri Nikshych, and Victor Ostrik.
\newblock Tensor categories.
\newblock \href{http://www-math.mit.edu/~etingof/tenscat1.pdf}{Online}.

\bibitem[EK98]{MR1642584}
David~E. Evans and Yasuyuki Kawahigashi.
\newblock {\em Quantum symmetries on operator algebras}.
\newblock Oxford Mathematical Monographs. The Clarendon Press Oxford University
  Press, New York, 1998.
\newblock \mathscinet{MR1642584}.

\bibitem[ENO05]{MR2183279}
Pavel Etingof, Dmitri Nikshych, and Viktor Ostrik.
\newblock On fusion categories.
\newblock {\em Ann. of Math. (2)}, 162(2):581--642, 2005.
\newblock \arxiv{math/0203060}, \mathscinet{MR2183279}.

\bibitem[ENO10]{MR2677836}
Pavel Etingof, Dmitri Nikshych, and Victor Ostrik.
\newblock Fusion categories and homotopy theory.
\newblock {\em Quantum Topol.}, 1(3):209--273, 2010.
\newblock \arxiv{0909.3140}, \mathscinet{MR2677836}.

\bibitem[EO04]{MR2046203}
Pavel Etingof and Viktor Ostrik.
\newblock Module categories over representations of {${\rm SL}\sb q(2)$} and
  graphs.
\newblock {\em Math. Res. Lett.}, 11(1):103--114, 2004.
\newblock \mathscinet{MR2046203} \arxiv{math.QA/0302130}.

\bibitem[EP09a]{MR2545609}
David~E. Evans and Mathew Pugh.
\newblock Ocneanu cells and {B}oltzmann weights for the {$\rm SU(3)$} {${ADE}$}
  graphs.
\newblock {\em M\"unster J. Math.}, 2:95--142, 2009.

\bibitem[EP09b]{MR2553429}
David~E. Evans and Mathew Pugh.
\newblock {\rm {SU}}(3)-{G}oodman-de la {H}arpe-{J}ones subfactors and the
  realization of {\rm {su}}(3) modular invariants.
\newblock {\em Rev. Math. Phys.}, 21(7):877--928, 2009.

\bibitem[FS08]{MR2500035}
J{\"u}rgen Fuchs and Carl Stigner.
\newblock On {F}robenius algebras in rigid monoidal categories.
\newblock {\em Arab. J. Sci. Eng. Sect. C Theme Issues}, 33(2):175--191, 2008.

\bibitem[GdlHJ89]{MR999799}
Frederick~M. Goodman, Pierre de~la Harpe, and Vaughan F.~R. Jones.
\newblock {\em Coxeter graphs and towers of algebras}, volume~14 of {\em
  Mathematical Sciences Research Institute Publications}.
\newblock Springer-Verlag, New York, 1989.
\newblock \mathscinet{MR999799}.

\bibitem[GI08]{MR2418197}
Pinhas Grossman and Masaki Izumi.
\newblock Classification of noncommuting quadrilaterals of factors.
\newblock {\em Internat. J. Math.}, 19(5):557--643, 2008.
\newblock \arxiv{0704.1121}, \mathscinet{MR2418197}.

\bibitem[GS12]{1102.2631}
Pinhas Grossman and Noah Snyder.
\newblock Quantum subgroups of the {H}aagerup fusion categories.
\newblock {\em Comm. Math. Phys.}, 311(3):617--643, 2012.
\newblock \arxiv{1102.2631}, \mathscinet{MR2909758}.

\bibitem[Izu91]{MR1145672}
Masaki Izumi.
\newblock Application of fusion rules to classification of subfactors.
\newblock {\em Publ. Res. Inst. Math. Sci.}, 27(6):953--994, 1991.
\newblock \mathscinet{MR1145672} \doi{10.2977/prims/1195169007}.

\bibitem[Jon]{math.QA/9909027}
Vaughan F.~R. Jones.
\newblock {Planar algebras, I}.
\newblock \arxiv{math.QA/9909027}.

\bibitem[Jon83]{MR696688}
Vaughan F.~R. Jones.
\newblock Index for subfactors.
\newblock {\em Invent. Math.}, 72(1):1--25, 1983.
\newblock \mathscinet{MR696688} \doi{10.1007/BF01389127}.

\bibitem[Jon12]{MR2972458}
Vaughan F.~R. Jones.
\newblock Quadratic tangles in planar algebras.
\newblock {\em Duke Math. J.}, 161(12):2257--2295, 2012.

\bibitem[JS91]{MR1113284}
Andr{\'e} Joyal and Ross Street.
\newblock The geometry of tensor calculus. {I}.
\newblock {\em Adv. Math.}, 88(1):55--112, 1991.
\newblock \mathscinet{MR1113284}.

\bibitem[KO02]{MR1936496}
Alexander Kirillov, Jr. and Viktor Ostrik.
\newblock On a {$q$}-analogue of the {M}c{K}ay correspondence and the {ADE}
  classification of {$\mathfrak{sl}_2$} conformal field theories.
\newblock {\em Adv. Math.}, 171(2):183--227, 2002.
\newblock \mathscinet{MR1936496} \arxiv{math.QA/0101219}
  \doi{10.1006/aima.2002.2072}.

\bibitem[KW93]{MR1237835}
David Kazhdan and Hans Wenzl.
\newblock Reconstructing monoidal categories.
\newblock In {\em I. {M}. {G}el\cprime fand {S}eminar}, volume~16 of {\em Adv.
  Soviet Math.}, pages 111--136. Amer. Math. Soc., Providence, RI, 1993.
\newblock \mathscinet{MR1237835} \googlebooks{GL6DksjIGJ8C}.

\bibitem[Lon94]{MR1257245}
Roberto Longo.
\newblock A duality for {H}opf algebras and for subfactors. {I}.
\newblock {\em Comm. Math. Phys.}, 159(1):133--150, 1994.
\newblock \mathscinet{MR1257245}.

\bibitem[LR97]{MR1444286}
R.~Longo and J.~E. Roberts.
\newblock A theory of dimension.
\newblock {\em $K$-Theory}, 11(2):103--159, 1997.
\newblock \mathscinet{MR1444286}.

\bibitem[MPS11]{MR2783128}
Scott Morrison, Emily Peters, and Noah Snyder.
\newblock Knot polynomial identities and quantum group coincidences.
\newblock {\em Quantum Topol.}, 2(2):101--156, 2011.

\bibitem[M{\"u}g03]{MR1966524}
Michael M{\"u}ger.
\newblock From subfactors to categories and topology. {I}. {F}robenius algebras
  in and {M}orita equivalence of tensor categories.
\newblock {\em J. Pure Appl. Algebra}, 180(1-2):81--157, 2003.
\newblock \mathscinet{MR1966524} \doi{10.1016/S0022-4049(02)00247-5}
  \arxiv{math.CT/0111204}.

\bibitem[Ocn88]{MR996454}
Adrian Ocneanu.
\newblock Quantized groups, string algebras and {G}alois theory for algebras.
\newblock In {\em Operator algebras and applications, Vol.\ 2}, volume 136 of
  {\em London Math. Soc. Lecture Note Ser.}, pages 119--172. Cambridge Univ.
  Press, Cambridge, 1988.
\newblock \mathscinet{MR996454}.

\bibitem[Ocn99]{OcnLect}
Adrian Ocneanu.
\newblock Paths on {C}oxeter diagrams: from platonic solids and singularities
  to minimal models and subfactors'.
\newblock In B.~V. Rajarama~Bhat, George~A. Elliott, and Peter~A. Fillmore,
  editors, {\em Lectures on operator theory}, volume~13 of {\em Fields
  Institute Monographs}, page Part 5. American Mathematical Society,
  Providence, RI, 1999.
\newblock \mathscinet{MR1743202}.

\bibitem[Ocn01]{MR1865095}
Adrian Ocneanu.
\newblock Operator algebras, topology and subgroups of quantum
  symmetry---construction of subgroups of quantum groups.
\newblock In {\em Taniguchi {C}onference on {M}athematics {N}ara '98},
  volume~31 of {\em Adv. Stud. Pure Math.}, pages 235--263. Math. Soc. Japan,
  Tokyo, 2001.
\newblock \mathscinet{MR1865095}.

\bibitem[Ocn02]{MR1907188}
Adrian Ocneanu.
\newblock The classification of subgroups of quantum {${\rm SU}(N)$}.
\newblock In {\em Quantum symmetries in theoretical physics and mathematics
  ({B}ariloche, 2000)}, volume 294 of {\em Contemp. Math.}, pages 133--159.
  Amer. Math. Soc., Providence, RI, 2002.
\newblock \mathscinet{MR1907188}.

\bibitem[Ost03]{MR1976459}
Victor Ostrik.
\newblock Module categories, weak {H}opf algebras and modular invariants.
\newblock {\em Transform. Groups}, 8(2):177--206, 2003.
\newblock \mathscinet{MR1976459} \arxiv{0111139}.

\bibitem[Pen71]{MR0281657}
Roger Penrose.
\newblock Applications of negative dimensional tensors.
\newblock In {\em Combinatorial {M}athematics and its {A}pplications ({P}roc.
  {C}onf., {O}xford, 1969)}, pages 221--244. Academic Press, London, 1971.

\bibitem[PP86]{MR860811}
Mihai Pimsner and Sorin Popa.
\newblock Entropy and index for subfactors.
\newblock {\em Ann. Sci. \'Ecole Norm. Sup. (4)}, 19(1):57--106, 1986.

\bibitem[RT91]{MR1091619}
Nicolai Reshetikhin and Vladimir~G. Turaev.
\newblock Invariants of {$3$}-manifolds via link polynomials and quantum
  groups.
\newblock {\em Invent. Math.}, 103(3):547--597, 1991.
\newblock \mathscinet{MR1091619} \euclid{euclid.cmp/1104180037}.

\bibitem[TW05]{MR2132671}
Imre Tuba and Hans Wenzl.
\newblock On braided tensor categories of type {$BCD$}.
\newblock {\em J. Reine Angew. Math.}, 581:31--69, 2005.
\newblock \mathscinet{MR2132671} \doi{10.1515/crll.2005.2005.581.31}
  \arxiv{math.QA/0301142}.

\bibitem[Yam03]{MR1960417}
Shigeru Yamagami.
\newblock {$C^\ast$}-tensor categories and free product bimodules.
\newblock {\em J. Funct. Anal.}, 197(2):323--346, 2003.
\newblock \mathscinet{MR1960417},\arxiv{math/9910135}.

\bibitem[Yam04]{MR2075605}
Shigeru Yamagami.
\newblock Frobenius algebras in tensor categories and bimodule extensions.
\newblock In {\em Galois theory, {H}opf algebras, and semiabelian categories},
  volume~43 of {\em Fields Inst. Commun.}, pages 551--570. Amer. Math. Soc.,
  Providence, RI, 2004.
\newblock \mathscinet{MR2075605}.

\end{thebibliography}

\end{document}